\documentclass[a4paper,11pt]{amsart}
 \usepackage{amsrefs}
\usepackage{amsmath}
\usepackage{amsthm}
\usepackage{mathtools}
\usepackage{amssymb}
\usepackage{amsfonts}
\usepackage{xcolor}
\usepackage[bookmarks=true,hyperindex,pdftex,colorlinks,citecolor=blue]{hyperref}
\usepackage{enumitem}
\usepackage{marginnote}
\usepackage{soul}
\usepackage[normalem]{ulem}

\usepackage[a4paper,lmargin=3cm,rmargin=3cm,tmargin=4cm,bmargin=4cm,marginparwidth=2.8cm,marginparsep=1mm]{geometry}

\theoremstyle{plain}
\newtheorem{theorem}{Theorem}[section]
\newtheorem*{thmA}{Theorem A}
\newtheorem*{thmB}{Theorem B}
\newtheorem*{theorem*}{Theorem}

\newtheorem{lemma}[theorem]{Lemma}
\newtheorem{corollary}[theorem]{Corollary}
\newtheorem{proposition}[theorem]{Proposition}

\theoremstyle{definition}
\newtheorem*{definition*}{Definition}
\newtheorem{definition}[theorem]{Definition}
\newtheorem{example}[theorem]{Example}

\newtheorem{assumption}[theorem]{Assumption}
\newtheorem{convention}[theorem]{Convention}
\newtheorem{claim}[theorem]{Claim}
\newtheorem*{acknowledgement*}{Acknowledgement}
\newtheorem*{Case1}{\textbf{Case 1}}
\newtheorem*{Case2}{\textbf{Case 2}}

\newtheorem*{Step1}{\textbf{Step 1}}
\newtheorem*{Step2}{\textbf{Step 2}}
\newtheorem*{Step3}{\textbf{Step 3}}
\newtheorem*{Step4}{\textbf{Step 4}}

\theoremstyle{remark}
\newtheorem{remark}[theorem]{Remark}

\numberwithin{equation}{section}

\def \R {\mathbb{R}}

\def \M {\mathcal{M}}
\def \N {\mathbb{N}}

\def \U {\mathcal{U}}

\def \F {\mathcal{F}}

\def \eps {\varepsilon}

\def \sbs {\subseteq}
\def \sps {\supseteq}

\DeclareMathOperator{\dist}{dist}
\DeclareMathOperator{\diam}{diam}
\DeclareMathOperator{\Lip}{Lip}

\DeclareMathOperator{\supp}{supp}

\DeclareMathOperator{\spn}{span}
\DeclareMathOperator{\Id}{Id}

\DeclareMathOperator{\gex}{gex}
\DeclareMathOperator{\rad}{rad}

\begin{document}

\title[$L$-embeddability of Lipschitz-free spaces]{On the $L$-embeddability of Lipschitz-free spaces  in their biduals}%

\date{\today} 

\author[Braga]{Bruno M. Braga}
\address[B. M. Braga]{IMPA, Estrada Dona Castorina 110, 22460-320, Rio de Janeiro, Brazil}
\email{demendoncabraga@gmail.com}
\urladdr{\url{https://sites.google.com/site/demendoncabraga}}
\thanks {B. M. Braga  was partially supported by FAPERJ, grant E-26/204.317/2025,  by CNPq, grant 303182/2026-1, and by Serrapilheira, grant R-2501-51476.}

\author[Gartland]{C. Gartland}
\address[C. Gartland]{Department of Mathematics and Statistics, University of
North Carolina at Charlotte, Charlotte, North Carolina, USA}
\email{cgartla1@charlotte.edu}
\urladdr{\url{https://chrisgartland.wordpress.com/}}
\thanks{C. Gartland was partially supported by NSF Award DMS-2555144}

\author[Lancien]{G. Lancien}
\address[G. Lancien]{Universit\'e Marie et Louis Pasteur, CNRS, Laboratoire de mathématiques de Besançon  (UMR 6623), F-25000 Besan\c con, France.}
\email{gilles.lancien@umlp.fr}
\urladdr{\url{https://glancien.pages.math.cnrs.fr/}}
\thanks{G. Lancien was partially supported by the French ANR project No. ANR-24-CE40-0892-01.}

\author[Motakis]{P. Motakis}
\address[P. Motakis]{Department of Mathematics and Statistics, York University, 4700 Keele Street, Toronto, Ontario, M3J 1P3, Canada}
\email{pmotakis@yorku.ca}
\urladdr{\url{https://pmotakis.mathstats.yorku.ca/}}
\thanks{P. Motakis was partially supported by NSERC Grant RGPIN-2021-03639.}

\author[Perneck\'a]{E. Perneck\'a}
\address[E. Perneck\'a]{Faculty of Information Technology, Czech Technical University in Prague, Th\'akurova 9, 160 00, Prague 6, Czech Republic}
\email{eva.pernecka@fit.cvut.cz}
\urladdr{\url{https://fit.cvut.cz/en/faculty/people/5300-eva-pernecka}}
\thanks{}

\author[Schlumprecht]{Th. Schlumprecht}
\address[Th. Schlumprecht]{Texas A\&M University, College Station, TX 77843,
USA and Faculty of Electrical Engineering, Czech Technical University in Prague, Zikova 4, 166 27, Prague}
\email{t-schlumprecht@tamu.edu}
\urladdr{\url{https://people.tamu.edu/~t-schlumprecht/}}
\thanks{Th. Schlumprecht was partially supported by NSF Award DMS-2349322}

\begin{abstract}
A Banach space $X$ is \emph{$L$-embedded in its bidual} if there is a projection $P\colon X^{**}\to X$ such that $\|x\|=\|Px\|+\|x-Px\|$ for all $x\in X^{**}$. We show that, as long as $X$ has dimension at least $2$, its Lipschitz-free space, denoted by $\F(X)$, is not $L$-embedded in its bidual. Our methods have applications to the problem of when the $L$-embeddability of $\F(M)$ passes to $\F(A)$ for a metric space $M$ and  $A\sbs M$. This is the case when $A$ is compact or when $A$ is geodesically closed.
\end{abstract}

\maketitle

\noindent\textbf{AI usage statement.}
AI was not used to develop the ideas nor to generate or verify proofs in this project. Its use was restricted to improving selected text excerpts and locating references. In particular, ChatGPT helped locate a reference establishing that reversible $n$-dimensional Finsler manifolds admit bi-Lipschitz embeddings of unit balls of $n$-dimensional normed spaces. This reference was used in obtaining Corollary~\ref{finsler.lipschitz.free.space.non.L-embeddability}.

\tableofcontents
\section{Introduction}

One of the most important open questions in the nonlinear theory of Banach spaces asks about the Lipschitz rigidity of $\ell_1$: if a Banach space is Lipschitz isomorphic to $\ell_1$, must it be linearly isomorphic to $\ell_1$? This   problem is related to complementability of Lipschitz-free spaces in their biduals, via the well known implication asserting that if the Lipschitz-free space $\F(\ell_1)$ of $\ell_1$ is complemented in $\F(\ell_1)^{**}$, then the Lipschitz rigidity question for $\ell_1$ has a positive answer (see \cite[Section 4.7]{BaudierLancienBook}).

Currently, the state of the art on the problem of complementability of free spaces is due to C\'uth, Kalenda, and Kaplick\'{y}, who showed that $\F(X)$ is always complemented in $\F(X)^{**}$ whenever $X$ is a finite-dimensional Banach space (see \cite[Theorem 1.1]{CuthKalendaKaplicky2019Israel}). Moreover, they showed that when $X$ is the finite-dimensional Euclidean space $\ell_2^n$, the free space $\F(\ell_2^n)$ is even $1$-complemented in its bidual, and hence $\F(\ell_1^n)$ is $\sqrt{n}$-complemented in $\F(\ell_1^n)^{**}$. However, even with the uniform (in $n$) complementability of $\F(\ell_2^n)$ in $\F(\ell_2^n)^{**}$, the corresponding infinite-dimensional question of whether $\F(\ell_2)$ is complemented in $\F(\ell_2)^{**}$ remains unknown at present. Since the known bounds on the complementation constants of $\F(\ell_1^n)$ in $\F(\ell_1^n)^{**}$ tend to $\infty$ as $n\to\infty$, the problem of complementability of $\F(\ell_1)$ in $\F(\ell_1)^{**}$ is seemingly even more difficult.

In this paper, we study a stronger form of complementability of a Banach space into its bidual called \emph{$L$-embeddability}. Our main result shows that this property is highly rigid: for a Banach space $X$, the free space $\F(X)$ is $L$-embedded in its bidual if and only if $\dim(X)\leq 1$. In particular, $\F(\ell_1)$ is not $L$-embedded in its bidual, and so this stronger question about complementability of $\F(\ell_1)$ in $\F(\ell_1)^{**}$ has a negative answer. A couple of comments are in order. Firstly,
this result was already known for certain Banach spaces; Proch\'azka and Rueda Zoca showed in \cite[Corollary 4.6]{ProchazkaZoca2018} that $\F(\ell_p)$ is not $L$-embedded in its bidual for any $1<p<\infty$. They achieved it by embedding into $\ell_p$, for $1<p<\infty$, an infinite bounded uniformly discrete metric space whose Lipschitz-free space does not have octahedral norm. This rules out the $L$-embeddability of $\F(\ell_p)$ in its bidual as any non-reflexive subspace of a separable space $L$-embedded in its bidual space must have octahedral norm by \cite[Proposition 3.4]{Zidi2013}. 
On the other hand, the authors proved in \cite[Proposition 4.8]{ProchazkaZoca2018} that $\ell_1$ does not contain any such metric space, and it clearly holds that no finite-dimensional Banach space can contain such a space either. Their methods therefore cannot be applied in these cases. Our work relies on studying the supports of functionals on Lipschitz spaces and their behavior under $L$-projections, which leads to a complete answer. Secondly, it is well-known that $L$-embeddability implies weak sequential completeness (see \cite[Theorem 4]{Godefroy1983AnnSciEcSup} or \cite[Lemma 4]{Godefroy1984TAMS} for an elementary proof) and, even more, it implies Pe\l czy\'nski's property (V*) (see \cite[Theorems IV.2.2  and IV.2.7]{HarmandWernerWernerBookMideals}). 
  It would therefore be natural to rule out $L$-embeddability of $\F(X)$ by demonstrating the failure of either of these properties. However, this is not possible in many cases, as we know that $\F(X)$ is weakly sequentially complete if $X$ is superreflexive (see \cite[Corollary 2.5]{AliagaNousPetitjeanProchazka2021Studia}) and that $\F(X)$ has Pe\l cz\'nski's property (V*) for all Hilbert spaces $X$ (see \cite[Theorem C]{AliagaPerneckaQuero2025ResultsMath}).  
In particular, our main result additionally implies that $L$-embeddability and Pe\l cz\'nski's property (V*) are distinct for Lipschitz-free spaces of finite-dimensional Banach spaces. Finally, let us emphasize that it is not known whether $\F(\ell_1)$ is weakly sequentially complete.

We now briefly recall $L$-embeddability and Lipschitz-free spaces. Once this is covered, we finish this introduction by describing some of our main results.

\subsection{$L$-embeddability} 
 A \emph{projection} on a Banach space $X$ is a bounded linear operator  $P\colon X\to X$  with $P^2=P$. If moreover $P$ satisfies 
    \begin{equation}\label{Defi.Lprof.intro}\|x\|=\|Px\|+\|x-Px\|,\ \text{ for all }\ x\in X,
    \end{equation}
    then $P$ is called an \emph{$L$-projection} and its image is called an \emph{$L$-summand} (of $X$). These objects were introduced in \cite{Cunningham1960TAMS} and were further investigated by Alfsen and Effros in the fundamental work \cite{AlfsenEffros1972Annals}.

A Banach space $X$ is said to be \emph{$L$-embedded in its bidual} if there exists an $L$-projection from $X^{**}$ onto $X$. Clearly, $\ell_1$ is $L$-embedded in $\ell_1^{**}$ and, more generally, so is any other $L_1$-space. Other examples include the Hardy space $H^1_0$ and the dual of the disk algebra (see \cite[Section IV.1]{HarmandWernerWernerBookMideals}). We refer to the monographs \cite{Behrends1979LectureNotesSpringer,HarmandWernerWernerBookMideals} for detailed treatments of $L$-projections.

\subsection{Lipschitz-free spaces} \label{ss:LF}
The Lipschitz-free space is a Banach space constructed over a metric space, serving as an apparatus for linearizing Lipschitz maps. Precisely, given a \emph{pointed metric space} $(M,d,0)$, i.e., a metric space $(M,d)$ with a base point $0\in M$, we let 
\[\Lip_0(M)
\coloneqq\left\{f\colon M\to \R\colon f\ \text{ is Lipschitz and }\ f(0)=0\right\}\]
and endow $\Lip_0(M)$ with the norm given by the \emph{Lipschitz constants} of its elements:
\[\Lip(f)
\coloneqq \sup\left\{\frac{|f(x)-f(y)|}{d(x,y)}\colon x,y\in M,\ x\neq y\right\}.\]
This defines the \emph{Lipschitz space} of $(M,d,0)$ and its \emph{Lipschitz-free space} is given by 
\[\F(M)=\overline{\mathrm{span}}\left\{\delta(x)\in \Lip_0(M)^*\colon x\in M\right\}\sbs \Lip_0(M)^*,\]
where each $\delta(x)$ denotes the \emph{point evaluation functional} in $\Lip_0(M)^*$ given by 
\[\langle f,\delta(x)\rangle\coloneqq  f(x)\ \text{ for all }\ f\in\Lip_0(M).\]
Throughout, we use the notation $\langle f,\varphi\rangle$ to denote 
a functional $\varphi$ acting on a vector $f$.

The realization of the members of $\mathcal{F}(M)$ as functionals depends on the choice of the base point $0$. However, as a Banach space, $\mathcal{F}(M)$ is well-defined without reference to the base point (cf. \S\ref{change of base point and subspace identification subsection} or \cite[Chapter~3]{Weaver}).
In this context, note, that if $A\sbs M$, then $\F(A)$ can be canonically identified with a closed subspace of $\F(M)$ via
\[\F(A) = \overline{\mathrm{span}}\{\delta(x)-\delta(y)\in \F(M)\colon x,y\in A\}.\]
Whenever $0\in A$, since $\delta(0)= 0$, it follows that $\F(A) = \overline{\mathrm{span}}\{\delta(x)\in \F(M)\colon x\in A\}$.

A canonical reference is the textbook of Weaver \cite{Weaver}, in which Lipschitz free spaces are referred as Arens-Eells spaces, which is another common name for these spaces (among many others). Early influential work on free spaces from the Banach space perspective is due to Godefroy-Kalton \cite{GK}. Since then, Lipschitz-free spaces have become one of the main topics in Banach space theory; for some of the rapidly growing literature in the field, see \cite{NaorSChechtman2007SIAM,Godard2010PAMS,LancienPernecka2013JFA,CuthDouchaWojtaszczyk2016PAMS,AlbiacAnsorenaCuthDoucha2021TAMS,AliagaGartlandPetitjeanProchazka2022TAMS,Smith2026PLMS}. For a survey on the topic, we refer to  \cite{Godefroy2015CommMathSurvey}.

\subsection{Our main results} \label{ss:results}
Our main result reads as follows. 

\begin{thmA}[Proved as Theorem \ref{Thm.NonLEmb.Banach.dim.2} in \S\ref{SectionnonLembBanach2}] \label{thmA}
Let $X$ be a Banach space with $\dim (X) \geq 2$. Then $\F(X)$ is not $L$-embedded in $\F(X)^{**}$.
\end{thmA}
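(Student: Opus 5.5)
We first reduce to the two-dimensional case. If $E\sbs X$ is a $2$-dimensional subspace, then $\F(E)$ sits isometrically inside $\F(X)$. $L$-embeddability does not pass to arbitrary subspaces, so instead we use the hereditary principle announced in the abstract: $L$-embeddability of $\F(M)$ passes to $\F(A)$ when $A\sbs M$ is geodesically closed. A linear subspace $E$ of a normed space is geodesically closed in the relevant sense, since it contains the segment between any two of its points. The minimal metric segments of a non-strictly convex norm may leave $E$, so this has to be checked against the paper's precise definition. If the definition is too strong, we instead restrict to a compact set such as the closed unit ball $B_E$, using the compact case, and then argue that the free space of a ball in a $2$-dimensional space is not $L$-embedded. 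Either way we are reduced to showing that $\F(B)$ or $\F(E)$ is not $L$-embedded for a $2$-dimensional normed space $E$.

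For the core two-dimensional argument, suppose $P\colon \F(E)^{**}\to\F(E)$ is an $L$-projection. Its kernel $\ker P=\F(E)^{\perp\perp}$-complement is an $L$-summand, and dually $\F(E)^\perp\sbs \F(E)^{***}$ together with $\Lip_0(E)$ give an $M$-decomposition of $\F(E)^{***}$. The standard consequence, which we use, is this: for $\varphi\in \F(E)^{**}=\Lip_0(E)^*$,
\[
\|\varphi\| = \|P\varphi\| + \|\varphi - P\varphi\|.
\]
We will then study supports of functionals in $\Lip_0(E)^*$, meaning the smallest closed subsets of the compactification, such as $\beta E$ or the uniform compactification, where they "live". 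The key lemma is that $L$-projections respect supports: the singular part $\varphi-P\varphi$ should be supported "at infinity or on infinitesimal scales", while $P\varphi$ is a genuine measure-like element.

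To get a contradiction, we take a weak$^*$ cluster point $\varphi$ of normalized molecules $m_n=(\delta(x_n)-\delta(y_n))/\|x_n-y_n\|$ with $x_n,y_n\to p$ in a fixed direction $v$. Such a $\varphi$ is a "derivative functional", and $P\varphi$ must be $0$. We then consider combinations $\varphi_1+\varphi_2$ of such derivatives along two independent directions, or a derivative averaged over a segment. In two dimensions we can compare a curve integral $\int_\gamma \partial_v$ with the element $\delta(\gamma(1))-\delta(\gamma(0))\in\F(E)$ (a fundamental theorem of calculus identity). The averaged-derivative functional $\psi=\int_0^1 \varphi_{\gamma(t)}\,dt$ then lies in $\F(E)^{**}$, and for suitable choices its image satisfies $P\psi=\delta(\gamma(1))-\delta(\gamma(0))$, because Lipschitz functions are a.e. differentiable along lines. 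At the same time, the norms fail to add. Concretely, we will aim to show that $\|\psi\|$ is strictly smaller than $\|P\psi\|+\|\psi-P\psi\|$, using a second direction to build a $1$-Lipschitz function that nearly norms both parts simultaneously. This requires dimension $2$: in $\R$ every such $\psi$ equals its $\F$ part.

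The main obstacle is this last step: making rigorous that $P$ sends the averaged-derivative functional to the free-space element, and producing a single norming Lipschitz function that separates the two pieces. Here we expect to need a careful support/localization lemma for $L$-projections: if $\varphi$ is supported away from a set then so is $P\varphi$. The existence of Lipschitz functions on $E$ with prescribed gradient on a segment but arbitrary behaviour transversally is what uses $\dim E\ge 2$.
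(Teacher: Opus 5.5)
Your ingredients (averaged directional derivatives along a segment, and non-expansion of supports under $P$) are the right ones. However, the step meant to produce the contradiction does not work as described.

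\textbf{The contradiction mechanism.} A $1$-Lipschitz $f$ that nearly norms both $P\psi$ and $\psi-P\psi$ gives $\|\psi\|\ge\langle f,\psi\rangle\approx\|P\psi\|+\|\psi-P\psi\|$. That is exactly the $L$-identity, not its failure. To contradict it you need an \emph{upper} bound on $\|\psi\|$, and before that you need to know $P\psi$.

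Your identification $P\psi=\delta(\gamma(1))-\delta(\gamma(0))$ is only justified when $\psi$ averages derivatives along the segment in its own direction. In that case $\psi$ already \emph{is} that free element (Lemma~\ref{lem:2Dembedding}(iii)), so $\psi-P\psi=0$ and there is nothing to contradict.

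The missing mechanism runs as follows. Take $x,y\in S_X$ with $\dist(y,\R x)=1$, and let $Ty$ be the average over $[0,1]x$ of derivatives in the transversal direction $y$. Support preservation gives $P(Ty)\in\F([0,1]x)$. Test against a norm-one linear functional vanishing at $x$ and equal to $\dist(y,\R x)$ at $y$. This gives $\|Ty-m\|\ge\dist(y,\R x)=1=\|Ty\|$ for every $m\in\F([0,1]x)$. Hence $\|P(Ty)\|+1\le 1$, so $P(Ty)=0$. Since $Tx=\delta(x)$ is fixed by $P$, the $L$-identity then \emph{forces} $\|ax+by\|=|a|+|b|$ (Lemma~\ref{lem:l1basis}).

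This is not yet contradictory: it holds for $e_1,e_2$ in $\ell_1^2$. So you cannot hope to show ``the norms fail to add'' for an arbitrary admissible pair. The paper closes the argument by applying the rigidity a second time to $x'=(x+y)/2$ and $y'=(x-y)/2$. These are again unit vectors with $\dist(y',\R x')=1$, but they span $\ell_\infty^2$ isometrically.

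\textbf{The support lemma.} The lemma you list as something you ``expect to need'' is the heart of the proof (Theorem~\ref{thm:complete_compact}). Your heuristic that $\varphi-P\varphi$ lives ``at infinity or on infinitesimal scales'' is not a usable statement.

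The paper represents functionals by optimal positive De Leeuw measures on $\beta\widetilde{M}$. Since $P$ is an $L$-projection, optimal measures of $P\varphi$ and $\varphi-P\varphi$ add to an optimal measure of $\varphi$, so supports cannot grow. But the norming-sequence argument only places these measures over the geodesic extension $\gex(A)$. For a segment in a non-strictly convex plane this is much larger than $A$: in $\ell_\infty^2$ the metric segment $[0,e_1]$ is a whole diamond. Then the lower bound above fails, because the functional vanishing at $x$ does not vanish on $\gex([0,1]x)$.

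The paper recovers $A$ itself as follows. It cuts $\varphi$ with a Lipschitz partition of unity into pieces supported on sets of diameter at most $4\eps$. Their geodesic extensions lie in $B(A,6\eps)$, and intersecting over $\eps$ via Theorem~\ref{Theorem.APPP.2.1} gives support in $A$.

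\textbf{Two smaller remarks.}
\begin{itemize}
\item Your reduction to a $2$-dimensional $E$ via the hereditary theorem is unnecessary. You can work in $\F(X)$ directly, since $\mathfrak{S}_Y([0,1]x)\sbs\mathfrak{S}_X([0,1]x)$, and the reduction rests on the same support lemma anyway.
\item Defining $\int_0^1\varphi_{\gamma(t)}\,dt$ from pointwise weak$^*$ cluster points of difference quotients raises measurability problems in transversal directions. The paper avoids this by mollifying, integrating $\partial_y(\phi_n*f)$ over the segment, and only then taking a weak$^*$ ultralimit.
\end{itemize}
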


Notice that, if $\dim(X)$ is either $0$ or $1$, then $\F(X)$ is $L$-embedded in $\F(X)^{**}$. Indeed, if $\dim(X)=0$ then $\F(X)=\{0\}$ and, if $\dim(X)=1$, then $\F(X)\equiv L_1(\R)$. Thus, Theorem~\ref{Thm.NonLEmb.Banach.dim.2} is in fact a biconditional statement. 

We now provide a brief overview of the methods used to prove Theorem \ref{Thm.NonLEmb.Banach.dim.2}. The proof is by contradiction. We assume that $\dim(X) \geq 2$ and that $\F(X)$ is $L$-embedded in $\F(X)^{**}$.  We eventually show that this assumption implies that for any vectors $x,y$ in the unit sphere of $X$ with $\mathrm{dist}(y,\mathbb Rx)=1$, the sequence $(x,y)$ is isometrically equivalent to the unit vector basis of $\ell_1^2$ (see Lemma \ref{lem:l1basis}). It is not hard to see that such a rigid property is impossible for all such pairs. A quantitative refinement of this rigidity argument shows that a low-distortion containment of the unit ball of a normed space of dimension at least 2 in a metric space $M$ precludes the $L$-embeddability of $\mathcal{F}(M)$ in $\F(M)^{**}$ (see Thereom \ref{low-distortion.Thm.NonLEmb.Banach.dim.2}). In particular, reversible Finsler manifolds (e.g., Riemannian manifolds) of dimension at least 2 have non-$L$-embedded Lipschitz-free spaces (Corollary \ref{finsler.lipschitz.free.space.non.L-embeddability}).

The key ingredient for proving the (impossible) rigidity property described above is Theorem~\ref{thm:complete_compact}, which states that when $M$ is a complete pointed metric space, no $L$-projection $P\colon \F(M)^{**}\to \F(M)$ expands the support of any compactly supported functional. For measures, such a statement is quite intuitive: if $\mu=\mu_1+\mu_2$, where $\mu,\mu_1,\mu_2$ are Radon measures, and if the support of $\mu_1$ is not contained in the support of $\mu$, then the exceeding part must be cancelled out by $\mu_2$, which suggests that $\mu_1, \mu_2$ are not norm-minimal and the decomposition does not form an $\ell_1$-sum. However, not all functionals in $\F(M)^{**}$ are induced by measures on $M$ (or even on a compactification of $M$), see \cite[Corollary 6.3]{AP_measures}. In order to bring measures and their supports into the picture, we use the so-called \emph{De Leeuw measure representation} of functionals in $\F(M)^{**}$. This is one of the primary tools used to prove Theorem~\ref{thm:complete_compact}.
Let us outline the main steps in the proof.

We begin by discussing the meaning of ``supports'' of functionals in $\F(M)^{**}$. Given a complete pointed metric space $(M,d,0)$ and $A\sbs M$, set 
\begin{equation}\label{Defi.MathFraksmA}
\begin{split}
\mathfrak{S}(A) \coloneqq \big\{\varphi\in \F(M)^{**}: &\;\langle f,\varphi\rangle=0\textup{ for all }f\in\Lip_{0}(M)\\&\text{ vanishing on some neighborhood of }A \big\}.
\end{split}
\end{equation}
and
\begin{equation}\label{Defi.MathFrakDA}
\begin{split}
\mathfrak{D}(A) \coloneqq \big\{\varphi\in \F(M)^{**}: &\;\langle f,\varphi\rangle=0\textup{ for all }f\in\Lip_{0}(M)\\&\text{ constant on some neighborhood of }A \big\}.
\end{split}
\end{equation}
Heuristically, $\mathfrak{S}(A)$ should be seen as the elements of $\F(M)^{**}$ which are ``supported'' in $A$, and $\mathfrak{D}(A)$ is its base-point-free version. The \emph{(extended) support} of a functional $\varphi\in \F(M)^{**}$ was defined in \cite[Definition 3.5]{AP_measures} as $\mathcal{S}(\varphi):=\bigcap\{\overline{A}^{uM}:\varphi\in\mathfrak{S}(A)\}\sbs uM$,
where $uM$ denotes the {\em uniform compactification} of $M$, also called {\em Samuel  compactification}, whose definition will be recalled in \S\ref{Subsection.Compactifications}. We will work with $\mathcal{S}(\varphi)$ only in Proposition~\ref{prop:intersection} -- it is elsewhere more convenient for us to work with a subset $A$ of $M$ and the induced sets of functionals $\mathfrak{S}(A)$ and $\mathfrak{D}(A)$. Clearly, $\mathfrak{S}(A)$ and $\mathfrak{D}(A)$ are weak$^*$-closed subspaces  of $\F(M)^{**}$ containing $\F(A\cup\{0\})$ and $\F(A)$, respectively.

We now recall the De Leeuw transform. For a given complete pointed metric space $(M,d,0)$, we write
\[\widetilde{M} \coloneqq \left\{(x,y)\in M\times M\colon\, x\neq y\right\}.\]
For $f\in \Lip_0(M)$, let $\Phi f\colon \widetilde M\to \R$ be the continuous bounded function defined by 
\[\Phi f(x,y) \coloneqq \frac{f(x)-f(y)}{d(x,y)}\ \text{ for all }\ (x,y)\in \widetilde M.\]
We identify   $\Phi f $ with its (unique) extension to the Stone--\v{C}ech compactification $\beta \widetilde M$. Under this identification, the \emph{De Leeuw transform} is the linear isometric map 
\begin{align*}\Phi:\Lip_0(M)&\to C(\beta\widetilde{M})\qquad
f\mapsto \Phi f.
\end{align*} 

We let $\mathcal M(\beta \widetilde M)$ denote the space of Radon measures on $\beta \widetilde M$ endowed with its total variation norm.\footnote{Here a Radon measure will always mean a finite regular signed Borel measure.} We identify $\mathcal M(\beta \widetilde M)$ with the dual $C(\beta \widetilde M)^*$ in the usual way: elements in $\mathcal M(\beta \widetilde M)$ act on  $C(\beta \widetilde M)$ via integration. Since the adjoint $\Phi^*$ of the De Leeuw transform is surjective, every $\varphi\in \F(M)^{**}$ can be represented as $\Phi^*(\mu)$ for some $\mu\in \M(\beta\tilde M)$, but more can be said:
Define for $\varphi\in\F(M)^{**}$ the set of {\it optimal representations} by
\begin{equation} \label{DefiMopVarphi}
\mathcal{M}_{op}(\varphi) \coloneqq \left\{\mu\!\in\! \mathcal{M}(\beta\widetilde{M})\colon\, \mu\!\geq\!0, \|\mu\|\!=\!\|\varphi\|, \langle f,\varphi\rangle=\int_{\beta\widetilde{M}}\Phi f \,d\mu, \forall f\in\Lip_0(M)\right\}.
\end{equation}
By \cite[Proposition 3]{Aliaga}, the set $\mathcal{M}_{op}(\varphi)$ is non-empty for every $\varphi\in\F(M)^{**}$. Thus, every $\varphi\in \F(M)^{**}$ can be represented as
$\Phi^*(\mu)$ for some positive $\mu\in \M(\beta\tilde M)$ with $\|\varphi\|=\|\mu\|$.
A measure in $\mathcal{M}_{op}(\varphi)$ should be seen as a positive coding for the functional $\varphi$.
 
 Our study focuses on the relation between the support of a functional $\varphi\in \F(M)^{**}$ and the support of its representations $\mu\in \mathcal{M}_{op}(\varphi)$.
The main technical result in this direction is Theorem \ref{prop:shadow_almost_norm_attaining}. It states that, for closed $A\sbs M$ and for certain ``nice'' functionals  $\varphi\in \mathfrak{D}(A)$ (of which there are sufficiently many), the support of any $\mu\in\M_{op}(\varphi)$ is ``contained'' in the geodesic extension of $A$. For a compact set $A\sbs M$, the {\it geodesic extension} is the set  
\[\left\{z\in M\colon \exists x,y\in A\ \text{ with }\ d(x,y)=d(x,z)+d(z,y)\right\}\]
(see \S\ref{Subsection.Gex} for the definition for a general subset $A$). The two considered supports -- that of a functional and that of its De Leeuw representing measure -- are subsets of different spaces. Therefore, we consider coordinate projections from $\beta \widetilde M$ to the uniform compactification $uM$ of $M$. This is an appropriate compactification to work within, as Lipschitz functions separate closed subsets of $uM$ (see \S\ref{Subsection.Compactifications} for details). 

On the other hand, it is known by \cite[Lemma 8]{Aliaga} that the support of a weak$^*$ continuous functional is contained in the support of its representing measure. This, together with Theorem \ref{prop:shadow_almost_norm_attaining}, allows us to move the argument to the setting of De Leeuw representing measures. The main advantage is that the non-expansion of supports under $L$-projections can be deduced easily in the case of representing measures (see Proposition \ref{prop:support_preservation_deLeeuw}). Combining all the above, we readily get a weaker version of the support-preservation result: $L$-projections map elements supported in $A$ to elements supported in the geodesic extension of $A$ (see Corollary \ref{cor:L_projection_shadow}).

Lastly, if $A$ is a subset that equals its geodesic extension, i.e., $A$ is \emph{geodesically closed}, then $L$-projections map elements supported in $A$ to elements supported in $A$ (see Theorem \ref{thm:L_projection_supports}). However, geodesic closedness often fails to hold; even compact line segments like $[0,1]x \sbs \ell_1$ may fail to be geodesically closed. This challenge is overcome in Theorem~\ref{thm:complete_compact}, where we decompose a functional into ones subordinated to a sufficiently fine cover of its compact support.

The path to prove Theorem \ref{Thm.NonLEmb.Banach.dim.2} described above has other consequences of potential interest. In particular, the methods are especially suitable for proving certain hereditary results on $L$-embeddability beyond the Banach space scenario. These findings are summarized in the next theorem.

\begin{thmB}[Proved as Theorems \ref{Thm.LembPassesAcompact} and \ref{thm:L_embed_hereditary_geodesic} in \S\ref{SectionLProjHer}]
Let $(M,d,0)$ be a complete pointed metric space and let $A\sbs M$. Let $P:\mathcal{F} (M)^{**}\to \mathcal{F}(M)$ be an $L$-projection. If $A$ is compact or if $A$ is geodesically closed, then $P(\mathcal{F}(A)^{**}) \sbs \mathcal{F}(A)$. 

In particular, under either of these conditions, if $\mathcal{F}(M)$ is $L$-embedded in $\F(M)^{**}$, then $\mathcal{F}(A)$ is $L$-embedded in $\F(A)^{**}$.
\end{thmB}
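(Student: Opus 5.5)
The plan is to reduce both cases of Theorem B to a support-preservation statement for $P$, and then to identify the elements of $\F(M)$ supported in $A$ with $\F(A)$. First I normalize. Since $\F(A)$ does not depend on the base point, I may move the base point into $A$, so I assume $0\in A$. Replacing $A$ by $\overline{A}$ changes neither $\F(A)$ nor compactness. A geodesically closed set should already be closed, and I will check this against the definition of the geodesic extension in \S\ref{Subsection.Gex}. So $A$ is closed. By McShane extension, $\F(A)^\perp$ consists of the $f\in\Lip_0(M)$ vanishing on $A$. Hence $\F(A)^{**}\equiv\F(A)^{\perp\perp}$ is a weak$^*$-closed subspace of $\F(M)^{**}$ contained in $\mathfrak{S}(A)$. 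Since $\F(A)$ is norm closed, Hahn--Banach gives $\F(A)^{\perp\perp}\cap\F(M)=\F(A)$. It therefore suffices to show that $P\varphi\in\F(A)^{\perp\perp}$ for every $\varphi\in\F(A)^{\perp\perp}$.

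\emph{Step 1 (from $\mathfrak{S}$ to $\F(A)$).} Suppose $\gamma\in\F(M)$ is annihilated by all $f$ vanishing on some neighborhood of the closed set $A$. Then $\gamma\in\F(A)$: this is the known intersection property of supports of elements of $\F(M)$ when $M$ is complete. Hence it is enough to prove $P\varphi\in\mathfrak{S}(A)$. For compact $A$ I would use $\mathfrak{S}(A_\eps)$, where $A_\eps$ is the $\eps$-neighborhood of $A$, and then let $\eps\to0$. Here compactness gives $\bigcap_\eps\overline{A_\eps}=A$, and neighborhoods of $A$ are cofinal among the $A_\eps$.

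\emph{Step 2 (geodesically closed case).} Let $\varphi\in\F(A)^{\perp\perp}\sbs\mathfrak{D}(A)$. I first treat the ``nice'' functionals of Theorem~\ref{prop:shadow_almost_norm_attaining}, for which every $\mu\in\M_{op}(\varphi)$ has coordinate projections to $uM$ supported in $\overline{\gex(A)}^{uM}=\overline{A}^{uM}$. Write $\varphi=P\varphi+(\varphi-P\varphi)$ and take $\mu_1\in\M_{op}(P\varphi)$ and $\mu_2\in\M_{op}(\varphi-P\varphi)$. The $L$-identity $\|\varphi\|=\|P\varphi\|+\|\varphi-P\varphi\|$ gives $\mu_1+\mu_2\in\M_{op}(\varphi)$. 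Positivity then forces $\supp\mu_1\sbs\supp(\mu_1+\mu_2)$, which is the content of Proposition~\ref{prop:support_preservation_deLeeuw}. Since $P\varphi\in\F(M)$ is weak$^*$ continuous, \cite[Lemma 8]{Aliaga} puts its support inside the support of $\mu_1$, so $P\varphi\in\mathfrak{S}(A)$. This is Theorem~\ref{thm:L_projection_supports}. To pass to a general $\varphi$, I use that the nice functionals are weak$^*$-dense, or norm-dense in the relevant sense, in $\F(A)^{\perp\perp}$. If only norm density is available, the norm continuity of $P$ and the norm closedness of $\mathfrak{S}(A)\cap\F(M)$ finish the argument. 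If only weak$^*$ density is available, I will instead need the density to be taken in norm inside $\mathfrak{D}(A)$.

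\emph{Step 3 (compact case), which I expect to be the main obstacle.} Segments, and compact sets generally, need not be geodesically closed, so Step 2 only yields $P\varphi\in\mathfrak{S}(\gex A)$, which is Corollary~\ref{cor:L_projection_shadow}. This is far too weak on its own. My first attempt follows Theorem~\ref{thm:complete_compact}. Fix $\eps>0$ and cover $A$ by finitely many balls $B_i$ of radius $\eps$. Using Lipschitz partitions of unity, or the bounded weighting operators $f\mapsto hf$ adjointed to $\F(M)^{**}$, decompose $\varphi=\sum_i\varphi_i$ with $\varphi_i\in\mathfrak{D}(\overline{B_i}\cap A)$, modulo an error living near the base point that has to be absorbed. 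Apply Step 2's argument to each $\varphi_i$. The geodesic extension of a set of diameter at most $2\eps$ lies in its $2\eps$-neighborhood, so $P\varphi_i\in\mathfrak{S}(A_{3\eps})$ for each $i$. By linearity $P\varphi\in\mathfrak{S}(A_{3\eps})$, and letting $\eps\to0$ concludes via Step 1.

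The delicate points are twofold. The pieces $\varphi_i$ must lie in the $\mathfrak{D}$-classes to which Theorem~\ref{prop:shadow_almost_norm_attaining} applies, which is a base-point issue handled through $\mathfrak{D}$ rather than $\mathfrak{S}$. The multiplication operators must also be controlled uniformly in $\eps$, including through the density approximation, since this is exactly where the $L$-structure is used piece by piece. Once $P(\F(A)^{**})\sbs\F(A)$ is established, the restriction of $P$ to $\F(A)^{**}$ is a projection onto $\F(A)$ that still satisfies the $L$-identity. This gives the final assertion of Theorem B.
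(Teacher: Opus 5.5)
Your compact case follows the paper's route (Theorem~\ref{thm:complete_compact} combined with $\F(A)^{**}\sbs\mathfrak{D}(A)$), and it is simpler than you expect. Each piece lies in $\mathfrak{D}(A_i)$ with $A_i$ compact, so Assumption~\ref{Assumption_Pavlos} holds by Proposition~\ref{prop:sufficient_for_assumption}. The piece avoids infinity because $A_i$ is bounded. Hence Lemma~\ref{lem:proper_compact_gex} applies to it directly. No density step and no uniformity in $\eps$ is needed, since for each fixed $\eps$ the decomposition is exact. (Also, Theorem~\ref{prop:shadow_almost_norm_attaining} yields $\gex^u(A)$, not $\overline{\gex(A)}^{uM}$. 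This is harmless, because you only use $\gex^u(A)\cap M=\gex(A)$.)

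The genuine gap is in Step~2 when the geodesically closed set $A$ is unbounded. The functionals covered by Theorem~\ref{prop:shadow_almost_norm_attaining} must avoid infinity, and these are not norm-dense in $\F(A)^{**}$:
\begin{itemize}
\item Since $\varphi\mapsto\varphi_M$ is linear, Proposition~\ref{Prop.AP_measures.Cor.3.4} gives $\|\varphi-\psi\|\geq\|\varphi_\infty\|$ whenever $\psi$ avoids infinity.
\item $\F(A)^{**}$ contains nonzero functionals concentrated at infinity, e.g.\ $\varphi=w^*\text{-}\lim_{n,\U}\delta(y_n)/d(y_n,0)$ with $y_n\in A$ and $d(y_n,0)\to\infty$.
\item Weak$^*$ density is of no use, because $P$ is not $w^*$-continuous.
\item The support argument itself fails for such $\varphi$: for every $x\in M$, the Dirac mass at $\lim_{n,\U}(y_n,x)\in\beta\widetilde M$ belongs to $\M_{op}(\varphi)$. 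So optimal representations need not localize near $A$.
\end{itemize}

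The missing idea is the paper's separate treatment of the part at infinity. Since $P$ takes values in $\F(M)$, $P\varphi$ avoids infinity, so $\|P\varphi-W^*_{\daleth_n}(P\varphi)\|$ is small for large $n$. Lemma~\ref{lem:infinity_norming} provides $h\in B_{\Lip_0(M)}$ almost norming $\varphi$ with $h\daleth_n=0$. The $L$-identity then gives $\|P\varphi\|<\langle h\daleth_n,P\varphi\rangle+\eps=\eps$, i.e.\ $P\varphi=0$ for every $\varphi$ concentrated at infinity.

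Only after this does a density argument close the proof:
\begin{itemize}
\item By Bishop--Phelps, the norm-attaining elements of $\F(A)^{**}$ are dense.
\item They remain norm-attaining in $\F(M)^{**}$ by Proposition~\ref{canonical identification}.
\item For such $\varphi$, the $\ell_1$-decomposition forces $\varphi_M$ to be norm-attaining as well.
\item Since $\varphi_M\in\mathfrak{S}(A)$ and avoids infinity, your Step~2 applies to it, while $P\varphi_\infty=0$.
\end{itemize}
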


\section{Preliminaries}
In this preliminary section, we review some concepts and terminology needed for the article, in addition to proving a few basic statements (Lemma~\ref{lem:gap function vs approximate intervals}, Proposition~\ref{prop:decomposition_S(A)} and Proposition~\ref{prop:sufficient_for_assumption}) used later.

We start by fixing some simple notation. Since we work with Lipschitz-free spaces, we will consider  {complete} \emph{pointed metric spaces}, i.e.,  {complete} metric spaces $(M,d)$ with a distinguished base point, which we typically denote by $0\in M$. We write $(M,d,0)$ for a pointed metric space. For a given $A\sbs M$ and $\varepsilon>0$,  we write \[B(A,\varepsilon)\coloneqq\{x\in M: d(x, A)< \varepsilon\}.\] For simplicity, if $A$ is a singleton, say $A=\{y\}$, we simply write   $B(y,\varepsilon)$ instead of $B(\{y\},\varepsilon)$.

\subsection{Compactifications}\label{Subsection.Compactifications}
Throughout these notes, we will make heavy use of two compactifications for metric spaces: the \emph{Stone--\v{C}ech compactification} and the  {\emph{uniform compactification}, also known as the \emph{Samuel compactification}}. We now quickly recall their definitions and main properties -- we  restrict our exposition to metric spaces since this will be our setting.

Given a metric space $(M,d)$, we denote its \emph{Stone--\v{C}ech compactification} by $\beta M$. The compactification $\beta M$ is a compact Hausdorff space containing  $M$ homeomorphically as a dense subset and it is characterized by the following extension property: if $K$ is a compact Hausdorff space and $f\colon M\to K$ is a continuous function, then there exists a unique continuous extension  of $f$ to the whole $\beta M$. We refer the reader to \cite[Chapter 5]{MunkresBookTopology} for a detailed treatment of the subject.

Given a metric space $(M,d)$, we denote its  {\emph{uniform compactification}, also known as its \emph{Samuel compactification}, by $uM$}. The compactification $uM$ is a compact Hausdorff space containing $M$ homeomorphically as a dense subset and it is characterized by the following properties (see \cite[Corollary 2.4]{Woods}):
\begin{enumerate}
\item Every bounded uniformly continuous function $f\colon M\to \R$ can be uniquely extended to   a continuous function $uM\to\R$.
\item Given  $A,B\sbs M$, their closures  in $uM$ are disjoint if and only if $d(A,B)>0$.
\end{enumerate}
We refer the reader to \cite{Woods} for the basics of  {uniform} compactification and to  \cite{Weaver,AP_measures} for its use in the context of Lipschitz-free spaces. A concrete way to define $uM$ (up to homeomorphisms) is to let $U_b(M)$ be the space of bounded, uniformly continuous $\R$-valued functions on $M$, and then to define $uM$ as the closure of the range of 
the map 
$$\iota: M\to \R^{U_b(M)},\quad x\mapsto (f(x))_{f\in U_b(M)}.$$
Using this definition, we can easily deduce the following two simple properties satisfied by this compactification, which we will tacitly use throughout these notes (see, e.g., \cite[Propositions 2.8 and 2.9]{AP_measures}).

\begin{proposition} Let $(M,d)$ be a metric space.
    \begin{enumerate}
\item Every Lipschitz function $f\colon M\to \R$ can be uniquely extended to   a continuous function   $uM\to [-\infty,\infty]$.
\item Given disjoint closed subsets $A,B\sbs uM$, there exist open sets $U,V\sbs uM$ containing $A$ and $B$, respectively, such that $d(U\cap M,V\cap M)>0$.
\end{enumerate}\label{Lemma.AP_measures.sM.Basics}
\end{proposition}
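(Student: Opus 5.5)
Both items follow from the concrete model $uM=\overline{\iota(M)}\sbs \R^{U_b(M)}$ together with the two characterizing properties (1) and (2) of $uM$ recalled just before the statement. The idea is to reduce everything to bounded uniformly continuous functions, where those properties apply directly.

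For item (1), let $f\colon M\to\R$ be Lipschitz and compose it with a homeomorphism $h\colon\R\to(-1,1)$, for instance $h(t)=\tanh t$.
\begin{itemize}
\item Since $h$ is $1$-Lipschitz and bounded, $h\circ f$ is bounded and uniformly continuous on $M$.
\item By property (1) of $uM$, $h\circ f$ extends uniquely to a continuous $g\colon uM\to[-1,1]$.
\item Extend $h$ to the homeomorphism $\bar h\colon[-\infty,\infty]\to[-1,1]$ and set $\bar f\coloneqq\bar h^{-1}\circ g$. This is continuous into $[-\infty,\infty]$ and agrees with $f$ on $M$.
\item Uniqueness holds because $M$ is dense in $uM$ and $[-\infty,\infty]$ is Hausdorff.
\end{itemize}

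For item (2), let $A,B\sbs uM$ be disjoint and closed. Since $uM$ is compact Hausdorff, hence normal, choose open sets $U,V$ with $A\sbs U$, $B\sbs V$ and $\overline U\cap\overline V=\emptyset$, where closures are taken in $uM$. Because $M$ is dense in $uM$ and $U$ is open, we have $\overline{U\cap M}=\overline U$, and likewise $\overline{V\cap M}=\overline V$. So the closures in $uM$ of $U\cap M$ and $V\cap M$ are disjoint, and property (2) of $uM$ gives $d(U\cap M,V\cap M)>0$.

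The only step needing care is $\overline{U\cap M}=\overline U$. It holds because each point of $U$ has every neighbourhood $W$ meeting $U\cap M$: the set $W\cap U$ is a nonempty open set, and dense $M$ meets it. Nothing else is a real obstacle, since both items are routine once properties (1) and (2) of $uM$ are assumed.
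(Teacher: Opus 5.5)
Your proof is correct. For item (1), composing with $\tanh$ puts $f$ into $U_b(M)$, so property (1) of $uM$ applies; you use implicitly that the extension takes values in $[-1,1]$, which follows from density. For item (2), normality of $uM$ together with $\overline{U\cap M}=\overline{U}$ reduces everything to the characterization of disjoint closures by positive distance.

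The paper gives no proof of its own here. It only remarks that both items follow from the concrete model $\overline{\iota(M)}$ and cites \cite[Propositions 2.8 and 2.9]{AP_measures}. Your derivation from the two characterizing properties it recalls is essentially the routine argument it has in mind.
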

 
\begin{convention}\label{Convention1} Hereinafter, any real-valued Lipschitz function $f$ defined on a metric space $M$ will be considered to be continuously extended to $uM$, and we still denote this extension by $f$. We emphasize that extensions may no longer be real-valued but instead take values in the extended real line $[-\infty,\infty]$.
\end{convention}

We finish this subsection by singling out an open and dense subset  of $uM$ which will play an important role herein. Precisely, we define, after extending the map $d(0,\cdot): M\to \R$ to $uM$, 
\[rM \coloneqq \left\{\xi\in uM\,\colon\, d(\xi,0)<\infty\right\}.\] 
Clearly, 
\[M\sbs rM\sbs uM.\]

\begin{example} \label{Example.srM.proper}
We have that $rM=M$ if and only if the metric space $M$ is proper, and $rM=uM$ if and only if $M$ is bounded. 
\end{example}
 
\subsection{Geodesic extensions}
\label{Subsection.Gex}
Let $(M,d)$ be a metric space, $x,y\in M$, and $\eps>0$. We write  
\[[x,y]_{\varepsilon} \coloneqq \left\{z\in M\colon\,d(x,y)>d(x,z)+d(z,y)-\varepsilon\right\}.\] 
So, $[x,y]_\eps$ should be seen as an ``$\eps$-approximation'' of the \emph{metric segment}
\[[x,y] \coloneqq \left\{z\in M\colon\,d(x,y)=d(x,z)+d(z,y)\right\}.\]
Given a subset 
$A\sbs M$, we define 
\[\gex_{\varepsilon}(A) \coloneqq \bigcup\left\{[x,y]_{\varepsilon}\colon\,x,y\in A\right\}.\]

\begin{definition}\label{Defi.Geo.Close}
Given a metric space  {$(M,d)$} and  $A\sbs M$, the \emph{geodesic extension of $A$} is defined as 
\[\gex(A) \coloneqq \bigcap_{\varepsilon>0}\overline{\gex_{\varepsilon}(A)}\]
and we say that $A$ is \emph{geodesically closed} if $A=\gex(A)$.
\end{definition}

Clearly, $\gex(A)$ is a closed set and, when $A$ is  compact, we have that
\begin{equation}
    \label{eq:gex_compact}
    \gex(A)=\bigcup\{[x,y]\colon\,x,y\in A\}.
\end{equation}
We will also need a larger version of $\gex(A)$ where the closures in its definition are taken in the  {uniform} compactification. Precisely, given a metric space $M$ and $A\sbs M$, we let 
\[\gex^{u}(A) \coloneqq \bigcap_{\varepsilon>0}\overline{\gex_{\varepsilon}(A)}^{uM}.\] It is easy to verify that \[\gex(A)=\gex^{u}(A)\cap M.\]

\subsection{McShane extensions and the McShane gap function} \label{SubsectionMcShane}
It is well known that real-valued Lipschitz functions have Lipschitz extensions which do not increase their Lipschitz constants. They are however by no means unique. McShane extensions provide the largest and the smallest possible; 
we review these constructions in this subsection. We shall also need to quantify the gap between these extensions and relate them to geodesic extensions.

\begin{definition}
    Let $(M,d)$ be a metric space, $A\sbs B\sbs M$ with $A\neq \emptyset$, and $f\colon B\to \R$ be a $1$-Lipschitz function. 
    \begin{enumerate}
        \item The \emph{McShane sup-convolution} $\underline{f}_{A}\colon\, M\to\R$ is the extension of $f\vert_A\colon A\to \R$ to $M$ given by 
\[\underline{f}_{A}(x) \coloneqq \sup\left\{f(y)-d(x,y)\colon y\in A\right\}\]
for all $x\in M$.
   \item The \emph{McShane inf-convolution} $\overline{f}_{A}\colon\, M\to\R$ is the extension of $f\vert_A\colon A\to \R$ to $M$ given by 
\[\overline{f}_{A}(x) \coloneqq \inf\left\{f(y)+d(x,y)\colon y\in A\right\}\]
for all $x\in M$.
    \end{enumerate}
\end{definition}

The McShane convolutions are $1$-Lipschitz, extend the restriction of $f$ to $A$, and satisfy $\underline{f}_A\leq\overline{f}_A$ on $M$. Hence, for any $\alpha\in [0,1]$, the function  $\alpha \underline{f}_{A}+(1-\alpha)\overline{f}_{A}:M\to\R$  also coincides with $f$ on $A$ and is $1$-Lipschitz. Moreover, if $g\colon M\to \R$ is a $1$-Lipschitz extension of $f\vert_A$, then 
\[\underline{f}_A\leq g\leq \overline{f}_A.\]
Notice also that, if $\varepsilon>0$ is such that $ {\overline{B(A,\varepsilon)}}\sbs B$, then
\begin{equation}
\label{eq:extensions_from_neighbourhoods}
\sup_{x\in M}\left|\underline{f}_{A}(x)-\underline{f}_{ {\overline{B(A,\varepsilon)}}}(x)\right|\leq 2\varepsilon\quad\quad\textup{and}\quad\quad\sup_{x\in M}\left|\overline{f}_{A}(x)-\overline{f}_{ {\overline{B(A,\varepsilon)}}}(x)\right|\leq 2\varepsilon.
\end{equation}

\begin{definition}\label{Defi.Gap.Function.ga.}
Given a metric space  {$(M,d)$} and $A\sbs M$, the \emph{McShane gap function} $g_A:M\to[0,\infty)$ is defined by
\[g_A(x) \coloneqq  \inf\left\{\overline f_A(x) - \underline f_A(x):f:A\to\mathbb{R}\text{ is }1\text{-Lipschitz}\right\}\]
for all $x\in M$. 
\end{definition}

Clearly, the  McShane gap function $g_A$ is zero on $A$. We should also notice that $g_A$ is always $2$-Lipschitz (see \cite[Proposition 1.23]{Weaver}).

The following lemma is a ``global'' version of \cite[Lemma 5.1]{APS1} for the  {uniform} compactification $uM$. 
\begin{lemma}
\label{lem:gap function vs approximate intervals}
Let $(M,d)$ be a  metric space and $A$ be a non-empty subset of $M$.
Then 
\[ \gex^{u}(A)=\left\{\xi\in uM : g_A(\xi)=0\right\} .\]
Moreover, if $M$ is proper and $A$ is bounded, then
\[\gex(A)=\left\{\xi\in uM: g_A(\xi)=0\right\} .\]
\end{lemma}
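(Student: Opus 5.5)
The plan is to first identify the McShane gap function on $M$ with an explicit ``geodesic defect'' and then pass to $uM$ by continuity. Concretely, I would prove that for every $x\in M$,
\[
g_A(x)=h_A(x)\coloneqq\inf\left\{d(x,a)+d(x,b)-d(a,b)\colon a,b\in A\right\}.
\]

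For the inequality $g_A\ge h_A$, fix a $1$-Lipschitz $f\colon A\to\R$ and expand the definitions:
\[
\overline f_A(x)-\underline f_A(x)=\inf_{a,b\in A}\bigl(f(a)-f(b)+d(x,a)+d(x,b)\bigr).
\]
Since $f(a)-f(b)\ge -d(a,b)$, this quantity is $\ge h_A(x)$. Taking the infimum over $f$ gives $g_A(x)\ge h_A(x)$.

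For the inequality $g_A\le h_A$, fix $a_0,b_0\in A$ and test with $f=d(\cdot,b_0)|_A$. Then
\[
\overline f_A(x)\le f(b_0)+d(x,b_0)=d(x,b_0)
\quad\text{and}\quad
\underline f_A(x)\ge f(a_0)-d(x,a_0)=d(a_0,b_0)-d(x,a_0).
\]
Hence $g_A(x)\le d(x,a_0)+d(x,b_0)-d(a_0,b_0)$. Taking the infimum over $a_0,b_0\in A$ gives $g_A(x)\le h_A(x)$. Comparing with the definition of $[x,y]_\eps$, this shows
\[
\gex_\eps(A)=\{x\in M\colon g_A(x)<\eps\}\quad\text{for every }\eps>0.
\]

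Next I pass to $uM$. Since $g_A$ is $2$-Lipschitz, Proposition~\ref{Lemma.AP_measures.sM.Basics} and Convention~\ref{Convention1} give a continuous extension $g_A\colon uM\to[0,\infty]$. I then prove the two inclusions.

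For $\gex^u(A)\subseteq\{g_A=0\}$: the set $\{\xi\in uM\colon g_A(\xi)\le\eps\}$ is closed and contains $\gex_\eps(A)$. So it contains $\overline{\gex_\eps(A)}^{uM}$. Intersecting over all $\eps>0$ puts $\gex^u(A)$ inside $\{g_A=0\}$.

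For $\{g_A=0\}\subseteq\gex^u(A)$: suppose $g_A(\xi)=0$ and fix $\eps>0$. Then $U=\{g_A<\eps\}$ is an open neighbourhood of $\xi$ in $uM$. By density of $M$ in $uM$, every neighbourhood of $\xi$ inside $U$ meets $M\cap U=\gex_\eps(A)$. Thus $\xi\in\overline{\gex_\eps(A)}^{uM}$ for every $\eps>0$, i.e.\ $\xi\in\gex^u(A)$.

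For the ``moreover'' part, assume $M$ is proper and $A$ is bounded. Since $\gex(A)=\gex^u(A)\cap M$ and the first part gives $\gex^u(A)=\{g_A=0\}$, it suffices to show $\{g_A=0\}\subseteq M$. The lower bound
\[
d(x,a)+d(x,b)-d(a,b)\ge 2d(x,A)-\diam A
\]
shows that $\gex_1(A)=\{x\in M\colon g_A(x)<1\}$ is bounded. Properness then makes its closure $K$ in $M$ compact. A compact subset of $M$ is closed in the Hausdorff space $uM$, so $\overline{\gex_1(A)}^{uM}=K\subseteq M$. Hence $\{g_A=0\}=\gex^u(A)\subseteq K\subseteq M$, as required.

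The only real content is the upper bound $g_A\le h_A$, which hinges on choosing the right test function $d(\cdot,b_0)$ with the correct sign. The remaining steps are formal consequences of continuity of $g_A$ on $uM$ and density of $M$.
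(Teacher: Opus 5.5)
Your proof is correct and follows essentially the paper's route. The paper's Claim establishes $\gex_\eps(A)=\{x\in M: g_A(x)<\eps\}$ using the same test function $d(y,\cdot)$ and the same expansion of $\overline f_A-\underline f_A$, then passes to $uM$ by continuity of $g_A$ and density of $M$, and handles the proper case via relative compactness of the bounded sets $\gex_\eps(A)$; your explicit formula $g_A(x)=\inf_{a,b\in A}\bigl(d(x,a)+d(x,b)-d(a,b)\bigr)$ is just a slightly cleaner packaging of that Claim.
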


\begin{proof}
We begin by proving a preparatory claim. 

\begin{claim}  \label{eq:gap_epsilon}
Given $\eps>0$, we have $\gex_{\varepsilon}(A)=\left\{x\in M: g_A(x)<\varepsilon\right\}$.
\end{claim}

\begin{proof}
Let $x\in \gex_{\varepsilon}(A)$ and let us show that $g_A(x)<\eps$. For that, fix $y,z\in A$ and $0<\varepsilon'<\varepsilon$ such that 
\begin{equation}\label{Eq.gap function vs approximate interval.1}
d(y,z)> d(x,y)+d(x,z) - \varepsilon' 
\end{equation} 
and consider the $1$-Lipschitz function $f:A\to\mathbb{R}$ given by \[f(w) = d(y,w)\ \text{ for all }\  w\in A.\] Let $h:M\to\mathbb{R}$ be an arbitrary $1$-Lipschitz extension of $f$. As $y\in A$, $h(y)=f(y)=0$ and we have  
\begin{equation}\label{Eq.gap function vs approximate interval.2}h(x)=h(x)-h(y)\leq d(x,y).\end{equation}

Let us notice that \begin{align}\label{Eq.gap function vs approximate interval.3}d(y,x)-\varepsilon' < h(x).\end{align} Suppose towards a contradiction this fails. Then, using \eqref{Eq.gap function vs approximate interval.1}, we have \begin{align*} 
h(z)&= h(z) - h(x)+h(x) \\
&\leq d(z,x)+ d(y,x) -\varepsilon'  \notag\\
&< d(y,z).\notag
\end{align*}
Since $z\in A$, $h(z)=d(y,z)$ and this gives us a contradiction. By \eqref{Eq.gap function vs approximate interval.2} and \eqref{Eq.gap function vs approximate interval.3},   $h(x)$ is in  $(d(x,y)-\varepsilon',d(x,y)]$. Since $h$ was an arbitrary extension of  $f$, we have \[g_A(x)\leq \overline{f}_A(x) - \underline{f}_A(x)\leq \varepsilon'<\varepsilon.\]

Suppose now  $x\in M$ is such that   $g_A(x) < \varepsilon$. Then, there exists a $1$-Lipschitz function $f:A\to\mathbb{R}$ such that
\[
\overline{f}_A(x) - \underline{f}_A(x) < \varepsilon.
\]
We may thus find $y,z\in A$ for which
\[f(z)-f(y) +d(x,z)+d(x,y)< \varepsilon,\] and therefore,
\[d(x,z)+d(x,y)< f(y)-f(z)+\varepsilon \leq d(y,z)+\varepsilon.\]
In other words, $x\in\gex_{\varepsilon}(A)$.
\end{proof}

We now prove the lemma. Firstly notice that, as $g_A$ is continuous on $uM$ and as $M$ is dense in $uM$, 
\[\{\xi\in uM : g_A(\xi)=0\}= \bigcap_{\varepsilon>0} \{\xi\in uM : g_A(\xi)<\varepsilon\}= \bigcap_{\varepsilon>0} \overline{\{x\in M: g_A(x)<\varepsilon\}}^{uM}.\]
Hence, by Claim~\ref{eq:gap_epsilon},   
\[\{\xi\in uM: g_A(\xi)=0\}=\bigcap_{\varepsilon>0} \overline{\gex_{\varepsilon}(A)}^{uM}=\gex^{u}(A).\]

At last, for the last statement, notice that if $A$ is bounded, then so is $\gex_{\eps}(A)$ for every $\eps>0$. Hence, assuming $M$ to be proper,  $\gex_{\eps}(A)$ is relatively compact and we have 
\[\gex(A)=\bigcap_{\eps>0}\overline{\gex_{\eps}(A)}=\bigcap_{\eps>0}\overline{\gex_{\eps}(A)}^{uM}=\{\xi\in uM: g_A(\xi)=0\}\] by the above.
\end{proof}

\subsection{Decomposition of functionals in $\F(M)^{**}$}
Let $(M,d,0)$ be a pointed metric space  and  $h:M\to \R$ be Lipschitz with   bounded support. This induces a weighting operator 
\begin{align}\label{Eq.Wh.formula}
   W_h\colon \Lip_0(M)&\to\Lip_0(M)\\ f&\mapsto fh .\notag
\end{align}
By \cite[Lemma 2.3]{APPP}  {(where $W_h$ is denoted $T_h$)}, this is a $w^*$-to-$w^*$ continuous linear operator with \[\|W_h\|\leq\|h\|_\infty+\rad(\supp(h))\Lip(h),\]
where for a given $A\sbs M$ we consider
\[\mathrm{rad}(A) \coloneqq \sup\{d(x,0)\colon x\in A\}\]
and
\[\supp(h):=\overline{\{x\in M: h(x)\neq 0\}}.\]
Under the canonical isomorphism between $\Lip_0(M)$ and $\F(M)^*$, we see the adjoint of $W_h$ as an operator 
\[W^*_h\colon \F(M)^{**}\to \F(M)^{**}.\]

For each $n\in\N$, let $\daleth_n:M\to\R$ be given by
\begin{equation}\label{eq:daleth}
\daleth_n(x)\coloneqq \begin{cases}
1 &\text{if } d(x,0) \leq 2^n, \\
2-2^{-n}d(x,0) &\text{if } 2^n \leq d(x,0) \leq 2^{n+1}, \\
0 &\text{if } 2^{n+1} \leq d(x,0) .
\end{cases}
\end{equation}
The following was introduced in \cite[Definition 3.1]{AP_measures}.

\begin{definition}
    Let $(M,d,0)$ be a complete pointed metric space and $(\daleth_n)_n$ be as above. Let $\varphi\in \F(M)^{**}$.
    \begin{enumerate}
        \item We say that $\varphi$ \emph{avoids infinity} if \[\varphi=\lim_{n\to\infty}W_{\daleth_n}^*(\varphi).\]
      \item We say that $\varphi$ is \emph{concentrated at infinity} if \[W_{\daleth_n}^*(\varphi)=0\ \text{   for every }\  n\in \N.\]
    \end{enumerate}
\end{definition}
 
  \begin{example}
       Every functional in $\F(M)$ avoids infinity (see \cite[Pages 18 and 19]{AP_measures}).
  \end{example}

  Notice that if $\varphi\in \F(M)^{**}$ is arbitrary, then $(W_{\daleth_n}^*(\varphi))_n$ is Cauchy (see \cite[Section 3.1]{AP_measures}). Therefore, this allows us to define 
\begin{equation}\label{Eq.Def.varphi_M}\varphi_M \coloneqq \lim_{n\to\infty}W_{\daleth_n}^*(\varphi)\ \text{  and }\  \varphi_{\infty} \coloneqq \varphi-\varphi_M.
  \end{equation}
Clearly, $\varphi_M$ avoids infinity and $\varphi_\infty$  
is concentrated at infinity. So, each functional in $\F(M)^{**}$ can be decomposed into a part avoiding infinity and one concentrated at it. Moreover, this decomposes $\F(M)^{**}$ into an $\ell_1$-sum. Precisely: 

\begin{proposition}
   \emph{(}\cite[Corollary 3.4]{AP_measures}\emph{).}
 Let $(M,d,0)$ be a complete pointed metric space and $\varphi\in \F(M)^{**}$.
 Then 
 \[\|\varphi\|=\|\varphi_M\|+\|\varphi_\infty\|.\]\label{Prop.AP_measures.Cor.3.4}
\end{proposition}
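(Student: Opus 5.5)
The inequality $\|\varphi\|\le\|\varphi_M\|+\|\varphi_\infty\|$ is just the triangle inequality, so the plan is to prove the reverse inequality by a gluing argument. Fix $\eps>0$ and choose $f,g\in\Lip_0(M)$ with $\Lip(f),\Lip(g)\le 1$ such that $\langle f,\varphi_M\rangle>\|\varphi_M\|-\eps$ and $\langle g,\varphi_\infty\rangle>\|\varphi_\infty\|-\eps$. The goal is to build a single function $h$ with $\Lip(h)\le 1+\eps$ and $\langle h,\varphi\rangle\ge \langle f,\varphi_M\rangle+\langle g,\varphi_\infty\rangle-O(\eps)$. Roughly, $h$ should equal $f$ on a large ball and $g$ far out.

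The gluing is the delicate step, and I expect it to be the main obstacle. The naive choice $f\daleth_n+g(1-\daleth_n)$ does not work: on the annulus we have $|f(x)-g(x)|\le 2d(x,0)$ while $\Lip(\daleth_n)=2^{-n}$, so the Lipschitz constant can increase by a fixed amount (up to $2$), which is not small. Instead I would use a logarithmically slow cutoff. Fix $k\in\N$ and $n\in\N$, and let $\psi(x)=\theta(\log_2 d(x,0))$, where $\theta$ equals $1$ on $(-\infty,n]$, equals $0$ on $[n+k,\infty)$, and is linear in between. On the annulus $2^n\le d(x,0)\le 2^{n+k}$ one has $|\nabla\psi|(x)\lesssim 1/(k\,d(x,0))$. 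Set
\[
h=\psi f+(1-\psi)g .
\]
Since $h-g=\psi(f-g)$ and $|f-g|(x)\le 2d(x,0)$, a direct estimate gives $\Lip(h)\le 1+C/k$. Taking $k$ large makes this at most $1+\eps$. The same estimate shows that $\|W_\psi\|\le\|\psi\|_\infty+\rad(\supp\psi)\Lip(\psi)$ is bounded independently of $n$; more precisely, I would estimate $\Lip(\psi u)$ directly for $u\in\Lip_0(M)$.

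It remains to evaluate the four pairings.
\begin{enumerate}
\item Since $\psi$ has bounded support, $\psi\daleth_N=\psi$ for $N$ large. Because $\varphi_\infty$ is concentrated at infinity,
\[
\langle \psi f,\varphi_\infty\rangle=\langle \psi f\daleth_N,\varphi_\infty\rangle=\langle \psi f,W^*_{\daleth_N}\varphi_\infty\rangle=0 .
\]
Hence also $\langle (1-\psi)g,\varphi_\infty\rangle=\langle g,\varphi_\infty\rangle$.
\item Since $\varphi_M$ avoids infinity, choose $n$ with $\|\varphi_M-W^*_{\daleth_n}\varphi_M\|<\eps$. Since $\daleth_n$ is supported in the ball of radius $2^{n+1}$, we have $(1-\psi)\daleth_n=0$ (after shifting the cutoff's starting scale to $n+1$). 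This gives $|\langle (1-\psi)g,\varphi_M\rangle|\le \|(1-\psi)g\|\,\eps$, which is $O(\eps)$ because $\Lip((1-\psi)g)\le 2+C/k$.
\item Similarly $\psi\daleth_n=\daleth_n$, so $\langle \psi f,\varphi_M\rangle$ is within $O(\eps)$ of $\langle \psi f\daleth_n,\varphi_M\rangle=\langle f\daleth_n,\varphi_M\rangle$, which in turn is within $O(\eps)$ of $\langle f,\varphi_M\rangle$. This step uses the uniform bound on $\|W_\psi\|$.
\end{enumerate}

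Combining these, and using that $n$ is chosen first (depending on $\varphi_M$) and $k$ only afterwards, we get
\[
(1+\eps)\|\varphi\|\ \ge\ \langle h,\varphi\rangle\ \ge\ \|\varphi_M\|+\|\varphi_\infty\|-O(\eps).
\]
Letting $\eps\to0$ proves the proposition.
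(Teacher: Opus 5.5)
The paper does not prove this statement; it imports it from \cite[Corollary 3.4]{AP_measures}. Your argument is a correct, self-contained replacement for that citation. It uses only the norm convergence of $(W^*_{\daleth_n}\varphi)_n$, which is how $\varphi_M$ is defined. It also uses the facts that $\varphi_M$ avoids infinity and $\varphi_\infty$ is concentrated at infinity, both immediate from $\daleth_N\daleth_n=\daleth_N$ for $n>N$.

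Two points in your sketch need care when you write it out. First, the Lipschitz bound must come from the two-point identity
\[
h(x)-h(y)=\psi(x)\bigl(f(x)-f(y)\bigr)+\bigl(1-\psi(x)\bigr)\bigl(g(x)-g(y)\bigr)+\bigl(\psi(x)-\psi(y)\bigr)(f-g)(y),
\]
applied with $0<d(y,0)\le d(x,0)$ (the case $y=0$ is trivial). Combine it with
\[
|\psi(x)-\psi(y)|\le\tfrac1k\log_2\tfrac{d(x,0)}{d(y,0)}\le\tfrac{d(x,0)-d(y,0)}{k\ln 2\,d(y,0)} .
\]
This gives $\Lip(h)\le 1+\frac{2}{k\ln 2}$ in an arbitrary metric space. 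A pointwise gradient bound does not integrate to a Lipschitz bound in a non-geodesic space, and bounding by $\Lip(g)+\Lip(\psi(f-g))$ is not small.

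Second, the same identity with $g=0$ or $f=0$ gives $\Lip(\psi u),\Lip((1-\psi)u)\le(1+\frac{1}{k\ln 2})\Lip(u)$. This is the bound you need in your pairing steps (2)--(3). The generic estimate $\|\psi\|_\infty+\rad(\supp\psi)\Lip(\psi)$ is of order $2^k/k$, which is incompatible with choosing $n$ before $k$.

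As for the route, the logarithmic cutoff does more than is needed. $W^*_{\daleth_n}\varphi_M$ only sees a function on $B(0,2^{n+1})$, and $\varphi_\infty$ only sees it off bounded sets, so the values on the transition annulus are irrelevant and you can glue with a gap. Put $h_0=f$ on $\overline{B(0,2^{n+1})}$ and $h_0=g$ on $M\setminus B(0,R)$. From $|f(x)-g(y)|\le d(x,0)+d(y,0)$ and $d(x,y)\ge d(y,0)-2^{n+1}$ you get $\Lip(h_0)\le\frac{R+2^{n+1}}{R-2^{n+1}}$. A McShane extension of $h_0$ then satisfies the same pairing estimates as your $h$. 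This is exactly the device the paper uses in the proof of Lemma~\ref{lem:infinity_norming}, and it avoids all cutoff estimates. Your version instead yields an explicit nearly contractive weighting operator ($\|W_\psi\|\le 1+\frac{1}{k\ln 2}$, uniformly in $n$). That is a useful by-product but is not needed for the norm identity.
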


\subsection{Change of base point and subspace identification}\label{change of base point and subspace identification subsection}

As a Banach space, the Lipschitz free space of a metric space $M$ does not depend on the choice of the base point. However, the standard definition of the support of a functional $\varphi\in\mathcal{F}(M)^{**}$ depends on the base point. Our proofs rely essentially on handling notions of support of functionals and, on the other hand, necessitate changing the base point in certain instances. We formalize the change of base point maps and record, among other information, how to canonically identify, for an arbitrary $A\sbs M$, the space $\mathcal{F}(A)$ with a subspace of $\F(M)$.

\begin{definition}
\label{notation change of base point}
Let $(M,d,0)$ be pointed metric space and let $\mathrm{Lip}(M)$ denote the set of all Lipschitz functions $f:M\to \R$. Consider the base point selection mapping
\begin{align*}
T_0:\Lip(M)&\to\Lip_0(M)\text{ given by } f\mapsto f-f(0)\mathbf{1}_M,
\end{align*} 
where $\mathbf{1}_M$ denotes the function on $M$ constantly equal to $1$.
\end{definition}

The items in the following statement are straightforward.

\begin{proposition}
\label{change of base point proposition}
Consider a pointed metric space $(M,d,0)$ and $a\in M$. Let $\F_a(M)$ denote the Lipschitz free space of $(M,d,a)$. The following hold.
\begin{enumerate}
    
    \item\label{change of base point proposition i} The operator \[T_{0,a} = T_0|_{\mathrm{Lip}_a(M)}:\mathrm{Lip}_a(M)\to \mathrm{Lip}_0(M)\]
    is an onto linear isometry with inverse $T_{a,0}:\mathrm{Lip}_0(M)\to\mathrm{Lip}_a(M)$. Therefore,
    \[T_{0,a}^*:\F(M)^{**}\to \F_a(M)^{**}\]
    is an onto linear isometry with inverse $T_{a,0}^*:\F(M)^{**}\to \F_a(M)^{**}$.
    
    \item\label{change of base point proposition ii} For all $x,y\in M$,
    \[T_{0,a}^*(\delta(x) - \delta(y))=\delta(x)-\delta(y).\]
    In particular, $T_{0,a}^*(\F(M)) = \F_a(M)$, and $T_{0,a}$ is a $w^*$-to-$w^*$ homeomorphism.
    
\end{enumerate}
\end{proposition}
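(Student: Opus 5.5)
Both items follow directly from the definitions; the plan is to verify them in order.

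For item (\ref{change of base point proposition i}), we first note that for $f\in\Lip_a(M)$ the function $T_{0,a}f=f-f(0)\mathbf{1}_M$ vanishes at $0$. It has the same difference quotients as $f$, so $\Lip(T_{0,a}f)=\Lip(f)$. Hence $T_{0,a}$ is a linear isometry into $\Lip_0(M)$. Symmetrically, $T_{a,0}g=g-g(a)\mathbf{1}_M$ maps $\Lip_0(M)$ isometrically into $\Lip_a(M)$. To see that these maps are mutually inverse, take $f\in\Lip_a(M)$. Then
\[T_{a,0}T_{0,a}f=f-f(0)\mathbf{1}_M-\bigl(f(a)-f(0)\bigr)\mathbf{1}_M=f-f(a)\mathbf{1}_M=f,\]
because $f(a)=0$, and the other composition is handled the same way. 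So $T_{0,a}$ is an onto isometry. Since $\Lip_0(M)=\F(M)^*$ and $\Lip_a(M)=\F_a(M)^*$, its adjoint is an onto isometry between $\F(M)^{**}$ and $\F_a(M)^{**}$, with inverse $T_{a,0}^*$. As written, the adjoint actually runs from $\Lip_0(M)^{*}$ to $\Lip_a(M)^*$, so in the write-up I would fix the direction so that the domains match.

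For item (\ref{change of base point proposition ii}), I would compute the adjoint against an arbitrary $f\in\Lip_a(M)$:
\[\langle f,T_{0,a}^*(\delta(x)-\delta(y))\rangle=(T_{0,a}f)(x)-(T_{0,a}f)(y)=f(x)-f(y),\]
since the constant $f(0)$ cancels. This is exactly the action of $\delta(x)-\delta(y)$ on $\Lip_a(M)$. The elements $\delta(x)=\delta(x)-\delta(0)$ span a dense subspace of $\F(M)$, and the elements $\delta(x)-\delta(a)$ span a dense subspace of $\F_a(M)$. The isometry $T_{0,a}^*$ therefore maps a dense subspace of $\F(M)$ onto a dense subspace of $\F_a(M)$, and taking closures gives $T_{0,a}^*(\F(M))=\F_a(M)$.

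Finally, $w^*$-to-$w^*$ continuity holds for any adjoint operator. Because $T_{a,0}^*$ is also an adjoint and is the inverse, $T_{0,a}^*$ is a $w^*$-homeomorphism; read literally, the statement says $T_{0,a}$, and the intended object is presumably the adjoint. I do not expect any real obstacle. The only points needing care are matching the directions of the adjoints and checking that the dense spanning sets correspond under $T_{0,a}^*$.
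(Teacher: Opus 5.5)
Your verification of item (i), and of the first two claims of item (ii), is correct. It is exactly the routine check the paper has in mind; the paper gives no proof and calls the items straightforward.

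\textbf{The gap is in the last claim.} You treat ``$T_{0,a}$ is a $w^*$-to-$w^*$ homeomorphism'' as a typo for $T_{0,a}^*$, and then prove only that the adjoint is weak$^*$-continuous. That is automatic for any adjoint and would not need the words ``in particular''.

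The statement is meant literally. It says that $T_{0,a}\colon\Lip_a(M)\to\Lip_0(M)$ is continuous for the weak$^*$ topologies $\sigma(\Lip_a(M),\F_a(M))$ and $\sigma(\Lip_0(M),\F(M))$. This is the same sense in which the paper calls $W_h$ and $R$ (Proposition~\ref{canonical identification}) $w^*$-to-$w^*$ continuous. A bounded operator between dual spaces need not be weak$^*$-continuous. It is here precisely because of the identity $T_{0,a}^*(\F(M))=\F_a(M)$ that you just proved, which is why the claim is phrased as a consequence.

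To close the gap, let $S\coloneqq T_{0,a}^*|_{\F(M)}\colon\F(M)\to\F_a(M)$. For $f\in\Lip_a(M)$ and $m\in\F(M)$ you have $\langle S^*f,m\rangle=\langle f,T_{0,a}^*m\rangle=\langle T_{0,a}f,m\rangle$. So $T_{0,a}=S^*$ is an adjoint and hence weak$^*$-continuous. Symmetrically, $T_{a,0}$ is the adjoint of $T_{a,0}^*|_{\F_a(M)}\colon\F_a(M)\to\F(M)$, so the inverse is weak$^*$-continuous as well. Alternatively, on bounded sets both weak$^*$ topologies coincide with pointwise convergence, which $f\mapsto f-f(0)\mathbf{1}_M$ clearly preserves, and Banach--Dieudonn\'e finishes the argument.

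A smaller point: your remark about directions is off target. The stated direction $T_{0,a}^*\colon\F(M)^{**}\to\F_a(M)^{**}$ is already correct, since $\Lip_0(M)^*=\F(M)^{**}$ and $\Lip_a(M)^*=\F_a(M)^{**}$. The actual slip in the statement is the codomain of the inverse, which should read $T_{a,0}^*\colon\F_a(M)^{**}\to\F(M)^{**}$.
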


\begin{remark}
\label{change of base point remarks}
Let $(M,d,0)$ be a pointed metric space.
\begin{enumerate}

\item\label{change of base point remarks 0} By Proposition \ref{change of base point proposition} \eqref{change of base point proposition ii}, $\mathcal{F}(M)$ is isometric to $\mathcal{F}_a(M)$, for any $a\in M$. Therefore, as a Banach space, $\mathcal{F}(M)$ is well-defined without reference to a base point.

\item\label{change of base point remarks ii} The De Leeuw transform is invariant under change of base points, that is, for all $a\in M$, $\Phi = \Phi T_{0,a}$. In particular, for all $\varphi\in\F(M)^{**}$, $ \mathcal{M}_{op}(\varphi) = \mathcal{M}_{op}(T^*_{0,a}\varphi)$ (see \eqref{DefiMopVarphi} in the introduction the definition of $\mathcal{M}_{op}(\varphi)$.

\item\label{change of base point remarks iii} By the above item and \cite[Corollary 2.12]{APS2}, $\varphi\in\F(M)^{**}$ avoids infinity if and only if $T_{0,a}^*\varphi\in\F_a(M)^{**}$ does.

\end{enumerate}
\end{remark}

\begin{definition}
For a pointed metric space $(M,d,0)$, $A\sbs M$, and $a\in A$, we define the change-of-base-point-plus-restriction map
\begin{align*}
    R:\Lip_0(M)\to \Lip_a(A)\text{ given by }f\mapsto (T_{a,0}f)\vert_A.
\end{align*}
\end{definition}

\begin{proposition}
\label{canonical identification}
Let $(M,d,0)$ be a pointed metric space, $A\sbs M$, and $a\in A$.
\begin{enumerate}
    
    \item The operator $R:\Lip_0(M)\to \Lip_a(A)$ maps the closed unit ball of $\Lip_0(M)$ onto the closed unit ball of $\Lip_a(A)$. In particular, the operator $R^*:\F_a(A)^{**}\to \F(M)^{**}$ is an isometric embedding preserving the norm-attainment of functionals.
    
    \item For all $x,y\in A$, \[R^*(\delta(x)-\delta(y)) = \delta(x)-\delta(y).\]
    In particular, $R^*(\F_a(A)) = \overline{\mathrm{span}}\{\delta(x)-\delta(y):x,y\in A\}\sbs \F(M)$, and $R$ is w$^*$-to-w$^*$ continuous.

\end{enumerate}
\end{proposition}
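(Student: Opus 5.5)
The plan is to prove the two items in order, using only McShane extensions and elementary duality.

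\emph{Item (1).} First I check that $R$ is well defined and contractive. For $f\in\Lip_0(M)$, the function $T_{a,0}f=f-f(a)\mathbf{1}_M$ vanishes at $a$, and its restriction to $A$ has Lipschitz constant at most $\Lip(f)$. So $R$ maps the unit ball of $\Lip_0(M)$ into the unit ball of $\Lip_a(A)$.

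For the reverse inclusion, take $g\in\Lip_a(A)$ with $\Lip(g)\leq 1$. Extend $g$ to a $1$-Lipschitz function $\tilde g\colon M\to\R$, for instance the McShane inf-convolution $\overline{g}_A$. Then set $f\coloneqq T_0\tilde g=\tilde g-\tilde g(0)\mathbf{1}_M\in\Lip_0(M)$, which still has $\Lip(f)\leq 1$. Since $f(a)=\tilde g(a)-\tilde g(0)=-\tilde g(0)$, we get $T_{a,0}f=f-f(a)\mathbf{1}_M=\tilde g$, and hence $Rf=\tilde g|_A=g$.

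Thus $R(B_{\Lip_0(M)})=B_{\Lip_a(A)}$. By the standard duality fact, $\|R^*\psi\|=\sup_{f\in B}\langle Rf,\psi\rangle=\|\psi\|$ for every $\psi\in\Lip_a(A)^*=\F_a(A)^{**}$, so $R^*$ is an isometric embedding.

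For norm attainment, suppose $\psi$ attains its norm at some $g$ in the unit ball. Pick $f$ in the unit ball with $Rf=g$. Then $\langle f,R^*\psi\rangle=\langle g,\psi\rangle=\|\psi\|=\|R^*\psi\|$, so $R^*\psi$ attains its norm at $f$.

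\emph{Item (2).} For $x,y\in A$ and $f\in\Lip_0(M)$ we compute
\[
\langle f,R^*(\delta(x)-\delta(y))\rangle=(f(x)-f(a))-(f(y)-f(a))=f(x)-f(y),
\]
which is the identity $R^*(\delta(x)-\delta(y))=\delta(x)-\delta(y)$.

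Since $a\in A$, we have $\F_a(A)=\overline{\spn}\{\delta(x)-\delta(y):x,y\in A\}$, because $\delta(a)=0$ there. The map $R^*$ is a linear isometry, hence continuous and with closed range on closed subspaces. Therefore $R^*(\F_a(A))$ equals the closed span of $\{\delta(x)-\delta(y): x,y\in A\}$ inside $\F(M)$.

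For $w^*$-to-$w^*$ continuity, it suffices, by linearity and boundedness, to check this on bounded nets. On bounded sets of $\Lip$-spaces the weak$^*$ topology coincides with pointwise convergence.

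So let $f_i\to f$ weak$^*$ in $\Lip_0(M)$ with $(f_i)$ bounded. Then $f_i\to f$ pointwise, so $f_i-f_i(a)\to f-f(a)$ pointwise on $A$, and the net $(Rf_i)$ is bounded. Hence $Rf_i\to Rf$ weak$^*$. The Banach--Dieudonné/Krein--Šmulian theorem then upgrades this to weak$^*$ continuity of $R$.

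Alternatively, it is enough to note that $R$ is the adjoint of the preadjoint map $\F_a(A)\to\F(M)$ given by $R^*|_{\F_a(A)}$, computed above.

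\emph{Main obstacle.} The only non-formal point is the surjectivity onto the unit ball in item (1). It relies on the norm-preserving McShane extension together with the observation that the change of base point does not alter Lipschitz constants.
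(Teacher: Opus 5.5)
Your proof is correct and follows the same route as the paper. You extend $g\in B_{\Lip_a(A)}$ to $\Lip_a(M)$ by a norm-preserving McShane extension and move it to $\Lip_0(M)$ by the base-point change, which gives $R(B_{\Lip_0(M)})=B_{\Lip_a(A)}$; item (2) is then the direct evaluation $\langle Rf,\delta(x)-\delta(y)\rangle=f(x)-f(y)$. Your duality details (isometry and norm attainment for $R^*$, and weak$^*$ continuity via $R=(R^*|_{\F_a(A)})^*$) fill in what the paper leaves as straightforward.
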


\begin{proof}
   For the first item, we need that, using McShane's extension, Lipschitz functions $\in\mathrm{Lip}_a(A)$ have extensions to $\mathrm{Lip}_a(M)$ of the same norm, which are then mapped via $T_{0,a}$ to $\mathrm{Lip}_0(M)$. The second item is straightforward.
\end{proof}

\begin{remark}
Henceforth, we identify $\F(A)^{**}$ with the subspace  $R^*(\F(A)^{**}) $ of $\F(M)^{**}$ and, correspondingly, $\mathcal{F}(A)$ with the subspace $\overline{\mathrm{span}}\{\delta(x)-\delta(y):x,y\in A\}$ of $\mathcal{F}(M)$.
\end{remark}

\subsection{Functionals in $\F(M)^{**}$ supported in $A\sbs M$} 

It is of fundamental importance for us to understand when elements of $\F(M)^{**}$ are in $\F(M)$.  We gather two results of \cite{APPP} below which allow us to decide this for elements in $\mathfrak{S}(A)$ and which will also be used repeatedly below.

\begin{theorem}\emph{(}\cite[Theorem 2.1]{APPP}\emph{).}
Let $(M,d,0)$ be a complete pointed metric space and $(K_i)_{i\in I}$ be a family of closed subsets of $M$. Then, 
    \[\F\left(\bigcap_{i\in I}K_i\cup\{0\}\right)=\bigcap_{i\in I}\F\left( K_i\cup\{0\}\right).\]\label{Theorem.APPP.2.1}
\end{theorem}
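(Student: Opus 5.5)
We prove the intersection theorem by a two-sided inclusion, reducing the substantive direction to the characterization of $\F(K\cup\{0\})$ inside $\F(M)$ by annihilators. Write $K:=\bigcap_{i\in I}K_i$. The inclusion $\F(K\cup\{0\})\sbs\bigcap_i\F(K_i\cup\{0\})$ is immediate, since $K\cup\{0\}\sbs K_i\cup\{0\}$ for every $i$ and the canonical identifications of Proposition~\ref{canonical identification} are compatible with inclusions of subsets. So the work is the reverse inclusion. For a closed set $C\ni 0$, the space $\F(C)$ is the norm-closed span of $\{\delta(x):x\in C\}$. By Hahn--Banach it therefore equals the pre-annihilator of
\[
I_C:=\{f\in\Lip_0(M): f|_C=0\}.
\]
In other words, $\mu\in\F(M)$ lies in $\F(C)$ if and only if $\langle f,\mu\rangle=0$ for all Lipschitz $f$ vanishing on $C$. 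The reverse inclusion thus amounts to the following: if $\mu$ annihilates every $I_{K_i\cup\{0\}}$, then $\mu$ annihilates every $f$ vanishing on $K\cup\{0\}$.

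The key structural input I would aim for is that, for $\mu\in\F(M)$, one has $\mu\in\F(C)$ whenever $\mu$ annihilates all $f\in\Lip_0(M)$ vanishing on a neighborhood of $C$. This is a support statement with open neighborhoods. Assuming it, take $f$ vanishing on $K\cup\{0\}$. I would first reduce to bounded-support $f$ via the weights $W_{\daleth_n}$: elements of $\F(M)$ avoid infinity, and $W_{\daleth_n}$ preserves vanishing sets. Next, I would approximate $f$ in the weak$^*$ sense by functions vanishing on a neighborhood of $K\cup\{0\}$, namely $f_\eps:=\operatorname{sign}(f)\,(|f|-\eps)^+$. These are uniformly Lipschitz and converge to $f$ pointwise, which is weak$^*$ convergence on bounded sets, so it suffices to treat $f$ vanishing on an open set $U\supseteq K\cup\{0\}$.

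The main obstacle is then to split such an $f$ into finitely or countably many pieces, each vanishing on some $K_i$, with norm control. Since the intersection is arbitrary, no compactness is available to pick finitely many $i$ with $\bigcap K_i\sbs U$. To handle this, I would instead use the support of $\mu$, in the sense of the smallest closed set $S$ with $\mu\in\F(S\cup\{0\})$, whose existence would itself follow from the same intersection principle. This makes the argument circular, so I would prove the existence of the support directly, via a partition of unity $\{h_j\}$ subordinate to a locally finite cover, together with the weighting operators $W_{h_j}$. The goal is to show that if $\mu$ annihilates functions vanishing near each point of a closed set $C$, then $\mu$ annihilates functions vanishing on $C$. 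Granting that $\supp\mu$ exists, the hypothesis gives $\supp\mu\sbs K_i\cup\{0\}$ for all $i$, hence $\supp\mu\sbs K\cup\{0\}$, and therefore $\mu\in\F(K\cup\{0\})$.

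The delicate step is the locality argument. It requires controlling $\sum_j W_{h_j}^*\mu$ in norm using $\|W_h\|\le\|h\|_\infty+\rad(\supp h)\Lip(h)$, on dyadic annuli around $0$ and with Whitney-type partitions whose Lipschitz constants scale like the inverse of the distance to $C$. Completeness of $M$ is used so that points in the closure of the limit support actually belong to $M$.
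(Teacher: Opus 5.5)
The paper does not prove this statement; it imports it from \cite{APPP}. Judged on its own, your proposal has a genuine gap.

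Your preliminary reductions are correct, but they are the routine part. The Hahn--Banach description of $\F(C)$ as the pre-annihilator of $\{f\in\Lip_0(M): f|_C=0\}$, the cut-off by $W_{\daleth_n}$, and the thresholding $f_\eps$ (uniformly Lipschitz and pointwise convergent, hence weak$^*$ convergent) do reduce matters to $f$ vanishing on a neighbourhood of $K\cup\{0\}$. The whole content of the theorem is the passage from ``$\mu$ kills every function vanishing on some $K_i$'' to ``$\mu$ kills every function vanishing on $\bigcap_i K_i$'', and you never carry it out. The existence of a smallest closed $S$ with $\mu\in\F(S\cup\{0\})$ is a restatement of the theorem. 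So is the local-to-global principle you would really need: if every $x\notin C$ has a neighbourhood $U_x$ with $\langle g,\mu\rangle=0$ whenever $g$ vanishes off $U_x$, then $\mu\in\F(C)$. Your ``locality'' goal as literally phrased (functions vanishing near each point of $C$ versus functions vanishing on $C$) is only the thresholding step again. For the real principle you give only a sketch, and it cannot work as described.

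The obstruction is that, a priori, $\mu$ is only known to vanish near $x\notin K\cup\{0\}$ at the scale $\rho(x)=\sup_i d(x,K_i)$. This can be arbitrarily small on a bounded set far from $K$. In $\ell_2$ take orthonormal vectors $u_n,v_{n,j}$, and put $p_n=5u_n$, $q_{n,j}=p_n+\frac1n v_{n,j}$, $K_i=\{0\}\cup\{q_{n,j}: n\ge1,\ j\ge i\}$. These sets are closed and decreasing with $\bigcap_iK_i=\{0\}$, yet $d(p_n,K_i)=1/n$ for every $i$.

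A Whitney partition adapted to $d(\cdot,K\cup\{0\})$ lives at scale $\approx 5$ near $p_n$, so there is no reason for $W^*_{h_j}\mu$ to vanish. Suppose instead each piece is known to be annihilated, i.e.\ each $gh_j$ vanishes on some $K_{i(j)}$. Then every finite partial sum $s$ of $\sum_j gh_j$ vanishes on $K_{\max i(j)}$, whence $|s(p_n)|\le \Lip(s)/n$. Now let $g$ be a thresholding of $f=\sup_n(1-\|\cdot-p_n\|)^+$, which vanishes on $B(0,4)$ and equals $1$ at each $p_n$. The partial sums cannot stay bounded in $\Lip_0(M)$ while converging to $g$, so you cannot pass from $\langle gh_j,\mu\rangle=0$ to $\langle g,\mu\rangle=0$. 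Controlling $\sum_jW^*_{h_j}\mu$ in norm is beside the point, since each term is $0$. What fails is weak$^*$ convergence of the multipliers, and the bound $\|W_h\|\le\|h\|_\infty+\rad(\supp h)\Lip(h)$ only gets worse as $\Lip(h_j)$ grows.

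The same computation shows that the splitting you single out as the main obstacle is impossible in general. The function $f$ is not a pointwise limit of any bounded net of finite sums $\sum_{i\in J}g_i$ with $g_i|_{K_i}=0$. Yet, by the theorem, it lies in the weak$^*$ closure of $\sum_i\F(K_i)^\perp$. Some genuinely global (or iterated) input is therefore unavoidable.

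One route that works goes as follows. For bounded $M$, pointwise multiplication makes $\Lip_0(M)$ an algebra with separately weak$^*$-continuous product. Then $J=\overline{\sum_i\F(K_i\cup\{0\})^\perp}^{w^*}$ is a weak$^*$-closed ideal whose common zero set is $K\cup\{0\}$. Weaver's hull--kernel theorem for weak$^*$-closed ideals of $\Lip_0(M)$ gives $J=\F(K\cup\{0\})^\perp$. The weighting operators $W_{\daleth_n}$ then transfer this from bounded pieces to general $M$; that transfer is the part of your plan that is sound.
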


Theorem \ref{Theorem.APPP.2.1} allows us to give a coherent definition for the support of an element of $\F(M)$. Precisely, for $m\in \F(M)$, we let 
\[\supp(m)\coloneqq\bigcap\left\{K\sbs M: K\text{ is non-empty and closed and } m\in \F(K\cup\{0\})\right\}.\]
So, $m\in \F(\supp(m)\cup\{0\})$. In fact, $\supp(m)$ is the smallest non-empty closed subset $K$ of $M$ such that $m\in\F(K\cup\{0\})$. 

Here is the other result of \cite{APPP} which we will need.

\begin{proposition}\emph{(}\cite[Proposition 2.6]{APPP}\emph{).}
Let $(M,d,0)$ be a complete pointed metric space, $m\in \F(M)$,  and $K\sbs M$ be closed. Then  {the following are equivalent:
\begin{enumerate}
\item $m\in \F(K\cup\{0\})$,
\item $\supp(m)\sbs K$,
\item $\langle f,m\rangle=0$ whenever $f\in\Lip_0(M)$ satisfies $f\vert_K=0$.
\end{enumerate}}
\label{prop.APPP.2.6}
\end{proposition}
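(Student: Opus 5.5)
The three conditions are linked by a duality argument between (1) and (3) and by the definition of $\supp(m)$ for (1) and (2). I will prove (1)$\Leftrightarrow$(3) by duality and (1)$\Leftrightarrow$(2) from the definition together with Theorem~\ref{Theorem.APPP.2.1}.

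\emph{(1)$\Leftrightarrow$(3).} Set $L=K\cup\{0\}$. By definition, $\F(L)$ is the norm-closed linear span of $\{\delta(x)\colon x\in L\}$ inside $\F(M)$.

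For (1)$\Rightarrow$(3), take $f\in\Lip_0(M)$ with $f|_K=0$. Then $f$ vanishes on all of $L$, since $f(0)=0$ automatically. Hence $\langle f,\delta(x)\rangle=0$ for every $x\in L$. By linearity and norm continuity of $f$ as a functional on $\F(M)$, it follows that $\langle f,m\rangle=0$ for every $m\in\F(L)$.

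For (3)$\Rightarrow$(1), I use that $\Lip_0(M)=\F(M)^*$. The annihilator of $\F(L)$ in $\Lip_0(M)$ is exactly $\{f\in\Lip_0(M)\colon f|_L=0\}=\{f\in\Lip_0(M)\colon f|_K=0\}$. A closed subspace equals its pre-annihilated annihilator, by Hahn--Banach (the bipolar theorem). So any $m$ annihilated by all such $f$ lies in $\F(L)$.

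\emph{(1)$\Leftrightarrow$(2).} For (1)$\Rightarrow$(2), suppose $K$ is non-empty and $m\in\F(K\cup\{0\})$. Then $K$ is one of the sets in the intersection defining $\supp(m)$, so $\supp(m)\sbs K$.

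For (2)$\Rightarrow$(1), apply Theorem~\ref{Theorem.APPP.2.1} to the family of all non-empty closed $K'$ with $m\in\F(K'\cup\{0\})$. This gives $m\in\F(\supp(m)\cup\{0\})$, as noted after that theorem. If $\supp(m)\sbs K$, the inclusion $\supp(m)\cup\{0\}\sbs K\cup\{0\}$ gives $\F(\supp(m)\cup\{0\})\sbs\F(K\cup\{0\})$, and (1) follows.

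\emph{Degenerate cases.} The only point needing care is when $K$ or the family defining $\supp(m)$ is empty or degenerate. If $K=\emptyset$, condition (1) reads $m\in\F(\{0\})=\{0\}$, and condition (3) also forces $m=0$, because then every $f\in\Lip_0(M)$ qualifies. In that case I would check directly that $\supp(0)$ is consistent with the convention in the definition of $\supp$, so that (2) also holds exactly when $m=0$. This step is bookkeeping rather than a real obstacle. The genuine input is Theorem~\ref{Theorem.APPP.2.1}, which guarantees that $m$ belongs to $\F(\supp(m)\cup\{0\})$; the rest is Hahn--Banach duality.
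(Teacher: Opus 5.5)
Your proof is correct. Direction (1)$\Leftrightarrow$(3) is the annihilator identity $\F(K\cup\{0\})^\perp=\{f\in\Lip_0(M)\colon f|_K=0\}$ combined with Hahn--Banach. Direction (1)$\Leftrightarrow$(2) comes from applying Theorem~\ref{Theorem.APPP.2.1} to the family defining $\supp(m)$, which is nonempty since it contains $M$. This is the standard derivation; the paper gives no proof of its own and simply cites \cite[Proposition 2.6]{APPP}.

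One small point on your degenerate case $K=\emptyset$. Under the paper's convention, $\supp(0)=\emptyset$ as soon as $M$ has two points, since you can intersect two distinct singletons. But for $M=\{0\}$ one gets $\supp(0)=\{0\}$, so (2) fails there while (1) and (3) hold. That is a harmless defect of the convention, not of your argument.
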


By Theorem \ref{Theorem.APPP.2.1} and Proposition \ref{prop.APPP.2.6},  we have   \begin{equation}
\label{eq:S(A)_in_F(M)}
\begin{split}
\mathfrak{S}(A)\cap\F(M) &
=\bigcap\{\F(K\cup\{0\} ): K\sbs M\textup{ is a closed neighborhood of }A\}\\
&=\F(A \cup\{0\}),
\end{split}
\end{equation}
where $\mathfrak{S}(A)$ and $\mathfrak{D}(A)$ were defined in \eqref{Defi.MathFraksmA} and \eqref{Defi.MathFrakDA}. Clearly, if $0\in A$, then $\mathfrak D(A)=\mathfrak S(A)$. If $0\not\in A$, the next proposition shows that $\mathfrak D(A)$ is a hyperplane of $\mathfrak S(A)$.

\begin{proposition}
\label{prop:decomposition_S(A)} 
Let $(M,d,0)$ be a complete pointed metric space and $A\sbs M$ be a non-empty closed subset with $0\not\in A$. Then, for all $x\in A$, we have 
\[\mathfrak{S}(A)=\R\delta(x)\oplus\mathfrak{D}(A).\]
\end{proposition}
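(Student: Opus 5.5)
We will show that $\delta(x)\in\mathfrak{S}(A)\setminus\mathfrak{D}(A)$ and that $\mathfrak{D}(A)$ has codimension at most one in $\mathfrak{S}(A)$, via an explicit projection. The inclusion $\R\delta(x)+\mathfrak{D}(A)\sbs\mathfrak{S}(A)$ is immediate. Indeed, $\mathfrak{D}(A)\sbs\mathfrak{S}(A)$, because a function vanishing on a neighborhood of $A$ is in particular constant there. Also $\delta(x)\in\F(A\cup\{0\})\sbs\mathfrak{S}(A)$, since $x\in A$.

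Next we check that $\delta(x)\notin\mathfrak{D}(A)$. Since $A$ is closed and $0\notin A$, we have $r:=d(0,A)>0$. Consider
\[
h(y):=\max\{0,\,1-2d(y,A)/r\}.
\]
This function is Lipschitz, and $h(0)=0$ because $d(0,A)=r$. It equals $1$ on $A$ and is constant ($=1$) on the neighborhood $B(A,\cdot)$ only at distance $0$. To get genuine constancy on a neighborhood, we use instead
\[
h(y):=\min\{1,\max\{0,\,2-3d(y,A)/r\}\}.
\]
This function equals $1$ on $B(A,r/3)$, vanishes when $d(y,A)\ge 2r/3$ (hence at $0$), and is $3/r$-Lipschitz, so $h\in\Lip_0(M)$. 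It is constant near $A$ and $\langle h,\delta(x)\rangle=1\neq0$, so $\delta(x)\notin\mathfrak{D}(A)$. The sum is therefore direct.

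For the reverse inclusion, take $\varphi\in\mathfrak{S}(A)$ and set $c:=\langle h,\varphi\rangle$ and $\psi:=\varphi-c\,\delta(x)$. We claim $\psi\in\mathfrak{D}(A)$. Let $f\in\Lip_0(M)$ be constant, say equal to $\lambda$, on a neighborhood $U$ of $A$. Then $f-\lambda h$ belongs to $\Lip_0(M)$ and vanishes on the neighborhood $U\cap B(A,r/3)$ of $A$. Since both $\varphi$ and $\delta(x)$ lie in $\mathfrak{S}(A)$,
\[
\langle f,\psi\rangle=\lambda\langle h,\psi\rangle=\lambda\bigl(c-c\,h(x)\bigr)=0.
\]
Hence $\varphi=c\,\delta(x)+\psi$ with $\psi\in\mathfrak{D}(A)$, which proves $\mathfrak{S}(A)=\R\delta(x)\oplus\mathfrak{D}(A)$. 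The map $\varphi\mapsto\langle h,\varphi\rangle\delta(x)$ is moreover a bounded, even $w^*$-continuous, projection.

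The only delicate point is constructing an $h\in\Lip_0(M)$ that is constant (equal to $1$) on a neighborhood of $A$ and vanishes at $0$. This is exactly where the hypotheses that $A$ is closed and $0\notin A$ are used, through $d(0,A)>0$.
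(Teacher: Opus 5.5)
Your proof is correct and is essentially the paper's argument. Both take a Lipschitz function vanishing at $0$ and equal to $1$ on a neighborhood of $A$, and use $\varphi\mapsto\langle h,\varphi\rangle\delta(x)$ as the projection whose kernel is $\mathfrak{D}(A)$; you build $h$ explicitly from $d(0,A)>0$, whereas the paper only fixes an open $U\supseteq A$ with $0\notin\overline{U}$. In the write-up, delete the first, abandoned candidate for $h$, since it is not constant on any neighborhood of $A$.
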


\begin{proof}
Fix an open $U\sbs M$ containing $A$ and such that $0\notin \overline{U}$. Let     $f\in\Lip_0(M )$ be  such that  $f|_U = 1$. 

Given $x\in A$, let us show that  \[\varphi - \langle f,\varphi\rangle\delta(x)\in\mathfrak{D}(A) \ \text{ for all }\ \varphi \in\mathfrak{S}(A).\] 
Indeed, let $\varphi \in\mathfrak{S}(A)$ and pick a  $g\in\Lip_0(M)$ which is  constant on some neighborhood $V\sbs M$ of $A$. Then, $g - g(x)f=0$ on the neighborhood $U\cap V$ of $A$. As $\varphi\in \mathfrak{S}(A)$, this gives
\[  \langle g, \varphi - \langle f,\varphi\rangle\delta(x)\rangle= \langle g - g(x)f,\varphi\rangle=0.\]
By the arbitrariness of $g$, this shows that $ \varphi - \langle f,\varphi\rangle\delta(x)\in \mathfrak D(A)$ as desired. 

At last, let  $P:\mathfrak{S}(A)\to\R\delta(x)$ be given by \[P(\varphi)=\langle f,\varphi\rangle\delta(x)\ \text{ for all }\ \varphi\in \mathfrak S(A).\]
So, $P$ is a bounded linear projection. If $z\in A$, then, since  $\F(A\cup\{0\})\sbs \mathfrak S(A)$, $\delta(z)\in \mathfrak S(A)$ and \[P(\delta(z))=f(z)\delta(x)=\delta(x)\neq 0.\] So, $P$ is surjective. Finally, notice that  $\ker(P)=\mathfrak{D}(A)$. Indeed, the inclusion ``$\sps$'' is immediate. For the other inclusion, let   $\varphi\in \ker(P)$ and let  $g\in \Lip_0(M)$ be constant on a neighborhood of $A$.  Set $t=g(x)$ and notice that $tf-g$ is zero on a neighborhood of $A$. As  $\varphi\in \mathfrak S(A)$, $\langle tf-g,\varphi\rangle=0$. Therefore, since  $\varphi\in \ker(P)$, $\langle f,\varphi\rangle=0$ and we conclude that   $\langle g,\varphi\rangle=0$. So, $\varphi\in \mathfrak D(A)$.
\end{proof}

\begin{proposition}
\label{how support sets interact with change of base point}
Let $(M,d,0)$ be a pointed metric space and $a\in M$. Let $\F_a(M)$ denote the Lipschitz free space of $(M,d,a)$ and define the subsets $\mathfrak{S}_a(A)$ and $\mathfrak{D}_a(A)$ of $\F_a(M)^{**}$ in the obvious way. Then, for all $A\sbs M$,
\[T^*_{0,a}(\mathfrak{D}(A)) = \mathfrak{D}_a(A)\]
In particular, if $a\in A$, then $T_{0,a}^*(\mathfrak{D}(A)) = \mathfrak{S}_a(A)$.
\end{proposition}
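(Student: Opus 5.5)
The plan is to verify the identity directly from the definitions, using the duality description of $T_{0,a}^*$. Recall that $T_{0,a}\colon \Lip_a(M)\to\Lip_0(M)$ is $f\mapsto f-f(0)\mathbf{1}_M$. Hence for $\varphi\in\F(M)^{**}$ and $g\in\Lip_a(M)$ we have
\[\langle g, T_{0,a}^*\varphi\rangle=\langle g-g(0)\mathbf{1}_M,\varphi\rangle .\]
The key observation is that $g$ is constant on a neighborhood $V$ of $A$ if and only if $g-g(0)\mathbf{1}_M$ is constant on $V$. Adding a constant does not affect local constancy near $A$.

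The first step is the inclusion $T_{0,a}^*(\mathfrak D(A))\sbs\mathfrak D_a(A)$. Take $\varphi\in\mathfrak D(A)$ and $g\in\Lip_a(M)$ constant on a neighborhood of $A$. Then $T_{0,a}g=g-g(0)\mathbf 1_M\in\Lip_0(M)$ is constant on that same neighborhood. So $\langle g,T_{0,a}^*\varphi\rangle=\langle T_{0,a}g,\varphi\rangle=0$.

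The second step is the reverse inclusion. By symmetry of roles, with $T_{a,0}$ the inverse of $T_{0,a}$ given by $f\mapsto f-f(a)\mathbf 1_M$, the same argument gives $T_{a,0}^*(\mathfrak D_a(A))\sbs\mathfrak D(A)$. Applying $T_{0,a}^*=(T_{a,0}^*)^{-1}$ (Proposition \ref{change of base point proposition}\eqref{change of base point proposition i}) then yields $\mathfrak D_a(A)\sbs T_{0,a}^*(\mathfrak D(A))$.

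Finally, for the \emph{in particular} part, note that when $a\in A$ the base point of $(M,d,a)$ lies in $A$. Functions in $\Lip_a(M)$ that are constant on a neighborhood $V$ of $A$ are then zero on $V$, since they vanish at $a\in A$. Conversely, functions vanishing on a neighborhood are constant there. So $\mathfrak D_a(A)=\mathfrak S_a(A)$, which is the remark made before Proposition \ref{prop:decomposition_S(A)}, now applied with base point $a$.

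There is no real obstacle here. The only point to watch is the direction of adjoints and inverses, i.e. that $T_{0,a}^*$ acts via precomposition with $T_{0,a}$, and that both $T_{0,a}$ and $T_{a,0}$ preserve local constancy near $A$.
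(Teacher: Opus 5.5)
Your proof is correct and follows the paper's argument. The paper likewise observes that $T_{0,a}$ (with inverse $T_{a,0}$) carries functions constant on a neighborhood of $A$ to functions constant on that same neighborhood, and that $\mathfrak{S}_a(A)=\mathfrak{D}_a(A)$ once the base point $a$ lies in $A$; you just write out the two inclusions and the adjoint bookkeeping that the paper leaves implicit.
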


\begin{proof}
Note that, for a subset $U$ of $A$, the set of all $f\in \mathrm{Lip}_a(M)$ that are constant on $U$ is mapped under $T_{0,a}$ to the set of all $f\in\mathrm{Lip}_0(M)$ that are constant on $U$, and for the second part, note that $\mathfrak{S}(A) = \mathfrak{D}(A)$ whenever $0\in A$.
\end{proof}

\begin{remark}
Note that the definition of the set $\mathfrak{S}(A)$ is dependent on the choice of the base point in $M$, while the definition of the set $\mathfrak{D}(A)$ is not.    
\end{remark}

\begin{remark}
\label{subset free space bidual containement in constant vanishers}
Let $M$ be a pointed metric space, $A\sbs M$ and $a\in A$. Then,
\begin{equation*}
\F(A)^{**}\sbs \mathfrak{D}(A),
\end{equation*}
where we canonically identify $\F(A)^{**} = R^*(\F(A)^{**})$, as in Proposition \ref{canonical identification}. This is because, clearly, $\F(A)\sbs\mathfrak{D}(A)$ and $\mathfrak{D}(A)$ is $w^*$-closed.
\end{remark}

\subsection{A crucial assumption on $\varphi\in\mathfrak{S}(A)$}
In order to simplify our statements below, we isolate an assumption which will be recurrent in most of our main technical results. 

\begin{assumption}
\label{Assumption_Pavlos}
Let $(M,d,0)$ be a complete pointed metric space, $A\sbs M$ and $\varphi\in\mathfrak{D}(A)$. Assume that there exists $(h_n)_{n=1}^\infty\sbs B_{\Lip_0(M)}$ such that 
\begin{enumerate}[label=\textup{(\roman*)}]
\item
\label{assumption_i}
$\|\varphi\|=\lim_n\langle h_n,\varphi\rangle$ and 
\item
\label{assumption_ii}
$({\overline{h_n}}_{A})_n$ and $({\underline{h_n}}_{A})_n$  are equicontinuous on $rM$. 
\end{enumerate}
\end{assumption}

\begin{remark}
Assumption \ref{Assumption_Pavlos} is invariant under change of base point.  That is, for any $a\in M$, the functional $\varphi\in\mathfrak{D}(A)$ satisfies Assumption \ref{Assumption_Pavlos}, witnessed by a sequence $(h_n)_{n=1}^\infty$, if and only if the functional $T_{0,a}^*\varphi\in\mathfrak{D}_a(A)$ also satisfies Assumption \ref{Assumption_Pavlos}, witnessed by the sequence $(T_{a,0}h_n)_{n=1}^\infty$. This is because, for all $f\in\mathrm{Lip}_0(M)$,
\[T_{a,0}(\overline{f}_A) = \overline{T_{a,0}f}_A\text{ and }T_{a,0}(\underline{f}_A) = \underline{T_{a,0}f}_A.\]
\end{remark}

We say that a sequence $(h_n)_n$ as in Assumption~\ref{Assumption_Pavlos}\ref{assumption_i} is \emph{norming for $\varphi$}. If a constant sequence can be taken, i.e., if there is a single 
 $h\in B_{\Lip_0(M)}$ with $\|\varphi\|=\langle h,\varphi\rangle$, we simply say that $h$ is \emph{norming for $\varphi$} and that $\varphi$ is \emph{norm attaining} or that it \emph{attains its norm}. For clarity and convenience, we unfold Assumption~\ref{Assumption_Pavlos}\ref{assumption_ii}: this condition   is readily seen to be equivalent to the statement that for every $\xi\in rM$ and every $\varepsilon>0$ there exists  an open set $U\sbs uM$ containing $\xi$ such that  for every $x\in U\cap M$ and $n\in \N$, we have \[\left|{\overline{h_n}}_{A}(\xi)-{\overline{h_n}}_{A}(x)\right|<\varepsilon\quad\textup{ and }\quad\left|{\underline{h_n}}_{A}(\xi)-{\underline{h_n}}_{A}(x)\right|<\varepsilon.\]

The next proposition gives some sufficient conditions so that Assumption~\ref{Assumption_Pavlos} holds.

\begin{proposition}
\label{prop:sufficient_for_assumption}
Let $(M,d,0)$ be a complete pointed metric space, $A\sbs M$ be closed and $\varphi\in\mathfrak{D}(A)$.  Assumption~\ref{Assumption_Pavlos} is satisfied whenever at least one of the following conditions holds:
\begin{enumerate}
\item\label{prop:sufficient_for_assumption.Item1}
 $\varphi$ is norm attaining,
\item\label{prop:sufficient_for_assumption.Item2} $M$ is proper,
\item\label{prop:sufficient_for_assumption.Item4} $A$ is compact.
\end{enumerate}
\end{proposition}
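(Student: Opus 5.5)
We check the three conditions one at a time, in each case exhibiting a norming sequence $(h_n)_n\sbs B_{\Lip_0(M)}$ whose McShane convolutions are equicontinuous on $rM$.

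\textbf{Case (1): $\varphi$ is norm attaining.} Let $h\in B_{\Lip_0(M)}$ with $\langle h,\varphi\rangle=\|\varphi\|$ and take the constant sequence $h_n=h$. Then $\overline{h}_A$ and $\underline{h}_A$ are single $1$-Lipschitz functions on $M$. They are real-valued on $rM$: on the set where $d(\cdot,0)$ is bounded, $|\overline h_A|$ is bounded, so the extensions to $uM$ are finite on $rM$. Since each extension is continuous on $uM$ by Proposition~\ref{Lemma.AP_measures.sM.Basics}, it is continuous at every $\xi\in rM$. Equicontinuity of a single function is just continuity, so this case is done.

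\textbf{Case (2): $M$ is proper.} By Example~\ref{Example.srM.proper}, $rM=M$. Any norming sequence $(h_n)\sbs B_{\Lip_0(M)}$ exists by the definition of the norm. All the $\overline{h_n}_A$ and $\underline{h_n}_A$ are $1$-Lipschitz on $M$, hence uniformly equicontinuous on $M=rM$. The only point to check is that equicontinuity is meant at points of $rM$ tested against $x\in U\cap M$. For $\xi\in M$ we can take $U$ to be the $uM$-interior of $\overline{B(\xi,\eps/2)}^{uM}$. Since $\xi\in M$ and $d(\{\xi\},M\setminus B(\xi,\eps/2))>0$, property (2) of $uM$ shows that $\xi$ is not in the closure of the complement, so this interior contains $\xi$.

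\textbf{Case (3): $A$ is compact.} This is the substantive case. For $x$ in a neighbourhood of $\xi$ we need to control $|\overline{h_n}_A(\xi)-\overline{h_n}_A(x)|$ uniformly in $n$. The idea is to reduce to finitely many values using compactness:
\begin{enumerate}
\item Fix $\eps>0$ and choose a finite $\eps$-net $F\sbs A$.
\item Since each $h_n$ is $1$-Lipschitz, replacing $A$ by $F$ changes the convolution by at most $2\eps$:
\[|\overline{h_n}_A-\overline{h_n}_F|\le 2\eps \quad\text{on } M,\]
in the spirit of \eqref{eq:extensions_from_neighbourhoods}.
\item Normalize $h_n$ by subtracting $h_n(a)$ for a fixed $a\in A$. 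This changes both convolutions by the same constant, so equicontinuity is unaffected, and may be done via the change of base point remark. The values $h_n(y)$, $y\in F$, then lie in the bounded range $[-\diam A,\diam A]$.
\item Write $\overline{h_n}_F=\min_{y\in F}\big(h_n(y)+d(\cdot,y)\big)$. Each term is a constant plus $d(\cdot,y)$, and $d(\cdot,y)$ extends continuously to $uM$, finite on $rM$. Hence, for $\xi\in rM$, there is a neighbourhood $U$ of $\xi$ on which
\[|d(x,y)-d(\xi,y)|<\eps \quad\text{for all } y\in F.\]
\item Because the minimum of finitely many functions is $1$-Lipschitz in the sup norm, on this $U$
\[|\overline{h_n}_F(x)-\overline{h_n}_F(\xi)|<\eps \quad\text{for every } n.\]
\item Combining with step 2 gives $|\overline{h_n}_A(x)-\overline{h_n}_A(\xi)|<5\eps$, uniformly in $n$. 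The sup-convolutions are handled symmetrically.
\end{enumerate}

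The main obstacle is step 2 at the point $\xi\in rM\setminus M$. There the estimate must be passed to the extensions, which is justified by continuity of the extensions and density of $M$ in $uM$. One must also ensure that the neighbourhood $U$ in step 4 is independent of $n$; it is, because $U$ depends only on the finite set $F$.
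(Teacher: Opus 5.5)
Your proof is correct. Cases (1) and (2) match the paper: a constant norming sequence in (1), and in (2) the facts that $rM=M$ and all convolutions are $1$-Lipschitz. Your extra check in (2), that a $uM$-neighbourhood of $\xi\in M$ can be chosen with trace inside a small ball, is fine, though it is immediate from $M$ being a topological subspace of $uM$.

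For (3) you take a genuinely different route. The paper uses Arzel\`a--Ascoli to extract a subsequence of the norming sequence converging uniformly on $A$ to some $h\in B_{\Lip_0(M)}$. Since $\sup|\overline{h_n}_A-\overline{h}_A|\le\sup_A|h_n-h|$ (and likewise for the sup-convolutions), the convolutions converge uniformly on $rM$. It then runs the standard $\eps/3$ argument that a uniformly convergent sequence of functions continuous at $\xi$ is equicontinuous at $\xi$, handling the first $n_0$ terms by a finite intersection of neighbourhoods.

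You instead replace $A$ by a finite $\eps$-net $F$, at a cost of $2\eps$ uniformly in $n$. You then observe that $\overline{h_n}_F$ is the minimum of the finitely many functions $h_n(y)+d(\cdot,y)$, $y\in F$. So a single neighbourhood controlling the finitely many extended distance functions $d(\cdot,y)$ works for every $n$ at once, whatever the additive constants $h_n(y)$ are.

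This buys a cleaner and stronger statement. No subsequence or limit function is needed. In fact the whole family $\{\overline{f}_A,\underline{f}_A\colon f\in B_{\Lip_0(M)}\}$ is equicontinuous at every point of $rM$, so every norming sequence witnesses Assumption~\ref{Assumption_Pavlos}. Your step 3, normalizing so that the values $h_n(y)$ are bounded, is never used, because those constants play no role in step 5; it can be dropped. Both arguments ultimately rest on the total boundedness of $A$.
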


\begin{proof}
We address each case individually.

\eqref{prop:sufficient_for_assumption.Item1}:  If $\varphi$ attains its norm, we can consider a constant sequence of functions equal to some norming function for $\varphi$.

\eqref{prop:sufficient_for_assumption.Item2}: If $M$ is proper, then $rM=M$ (Example~\ref{Example.srM.proper}). Therefore, as each of the functions  $\overline{h_n}_A $ and $ \underline{h_n}_A$ is  $1$-Lipschitz on $M$, Assumption~\ref{Assumption_Pavlos}\ref{assumption_ii}  comes for free for any sequence $(h_n)_{n=1}^{\infty}\sbs B_{\Lip_0(M)}$. It is then enough to choose a sequence $(h_n)_n$  satisfying Assumption~\ref{Assumption_Pavlos}\ref{assumption_i}.

\eqref{prop:sufficient_for_assumption.Item4}: Take any sequence $(h_n)_{n=1}^\infty\sbs B_{\Lip_0(M)}$ such that $\|\varphi\|=\lim_{n\to\infty}\langle h_n,\varphi\rangle$. If $A$ is compact, by the Arzel\`a--Ascoli theorem we may choose a subsequence, denoted $(h_n)_{n=1}^\infty$ again, and an $h\in B_{\Lip_0(M)}$ satisfying $$\lim_{n\to\infty}\sup_{x\in A}\left|h_n(x)-h(x)\right|=0.$$ It follows straightforwardly that $(\overline{h_n}_A)_n $ and  $( \underline{h_n}_A)_n$ converge uniformly on $rM$ to $\overline{h}_A$ and $\underline{h}_A$, respectively. So, given $\varepsilon>0$, there is $n_0\in\N$ such that for each $n\geq n_0$ we have 
\[\sup_{\zeta\in rM}\left|\overline{h_n}_A(\zeta)-\overline{h}_A(\zeta)\right|<\varepsilon/3\quad\textup{ and }\quad\sup_{\zeta\in rM}\left|\underline{h_n}_A(\zeta)-\underline{h}_A(\zeta)\right|<\varepsilon/3.\]
Then, if $\xi\in rM$, choose an open set $U\sbs uM$ containing $\xi$ such that \[\left|\overline{h}_A(\xi)-\overline{h}_A(x)\right|<\varepsilon/3\ \text{ and } \left|\underline{h}_A(\xi)-\underline{h}_A(x)\right|<\varepsilon/3\]  {and
\[\left|\overline{h_n}_A(\xi)-\overline{h_n}_A(x)\right|<\varepsilon/3\ \text{ and } \left|\underline{h_n}_A(\xi)-\underline{h_n}_A(x)\right|<\varepsilon/3\ \text{ for all }n\in\{1,\ldots, n_0\}\]}
whenever $x\in U\cap M$. Then, for every  {$n\in\N$} and every $x\in U\cap M$, we have
\begin{align*}
\left|\overline{h_n}_A(\xi)-\overline{h_n}_A(x)\right|&\leq\left|\overline{h_n}_A(\xi)-\overline{h}_A(\xi)\right|+\left|\overline{h}_A(\xi)-\overline{h}_A(x)\right|\\
&\quad\quad+\left|\overline{h}_A(x)-\overline{h_n}_A(x)\right|\\&<\varepsilon,     
\end{align*}
and analogously $\left|\underline{h_n}_A(\xi)-\underline{h_n}_A(x)\right|<\varepsilon$.    
\end{proof}

\section{\texorpdfstring{The De Leeuw representation of elements in $\F(M)^{**}$ and supports}{The De Leeuw representation of elements in and supports}}
Our goal in this section is to prove Theorem~\ref{prop:shadow_almost_norm_attaining}, which reveals a relation between the support of a functional in $\F(M)^{**}$ and the support of its De Leeuw representing measure. Recall from \S\ref{ss:results} that for all $\varphi\in\F(M)^{**}$, the optimal representation set $\mathcal{M}_{op}(\varphi) \sbs \mathcal{M}(\beta\widetilde M)$ is nonempty. It will be important for us to understand the support of measures in $\mathcal{M}_{op}(\varphi)$. The {\it support} of a positive $\mu \in \mathcal M(\beta \widetilde M)$ is given by 
\[\supp(\mu) \coloneqq \left\{x\in \beta \widetilde M\colon \mu(U)>0\ \text{ for all open subsets } U \text{ containing } x\right\}.\]

While the elements in $\mathcal{M}_{op}(\varphi)$ are measures in $\beta \widetilde M$, our Lipschitz-free spaces are built over $M$. So, we must understand how the supports of elements in $\mathcal{M}_{op}(\varphi)$ are perceived in  $M$. For that, for each $i\in \{1, 2\}$, let 
\[\mathfrak{p}_i:\beta\widetilde{M}\to uM \] be the   continuous extension of the $i$-th coordinate projection of $\widetilde{M}$ onto $M$. Let also   \[\mathfrak{p}:\beta\widetilde{M}\to uM \times uM\] be the continuous extension of the inclusion map of $\widetilde{M}$ into $M\times M$. So, \[\mathfrak p(\eta)=(\mathfrak p_1(\eta),\mathfrak p_2(\eta))\]
for all $\eta\in \beta \widetilde M$.

The following theorem can be viewed as an extension of {\cite[Lemma 2.3 and Proposition 5.6]{APS1}} to certain functionals in $\F(M)^{**}$ that are not necessarily weak$^*$ continuous.

\begin{theorem}
\label{prop:shadow_almost_norm_attaining}
Let $(M,d,0)$ be a complete pointed metric space, $A\sbs M$ be closed, and $\varphi\in\mathfrak{D}(A)$. If Assumption~\ref{Assumption_Pavlos} holds and $\varphi$ avoids infinity, then 
\[\mathfrak{p}\left(\supp (\mu)\right)\,\sbs\,\, \gex^u(A)\times \gex^u(A)\] for all  $\mu\in \mathcal M_{op}(\varphi)$.
Moreover, if $M$ is proper and $A$ is compact, then
\[\mathfrak{p}\left(\supp (\mu)\right)\,\sbs\,\, \gex(A)\times \gex(A).\]\end{theorem}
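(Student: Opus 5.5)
The plan is to show that every $\eta\in\supp(\mu)$ satisfies $g_A(\mathfrak p_1(\eta))=0$ and $g_A(\mathfrak p_2(\eta))=0$. Lemma~\ref{lem:gap function vs approximate intervals} then gives $\mathfrak p(\supp(\mu))\sbs \gex^u(A)\times\gex^u(A)$. In the case $M$ proper and $A$ compact, the points $\mathfrak p_i(\eta)$ will moreover lie in $rM=M$, and the second part of Lemma~\ref{lem:gap function vs approximate intervals} gives the inclusion in $\gex(A)\times \gex(A)$.

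\textbf{Step 1 (many extensions are norming).} Fix $\mu\in\mathcal M_{op}(\varphi)$ and a norming sequence $(h_n)$ as in Assumption~\ref{Assumption_Pavlos}. Fix $\eps>0$ and set $B_\eps=\overline{B(A,\eps)}$. Let $e$ be any $1$-Lipschitz extension of $h_n|_{B_\eps}$, normalized via $T_0$. Then $e-h_n$ is constant on a neighbourhood of $A$. Since $\varphi\in\mathfrak D(A)$, this gives $\langle e,\varphi\rangle=\langle h_n,\varphi\rangle$.

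Hence $\int \Phi e\,d\mu\to\|\mu\|$ uniformly over all such extensions $e$. Because $\mu\ge0$ and $\Phi e\le 1$, this means $\int(1-\Phi e)\,d\mu\to0$ uniformly in $e$. By \eqref{eq:extensions_from_neighbourhoods}, we may replace the McShane extensions of $h_n$ from $B_\eps$ by those from $A$, at a cost of $2\eps$. Letting $\eps\to0$ along the way, we may therefore work with $u_n=\underline{h_n}_A$, $o_n=\overline{h_n}_A$ and with the functions built from them below, up to errors that vanish.

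\textbf{Step 2 (a pointwise gap-detecting extension).} For $y\in M$, define
\[e_{n,y}=\max\bigl(u_n,\ o_n(y)-d(\cdot,y)\bigr).\]
This function is $1$-Lipschitz and satisfies $u_n\le e_{n,y}\le o_n$, so it is again an extension of $h_n|_A$. At a pair $(x,y)$ with $u_n(x)\ge o_n(y)-d(x,y)$ we get
\[\Phi e_{n,y}(x,y)=\Phi u_n(x,y)-\frac{o_n(y)-u_n(y)}{d(x,y)}.\]
In the remaining case $e_{n,y}(x)=o_n(y)-d(x,y)$, so $\Phi e_{n,y}(x,y)=-1$, which is even better. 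Since $o_n(y)-u_n(y)\ge g_A(y)$, the function $1-\Phi e_{n,y}$ dominates $g_A(y)/d(x,y)$ near $(x,y)$.

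The symmetric construction $\min\bigl(o_n,\ u_n(x)+d(\cdot,x)\bigr)$ controls $g_A(x)$ in the same way.

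\textbf{Step 3 (localization; main obstacle).} Suppose $\eta\in\supp(\mu)$ with $\xi=\mathfrak p_2(\eta)$ and $g_A(\xi)=\gamma>0$. The difficulty is that $e_{n,y}$ depends on the point $y$, whereas Step~1 requires a single function integrated against $\mu$ on a set of positive measure.

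This is where Assumption~\ref{Assumption_Pavlos}\ref{assumption_ii} enters. Because $\varphi$ avoids infinity, I would first argue that $\mu$ gives no mass to pairs with $\mathfrak p_i\notin rM$; this can be done by pairing with $W_{\daleth_k}$ and using Proposition~\ref{Prop.AP_measures.Cor.3.4}. So $\xi\in rM$.

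By equicontinuity there is a neighbourhood $U\ni\xi$ in $uM$ on which all $o_n,u_n$ oscillate by less than $\gamma/8$, uniformly in $n$. I would then replace $e_{n,y}$ by
\[\max\bigl(u_n,\ o_n(\xi)-\gamma/4-d(\cdot,\xi)\bigr).\]
Its Lipschitz extension to $uM$ is still an extension of $h_n|_A$, and it is independent of $y$. On the neighbourhood $V=\mathfrak p_2^{-1}(U)$ of $\eta$ (intersected with a region where $d$ is bounded, since $\Phi$-values are bounded), Step~2 gives $1-\Phi e\ge c\gamma>0$ for some $c>0$. Since $\mu(V)>0$, this contradicts $\int(1-\Phi e)\,d\mu\to0$.

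The subtle points are the handling of $d(\mathfrak p_1,\mathfrak p_2)=\infty$ on $\beta\widetilde M$ and the uniformity in $n$. I expect the bookkeeping there to be the hardest part of the proof.
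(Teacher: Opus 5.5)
\textbf{There is a genuine gap in Step~3.} Your overall strategy matches the paper's (Lemma~\ref{lem:strategy_prop}): modify a norming sequence off a neighbourhood of $A$ so that $\Phi$ stays uniformly below $1$ near $\eta$, then use positivity of $\mu$. The problem is the function you use. Freezing the peak of $e_{n,y}$ at $\xi$ through the cone $c_n-d(\cdot,\xi)$, with $c_n=o_n(\xi)-\gamma/4$, does not give $1-\Phi e\ge c\gamma$ on $V$, for two independent reasons.

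First, suppose $\xi\in rM\setminus M$. Then the extended function $y\mapsto d(y,\xi)$ does not tend to $0$ as $y\to\xi$. For complete $M$ its value at $\xi$ is always strictly positive. For instance, in $\ell_2$, if $\xi$ is a cluster point of $(te_k)_k$, then $d(y,\xi)=(\|y\|^2+t^2)^{1/2}\to\sqrt2\,t$ as $y\to\xi$. So $e(y)$ need not be close to $c_n$ for $y\in U\cap M$; the cone may even lie below $u_n$ there. The Step~2 computation therefore does not transfer.

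Second, the construction fails even when $\xi\in M$. Near $\xi$ your function equals $c_n-d(\cdot,\xi)$, which has slope exactly $1$ towards $\xi$. Take $M=\R^2$, $A=\{0\}$, $\xi=(1,0)$, so $\gamma=2$ and $e=\max(-|\cdot|,\tfrac12-|\cdot-\xi|)$. The pairs $x=\xi+tv$, $y=\xi+2tv$ lie in $V$ and satisfy $\Phi e(x,y)=1$. The same holds for every $\eta$ with $\mathfrak p_1(\eta)=\mathfrak p_2(\eta)=\xi$ obtained as a limit of such pairs, so your argument says nothing about these $\eta$. The root cause is already in Step~2: the estimate for $e_{n,y}$ holds at the single pair $(x,y)$, not on a neighbourhood of it.

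\textbf{The fix: anchor on the trace of a neighbourhood, not on a point.} Fix $D=\overline{B(A,r)}$ with $r$ small compared with $\gamma$. Choose $U\ni\xi$ on which $g_A>3\gamma/4$ and all $\overline{h_n}_D$ oscillate by less than $\gamma/8$. Put $c_n=\overline{h_n}_D(\xi)-\gamma/4$ and
\[
e_n=\max\bigl(\underline{h_n}_D,\ c_n-d(\cdot,U\cap M)\bigr).
\]
Then:
\begin{enumerate}
\item The oscillation bound gives $c_n-d(\cdot,U\cap M)\le\overline{h_n}_D$, so $e_n=h_n$ on $D$.
\item The gap gives $e_n\equiv c_n$ on $U\cap M$.
\item Consequently $\Phi e_n\le\max\{0,1-\gamma/(16R)\}$ on all pairs whose second coordinate lies in $U\cap M$ and whose coordinates lie in $B(0,R)$, uniformly in $n$.
\end{enumerate}
This is essentially the paper's Lemma~\ref{Lemma.technical.prop}, where $f_n$ is declared constant on $V_1\cap M$ and then extended. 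Written in this closed form, it even avoids the paper's case split according to whether $\mathfrak p_1(\eta)=\mathfrak p_2(\eta)$.

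\textbf{Two smaller points.}
\begin{enumerate}
\item Build your functions from $\underline{h_n}_D$ and $\overline{h_n}_D$ for one fixed $r$, not from $u_n,o_n$ ``up to vanishing errors''. A function agreeing with $h_n$ only on $A$ can pair differently with $\varphi\in\mathfrak D(A)$, and sup-norm errors do not control the pairing. The estimate \eqref{eq:extensions_from_neighbourhoods} serves only to transfer the gap and equicontinuity estimates from $A$ to $D$.
\item ``So $\xi\in rM$'' does not follow from $\mu$ being concentrated on $\mathcal R=\mathfrak p^{-1}(rM\times rM)$, because $\supp(\mu)$ can leave this open set. Argue first for $\eta\in\mathcal R\cap\supp(\mu)$; you need both coordinates in $rM$ anyway, to bound $d(x,y)$ near $\eta$. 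Then conclude using $\supp(\mu)\sbs\overline{\mathcal R\cap\supp(\mu)}$ and closedness of $\gex^u(A)$. The concentration of $\mu$ on $\mathcal R$ is itself a nontrivial input; the paper quotes \cite[Corollary 2.12]{APS2}. It is not an immediate consequence of pairing with $W_{\daleth_k}$, since these operators are not contractive.
\end{enumerate}
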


Although the stronger statement about proper metric spaces is not used for our main results, it yields Theorem \ref{thm:L_projection_supports}, and we include it for potential future applications. The proof of Theorem~\ref{prop:shadow_almost_norm_attaining} is technical and rather long. For clarity, we {start by establishing two auxiliary lemmas. The first one is analogous to a part of \cite[Lemma 2.3]{APS1} and it shows that any norming sequence of a functional $\varphi$ in $\F(M)^{**}$ must ``approach maximal slope'' at all points of the support of any representing measure from $\mathcal M_{op}(\varphi)$.

\begin{lemma}
\label{lem:strategy_prop}
Let $(M,d,0)$ be a complete pointed metric space, $\varphi\in\F(M)^{**}$, $\mu\in\mathcal M_{op}(\varphi)$ and $\eta\in\beta\widetilde{M}$. If there is a sequence $(f_n)_{n=1}^\infty\sbs B_{\Lip_0(M)}$ satisfying  $\|\varphi\|=\lim_{n\to\infty}\langle f_n,\varphi\rangle$ and for which some neighborhood $ W\sbs \beta\widetilde{M}$ of $\eta$ satisfies  
\begin{equation*}
    \label{eq:norming_sequence}
\sup\{\Phi f_n(\tau)\colon\,n\in\N, \tau\in W\}< 1,
\end{equation*}
then $\eta\notin\supp(\mu)$.
\end{lemma}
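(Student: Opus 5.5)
We argue by contradiction: assume $\eta\in\supp(\mu)$, so that $\mu(W)>0$ (shrinking $W$ to an open neighbourhood of $\eta$ if necessary). Set $\delta\coloneqq 1-\sup\{\Phi f_n(\tau)\colon n\in\N,\ \tau\in W\}>0$. The idea is that the optimal representation $\mu$ is positive with $\|\mu\|=\|\varphi\|$, so any norming sequence must have $\Phi f_n$ close to $1$ on $\mu$-almost all of $\beta\widetilde M$. A uniform gap of size $\delta$ on a set of positive $\mu$-measure then costs a fixed amount $\delta\mu(W)$ in $\langle f_n,\varphi\rangle$, which contradicts $\langle f_n,\varphi\rangle\to\|\varphi\|$.

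Concretely, for every $n$ we use $\mu\in\mathcal M_{op}(\varphi)$ to write
\[\langle f_n,\varphi\rangle=\int_{\beta\widetilde M}\Phi f_n\,d\mu=\int_{W}\Phi f_n\,d\mu+\int_{\beta\widetilde M\setminus W}\Phi f_n\,d\mu .\]
Since $f_n\in B_{\Lip_0(M)}$ and $\Phi$ is an isometry into $C(\beta\widetilde M)$, we have $\Phi f_n\le 1$ everywhere on $\beta\widetilde M$. Here we use that $\Phi f_n$ is the continuous extension of a function bounded by $1$ on the dense set $\widetilde M$. On $W$ we have instead $\Phi f_n\le 1-\delta$. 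Since $\mu\ge0$, this gives
\[\langle f_n,\varphi\rangle\le (1-\delta)\mu(W)+\mu(\beta\widetilde M\setminus W)=\|\mu\|-\delta\mu(W)=\|\varphi\|-\delta\mu(W).\]
Letting $n\to\infty$ yields $\|\varphi\|\le\|\varphi\|-\delta\mu(W)$, which is impossible because $\delta\mu(W)>0$. Hence $\eta\notin\supp(\mu)$.

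There is no real obstacle here. The only points needing care are to take $W$ open, so that $\eta\in\supp(\mu)$ actually gives $\mu(W)>0$ (any neighbourhood contains an open one, and the supremum bound persists on the smaller set), and to note that the strict inequality in the hypothesis provides a uniform $\delta$ independent of $n$. That uniformity is exactly what the hypothesis on the whole sequence, rather than on each $f_n$ separately, supplies.
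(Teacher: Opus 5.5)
Your proof is correct and follows essentially the same route as the paper. Both split $\int \Phi f_n\,d\mu$ over $W$ and its complement, use $\Phi f_n\le 1$ everywhere together with the uniform bound on $W$, and use positivity and $\|\mu\|=\|\varphi\|$ to force $\mu(W)=0$. The paper argues this directly rather than by contradiction, and your step of passing to an open neighbourhood is a harmless extra bit of care.
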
}

\begin{proof}
Let $(f_n)_n$ be such a norming sequence and $W \sbs \beta\widetilde{M}$ a neighborhood of $\eta$ satisfying $\sup\{\Phi f_n(\tau)\colon\,n\in\N, \tau\in W\}<r$ for some $r\in(0,1)$. Then
\begin{align*}
\|\varphi\|&=\lim_{n\to\infty}\langle f_n,\varphi\rangle\\
&=\lim_{n\to\infty}\int_{\beta\widetilde{M}}\Phi f_n  d\mu\\
&\leq\mu(\beta\widetilde{M}\setminus W)+r\mu(W).
\end{align*}
Since  $\mu\in\mathcal{M}_{op}(\varphi)$, $\|\varphi\|=\|\mu\|= \mu(\beta\widetilde{M})$. Hence, we must have $\mu(W)=0$. As $W$ is a neighborhood of $\eta$, this implies that $\eta\notin\supp(\mu)$.\end{proof}

Hence, our strategy for the proof of Theorem \ref{prop:shadow_almost_norm_attaining} will be to show that, for a set of points $\eta \in \beta\widetilde{M}$ of full measure $\mu$, if at least one coordinate of $\eta$ does not belong to $\gex^u(A)$, then $\varphi$ admits a norming sequence satisfying the hypothesis of Lemma~\ref{lem:strategy_prop} for $\eta$. The main technicalities of the argument are isolated in the next lemma.

\begin{lemma}\label{Lemma.technical.prop}
    Let $(M,d,0)$ be a complete pointed metric space, $A\sbs M$ be closed with $0\in A$. Let $\eta\in \beta \widetilde M$ with \begin{align}\label{Eq.Lemma.technical.prop.1} \mathfrak{p}(\eta)\in (rM\times rM)\setminus  (\gex^u(A)\times  \gex^u(A)) \end{align} and let  $(h_n)_{n=1}^\infty$ be a sequence in  $ B_{\Lip_0(M)}$ such that $(\overline{h_n}_A)_n$ and $(\underline{h_n}_A)_n$ are both equicontinuous at both $\mathfrak p_1(\eta)$ and $\mathfrak p_2(\eta)$. Then, there are $(f_n)_{n=1}^\infty\sbs B_{\Lip_0(M)}$ and a neighborhood $W{\sbs \beta\widetilde{M}}$ of $\eta$ such that:
    \begin{enumerate}
        \item For all $n\in\N$, the functions $h_n$ and $f_n$ coincide on a neighborhood of $A$. In particular, $\langle h_n,\varphi\rangle=\langle f_n,\varphi\rangle$ for all $n\in\N$ and all $\varphi\in\mathfrak{S}(A)$.
        \item $\sup\{\Phi f_n(\tau)\colon n\in\N, \tau \in W\}<1$.
    \end{enumerate}

Assume furthermore that $M$ is proper and $A$ is compact. Then, the same conclusion holds with \eqref{Eq.Lemma.technical.prop.1} replaced by the weaker condition that \begin{align}\label{Eq.Lemma.technical.prop.2} \mathfrak{p}(\eta)\in (rM\times rM)\setminus  (\gex(A)\times  \gex(A)). \end{align}
\end{lemma}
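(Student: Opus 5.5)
The plan is to keep $h_n$ near $A$ and to flatten it near the offending coordinate of $\eta$. Write $\mathfrak p(\eta)=(\xi_1,\xi_2)\in rM\times rM$. By symmetry (replacing $f_n$ by the appropriate choice below and using $\Phi f(x,y)=-\Phi f(y,x)$ where needed) I assume $\xi_1\notin\gex^u(A)$. By Lemma~\ref{lem:gap function vs approximate intervals}, $c\coloneqq g_A(\xi_1)>0$, and hence $\overline{h_n}_A(\xi_1)-\underline{h_n}_A(\xi_1)\ge c$ for every $n$. In the proper/compact case, $\gex(A)=\{\xi\in uM:g_A(\xi)=0\}$ by the same lemma, so \eqref{Eq.Lemma.technical.prop.2} gives the same conclusion $g_A(\xi_1)>0$, and the rest of the argument is identical.

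Fix $\varepsilon\ll c$ and set $A_\varepsilon=\overline{B(A,\varepsilon)}$. Write $u_n=\underline{h_n}_{A_\varepsilon}$ and $o_n=\overline{h_n}_{A_\varepsilon}$. By \eqref{eq:extensions_from_neighbourhoods} these are $2\varepsilon$-close to $\underline{h_n}_A$ and $\overline{h_n}_A$. Both $u_n$ and $o_n$ are $1$-Lipschitz, they agree with $h_n$ on $A_\varepsilon$, and they vanish at $0\in A$. The hypothesis gives equicontinuity of $(\underline{h_n}_A)_n,(\overline{h_n}_A)_n$ at $\xi_1$, and these families are $2\varepsilon$-close to $(u_n)_n,(o_n)_n$. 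So I will choose an open $U\ni\xi_1$ in $uM$ such that, for all $n$ and all $x\in U\cap M$, the gap satisfies $o_n(x)-u_n(x)\ge c-8\varepsilon$, and the values $u_n(x),o_n(x)$ differ from $u_n(\xi_1),o_n(\xi_1)$ by less than $\varepsilon$.

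Next, put $k_n\coloneqq\tfrac12\bigl(u_n(\xi_1)+o_n(\xi_1)\bigr)$ and
\[
f_n\coloneqq\max\bigl(u_n,\min(o_n,k_n)\bigr).
\]
This is $1$-Lipschitz. It equals $h_n$ on $A_\varepsilon$, because there $u_n=o_n=h_n$. It vanishes at $0$, so $f_n\in B_{\Lip_0(M)}$, and item (1) follows from the definition of $\mathfrak S(A)$. On $U\cap M$ we have $u_n<k_n<o_n$ with margins of about $c/2$, so $f_n\equiv k_n$ there. Consequently, on $W_1=\mathfrak p^{-1}(U\times U)$ the function $\Phi f_n$ vanishes at every point of $\widetilde M$, and by continuity on all of $W_1$.

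If $\xi_2\in\overline U$, after shrinking $U$ appropriately, this already gives (2). Otherwise I use property (2) of $uM$ (Proposition~\ref{Lemma.AP_measures.sM.Basics}) to find neighbourhoods $U\ni\xi_1$ and $V\ni\xi_2$ with $d(U\cap M,V\cap M)\ge r>0$, and take $W=\mathfrak p^{-1}(U\times V)$. For $x\in U$ and $y\in V$ I then need
\[
f_n(x)-f_n(y)=k_n-f_n(y)\le(1-\delta)\,d(x,y)
\]
uniformly in $n$. The estimate I would aim for writes $f_n(y)\ge u_n(y)$ or $f_n(y)\ge\min(o_n(y),k_n)$ and compares through near-minimizers $a\in A_\varepsilon$ of $o_n(x)$. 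The requirement is that the gap $\ge c/2$ at $x$ forces a definite loss compared with $d(x,y)\ge r$.

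This cross-term estimate is the main obstacle. A single constant truncation level $k_n$ need not give a uniform slope deficit when $\xi_2$ is far away. If the naive bound fails, I would adjust the construction. One option is to replace the constant $k_n$ by $k_n+\lambda\,\rho$, with $\rho$ a $1$-Lipschitz bump $\rho=\max(0,s-d(\cdot,U))$ and suitable $\lambda<1$. This makes $f_n$ locally $\lambda$-Lipschitz near $\xi_1$ while staying sandwiched between $u_n$ and $o_n$. Another option is to interpolate $f_n=(1-t)h'_n+t\cdot(\text{truncation})$, with $h'_n$ a $1$-Lipschitz extension, so that the strict inequality comes from convexity. In either case, equicontinuity at $\xi_2$ would control the $y$-side uniformly in $n$.
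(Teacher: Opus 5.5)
\textbf{There is a genuine gap at the crux of the lemma.} In the case $\mathfrak p_1(\eta)\neq\mathfrak p_2(\eta)$ you state the required bound $k_n-f_n(y)\le(1-\delta)\,d(x,y)$. You then call it the main obstacle, doubt that a constant truncation level can give it, and only sketch two alternative constructions without checking either. As written, item (2) is therefore proved only when $\mathfrak p_1(\eta)=\mathfrak p_2(\eta)$.

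The missing ingredient is the hypothesis $\mathfrak p(\eta)\in rM\times rM$. You record it but never use it, and it is exactly what makes the slope deficit uniform. Your worry about $\mathfrak p_2(\eta)$ being ``far away'' is unfounded, since both points lie at finite distance from $0$.

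There is also a minor slip. Equicontinuity of $(\overline{h_n}_A)_n,(\underline{h_n}_A)_n$ at $\xi_1$ transfers to $(o_n)_n,(u_n)_n$ only up to the $4\varepsilon$ error coming from \eqref{eq:extensions_from_neighbourhoods}. So ``less than $\varepsilon$'' should read ``less than $5\varepsilon$''. This is harmless if $\varepsilon<c/14$.

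\textbf{Your clamp $f_n=\max(u_n,\min(o_n,k_n))$ in fact works as it stands.}
\begin{itemize}
\item Fix $R>\max\{d(\xi_1,0),d(\xi_2,0)\}$ and let $V=\{\zeta\in uM: d(\zeta,0)<R\}$. This set is open because $d(\cdot,0)$ extends continuously to $uM$.
\item Replace $U$ by $U\cap V$ and put $W=\mathfrak p^{-1}(U\times V)$. Then $d(x,y)<2R$ whenever $x\in U\cap M$ and $y\in V\cap M$.
\item Set $c'=c/2-7\varepsilon>0$. The gap at $\xi_1$ together with approximate equicontinuity gives $k_n\le o_n(x)-c'$ on $U\cap M$.
\item If $f_n(y)<k_n$, then $o_n(y)<k_n$, since otherwise $f_n(y)\ge\min(o_n(y),k_n)=k_n$. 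Hence $f_n(y)=o_n(y)$ and
\[
f_n(x)-f_n(y)=k_n-o_n(y)\le o_n(x)-o_n(y)-c'\le d(x,y)-c'.
\]
\item Consequently $\Phi f_n\le\max\{0,1-c'/(2R)\}$ on $W\cap\widetilde M$, and on all of $W$ by density and continuity.
\end{itemize}

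Dually, $f_n(x)>k_n$ forces $f_n(x)=u_n(x)$. So $|f_n(x)-f_n(y)|\le\max\{0,d(x,y)-c'\}$ whenever one of the two points lies in $U\cap M$. This justifies your symmetry remark and also covers $\mathfrak p_1(\eta)=\mathfrak p_2(\eta)$.

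\textbf{Comparison with the paper.} Completed this way, your route genuinely differs from the paper's.
\begin{itemize}
\item The paper defines $f_n$ piecewise on $D\cup V_1\cup V_2$: a constant near $\mathfrak p_1(\eta)$ and $\overline{h_n}_D$ near $\mathfrak p_2(\eta)$. It verifies the Lipschitz bound by hand and then extends.
\item It further splits into sub-cases according to the size of $\overline{h_n}_D(\mathfrak p_1(\eta))-\overline{h_n}_D(\mathfrak p_2(\eta))$. This uses equicontinuity at $\mathfrak p_2(\eta)$ and the separation $d(O\cap M,Q\cap M)>0$.
\item Your global clamp is automatically $1$-Lipschitz and avoids all of that. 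It needs no separation and no sub-cases, and uses equicontinuity only at the offending coordinate.
\end{itemize}
Both arguments hinge on the same point: converting an additive deficit into a multiplicative one via $d(x,y)\le 2R$. Your bump and interpolation fallbacks are unnecessary.
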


\begin{proof}
The proof will consist of modifying the functions  $h_n$  outside some neighborhood of $A$ so that the resulting sequence satisfies the required properties.  

By the hypothesis on $\eta$,  $\mathfrak{p}(\eta)=(\mathfrak{p}_1(\eta),\mathfrak{p}_2(\eta))$ is not in $\gex^{u}(A)\times \gex^{u}(A)$. Hence, either 
$\mathfrak{p}_1(\eta)$ or $\mathfrak{p}_2(\eta)$ is not in $\gex^u(A)$. Assume that  $\mathfrak{p}_1(\eta)\notin \gex^{u}(A)$ -- the other case will follow similarly, and we highlight the differences at the end of this proof.

As $\mathfrak{p}_1(\eta)\notin \gex^{u}(A)$ and as 
\begin{equation}\label{Eq.10Aug26.1} 
\gex^u(A)=\{\xi\in uM \colon g_A (\xi)=0 \} 
\end{equation}
(see Lemma~\ref{lem:gap function vs approximate intervals}), we have $g_A(\mathfrak{p}_1(\eta))>0$. By the continuity of the McShane gap function $g_A$, there is an open $U\sbs uM$ containing $\mathfrak p_1(\eta)$ such that 
\[\delta \coloneqq \frac{1}{2}\inf_{\zeta\in U}g_A(\zeta)>0.\]
By the definition of  $g_A$,
\[\overline{h_n}_A(\zeta)-\underline{h_n}_A(\zeta)\geq \delta \]
for all $\zeta\in U$ {and all $n\in\N$}. Let $r\in (0,\delta/8)$ be small, to be chosen later depending only on $A$ and $\eta$. Then, letting \[D \coloneqq \overline{B\left(A,r\right)}\sbs M ,\]
\eqref{eq:extensions_from_neighbourhoods}  yields that 
\begin{align}\label{Eq.10Aug26.1.Lemma}
    \left|{\overline{h_n}}_{D}(x)-{\overline{h_n}}_{A}(x)\right|\leq 2r \ \text{ and }\ \left|{\underline{h_n}}_{A}(x)-{\underline{h_n}}_{D}(x)\right|\leq 2r,
\end{align}
for all $x\in M$ {and all $n\in\N$},
and we have 
\begin{align}
    \label{eq:gap_size}
{\overline{h_n}}_{D}(x)-{\underline{h_n}}_{D}(x)
\geq \delta-4r > \frac{\delta}{2} 
\end{align}
for all $x\in U\cap M$ and all $n\in\N$. In particular, $U\cap D=\emptyset$.

We split the next part of the proof into two cases.

\begin{Case1} $\mathfrak{p}_1(\eta)\neq \mathfrak{p}_2(\eta)$.
\end{Case1}

As  $\mathfrak{p}_1(\eta)$ and $ \mathfrak{p}_2(\eta)$ are distinct and are in  $rM$,  we can pick disjoint open subsets $O,Q\sbs uM$ containing  $\mathfrak{p}_1(\eta)$ and $\mathfrak{p}_2(\eta)$, respectively, and $R>0$   such that 
\[d(\xi,0)\leq R \ \text{ for all } \ \xi\in O\cup Q.\] We can assume furthermore that
\[a \coloneqq d(O\cap M, Q\cap M)>0\]
(see Proposition~\ref{Lemma.AP_measures.sM.Basics}).  Let 
\[\eps\coloneqq \frac{1}{2}\min\{\delta,a\}\ \text{ and }\ r\coloneqq \frac{\eps}{40}.\]

Using that   $(\overline{h_n}_A)_n$
 is equicontinuous at  $\mathfrak p_1(\eta)$ and  $\mathfrak p_2(\eta)$,  we can pick open  $G, H\sbs uM$ containing   $\mathfrak{p}_1(\eta)$ and  $\mathfrak{p}_2(\eta)$, respectively, such that
\begin{equation}
    \label{eq:equisemicon}
\left|{\overline{h_n}}_{D}(x)-{\overline{h_n}}_{D}(\mathfrak{p}_1(\eta))\right|<5r,
\end{equation}
for all $n\in\N$ and all  $ x\in G\cap M$, and 
\begin{equation}
    \label{eq:equisemicon.2}
\left|{\overline{h_n}}_{D}(x)-{\overline{h_n}}_{D}(\mathfrak{p}_2(\eta))\right|<5r
\end{equation}
for all $n\in\N$ and all  $ x\in H\cap M$. Indeed, notice that the aforementioned equicontinuity gives \eqref{eq:equisemicon} and \eqref{eq:equisemicon.2} with $A$ replacing $D$   and with a better   $r$ instead of $5r$  (or any other positive number). Then, using \eqref{Eq.10Aug26.1.Lemma}, we can obtain    \eqref{eq:equisemicon} and \eqref{eq:equisemicon.2}.

With the open sets defined above, we now set 
\[V_1 \coloneqq U\cap O\cap G,\  V_2 \coloneqq Q\cap H , \text{ and }\ W \coloneqq \mathfrak{p}^{-1}(V_1\times V_2).\]
So, $V_1$ and $V_2$ are neighborhoods of $\mathfrak p_1(\eta)$ and $\mathfrak p_2(\eta)$ in $uM$, respectively, and  $W$ is a neighborhood of $\eta$ in $\beta \widetilde M$.
We now move towards defining $(f_n)_n$. Fix $n\in\N$ and suppose first that 
\begin{equation}
\label{eq:difference_in_coordiantes}
{\overline{h_n}}_{D}(\mathfrak{p}_1(\eta))-{\overline{h_n}}_{D}(\mathfrak{p}_2(\eta))>\frac{3\eps}{4}.
\end{equation}
Define the extension $g_n:(D\cup  V_1 \cup V_2)\cap M \to\R$ of the function $h_n\vert_{D}$ by letting
\[
g_n(x) \coloneqq \begin{cases}
    h_n(x),&x\in D,\\
    {\overline{h_n}}_{D}(\mathfrak{p}_1(\eta))-\frac{\varepsilon}{2},&x\in V_1\cap M ,\\
    {\overline{h_n}}_{D}(x),&x\in V_2 \cap M.\\
    \end{cases}
    \]
Notice that $g_n$ is well defined. Indeed, this follows immediately since  $V_1$ and $V_2$ are disjoint, ${\overline{h_n}}_{D}$ coincides with $h_n$ on $D$,   and $V_1\cap D=\emptyset$. 

Let us notice that  $\Lip(g_n)\leq 1$. As both functions $g_n\vert_{V_1\cap M }$ and $g_n\vert_{D\cup( V_2\cap M)}$ are clearly $1$-Lipschitz on their domains, it is enough to estimate $|g_n(x)-g_n(y)|$ for $x\in V_1\cap M$ and $y\in D\cup( V_2\cap M)$. Fix such $x$ and $y$. If  $y\in D$, then  
\begin{align*}
\nonumber
|g_n(x)-g_n(y)|=\left|{\overline{h_n}}_{D}(\mathfrak{p}_1(\eta))-\frac{\varepsilon}{2}-h_n(y)\right|.\end{align*}
If the term in the right-hand side above is positive, as $5r=\eps/8$,   \eqref{eq:equisemicon} gives
\begin{align*}\left|{\overline{h_n}}_{D}(\mathfrak{p}_1(\eta))-\frac{\varepsilon}{2}-h_n(y)\right|&={\overline{h_n}}_{D}(\mathfrak{p}_1(\eta))-\frac{\varepsilon}{2}-h_n(y)\\
&\leq  
{\overline{h_n}}_{D}(x)-h_n(y)-\frac{3\varepsilon}{8}\end{align*}
and, if this term is negative,  \eqref{eq:gap_size} and \eqref{eq:equisemicon} give
\begin{align*}\left|{\overline{h_n}}_{D}(\mathfrak{p}_1(\eta))-\frac{\varepsilon}{2}-h_n(y)\right|&=h_n(y)-{\overline{h_n}}_{D}(\mathfrak{p}_1(\eta))+\frac{\varepsilon}{2}\\
&\leq   
h_n(y)-{\overline{h_n}}_{D}(x)+\frac{5\varepsilon}{8}\\
&\leq h_n(y)-{\underline{h_n}}_{D}(x)-\frac{3\varepsilon}{8} . \end{align*} 
Hence, using again that $y\in D$,
\begin{align*} 
|g_n(x)-g_n(y)|&\leq \max
\left\{ {\overline{h_n}}_{D}(x)-h_n(y)-\frac{3\varepsilon}{8},h_n(y)-{\underline{h_n}}_{D}(x)-\frac{3\varepsilon}{8}\right\}\\  
&\leq \max\left\{ \Lip({\overline{h_n}}_{D}), \Lip({\underline{h_n}}_{D})\right\} d(x,y)\\
&\leq d(x,y).
\end{align*}
On the other hand, if   $y\in V_2\cap M$, then  \eqref{eq:equisemicon.2} and \eqref{eq:difference_in_coordiantes} give 
\[{\overline{h_n}}_{D}(\mathfrak{p}_1(\eta))-\frac{\varepsilon}{2}- {\overline{h_n}}_{D}(y)>\frac{3\varepsilon}{4}-\frac{\varepsilon}{2}-\frac{\varepsilon}{8}>0.\] Therefore,    \eqref{eq:equisemicon} gives
\begin{align*}
|g_n(x)-g_n(y)|
&={\overline{h_n}}_{D}(\mathfrak{p}_1(\eta))-\frac{\varepsilon}{2}- {\overline{h_n}}_{D}(y)\\
&\leq {\overline{h_n}}_{D}(x)-{\overline{h_n}}_{D}(y)-\frac{3\varepsilon}{8}\\
&\leq\Lip({\overline{h_n}}_{D})d(x,y)\\
&\leq d(x,y).
\end{align*}

As $g_n$ is $1$-Lipschitz, it can be extended to a $1$-Lipschitz function, say $f_n:M\to\R$. Note that $f_n$ coincides with $h_n$ on the neighborhood $D$ of $A$, satisfying item (1) of the lemma.

If $(x,y)\in W\cap \widetilde{M}$, then $x=\mathfrak{p}_1(x,y)\in V_1\cap M $ and $y=\mathfrak{p}_2(x,y)\in V_2\cap M $. Therefore, using \eqref{eq:equisemicon} once again, it follows from our choice of $R$ that 
\begin{align*}
\Phi f_n(x,y)&=\frac{f_n(x)-f_n(y)}{d(x,y)}\\
&=\frac{g_n(x)-g_n(y)}{d(x,y)}\\
&=\frac{{\overline{h_n}}_{D}(\mathfrak{p}_1(\eta))-\varepsilon/2- {\overline{h_n}}_{D}(y)}{d(x,y)}\\
&\leq\frac{{\overline{h_n}}_{D}(x)-3\varepsilon/8- {\overline{h_n}}_{D}(y)}{d(x,y)}\\
&\leq 1-\frac{3\varepsilon}{16R}.
\end{align*}
So, $\Phi f_n\leq 1-3\varepsilon/16R$ on $W$. 

Now that we have dealt with the assumption in \eqref{eq:difference_in_coordiantes}, suppose the contrary, i.e., suppose $n\in\N$ is such that 
\begin{equation}\label{eq:difference_in_coordiantes.2}{\overline{h_n}}_{D}(\mathfrak{p}_1(\eta))-{\overline{h_n}}_{D}(\mathfrak{p}_2(\eta))\leq \frac{3\varepsilon}{4}.\end{equation} 
 In this case, we simply define  
\[f_n \coloneqq {\overline{h_n}}_{D} \ \text{ on the whole } M.\] This is clearly a $1$-Lipschitz function agreeing with $h_n$ on $D$.
Additionally, from \eqref{eq:equisemicon}, \eqref{eq:difference_in_coordiantes.2}, {and the definition of $a$,} it follows that for any $(x,y)\in W\cap \widetilde{M}$, we have
\begin{align*}
\Phi f_n(x,y)&=\frac{f_n(x)-f_n(y)}{d(x,y)}\\
&=\frac{{\overline{h_n}}_{D}(x)-{\overline{h_n}}_{D}(y)}{d(x,y)}\\
&\leq\frac{{\overline{h_n}}_{D}(\mathfrak{p}_1(\eta))+\varepsilon/8- {\overline{h_n}}_{D}(\mathfrak{p}_2(\eta))+\varepsilon/8}{d(x,y)}\\ &\leq \frac{\varepsilon}{a}.
\end{align*}
The inequality $\Phi f_n\leq \varepsilon/a$ thus holds on the entire $W$.

Summing up, by our choice of $\eps$, 
\[\sup\{\Phi f_n(\tau)\colon\,n\in\N, \tau\in W\}\leq \max\left\{1-\frac{3\varepsilon}{16R},\frac{\varepsilon}{a}\right\}<1.\]

\begin{Case2}
$\mathfrak{p}_1(\eta)=\mathfrak{p}_2(\eta)$.
\end{Case2}
 
 In this case, we simply take \[\eps\coloneqq\frac{\delta}{2}\ \text{ and }\ r\coloneqq\frac{\eps}{40}.\]
 As in   Case 1,  find an open set $G\sbs uM $ containing $\mathfrak{p}_1(\eta)$ such that 
\begin{equation}
\label{eq:equisemicon_case2}
\left|{\overline{h_n}}_{D}(x)-{\overline{h_n}}_{D}(\mathfrak{p}_1(\eta))\right|<5r,
\end{equation}
for all $n\in\N$ and all $x\in G\cap M$. We then set
\[V \coloneqq U\cap G .\]

For each $n\in\N$, define $g_n:D\cup (V\cap M)\to\R$ by 
\[g_n(x) \coloneqq \begin{cases}
    h_n(x),&x\in D,\\
    {\overline{h_n}}_{D}(\mathfrak{p}_1(\eta))-\frac{\varepsilon}{2},&x\in V\cap M.
    \end{cases}
    \]
The map $g_n$ is well defined since $D\cap V=\emptyset$ and, proceeding exactly as in Case 1, we have that, for all  $x\in V\cap M$ and all  $y\in D$,
\begin{align*}
|g_n(x)-g_n(y)|&=\left|{\overline{h_n}}_{D}(\mathfrak{p}_1(\eta))-\frac{\varepsilon}{2}-h_n(y)\right|\\
&\leq \max\left\{{\overline{h_n}}_{D}(x)-h_n(y)-\frac{3\varepsilon}{8},h_n(y)-{\underline{h_n}}_{D}(x)-\frac{3\varepsilon}{8}\right\}\\
&\leq \max\left\{\Lip({\overline{h_n}}_{D}),\Lip({\underline{h_n}}_{D})\right\}d(x,y)\\
&\leq d(x,y).
\end{align*}
So, $g_n$ is $1$-Lipschitz. 

Extend $g_n$ to a $1$-Lipschitz function, say $f_n:M\to\R$. Then again in this case, $f_n$ agrees with $h_n$ on the neighborhood $D$ of $A$. Moreover, if we put \[W \coloneqq \mathfrak{p}^{-1}(V\times V),\] then $W$ is a neighborhood of $\eta$ in $\beta\widetilde{M}$ satisfying
\[\Phi f_n(\tau)=0\ \text{ for all }\ \tau\in W\ \text{ and all }\  n\in \N.\]
This concludes the proof of Case 2.
\\

This finishes the proof of the first statement of the lemma in the case $\mathfrak{p}_1(\eta)\notin \gex^{u}(A)$. As we briefly mentioned above, the proof follows similarly if   $\mathfrak{p}_2(\eta)\notin \gex^{u}(A)$. The differences lie in the fact that the map $\Phi f_n$ is not symmetric, so, some minor changes are necessary. Indeed,  we should replace   ${\overline{h_n}}_{D}$ and ${\overline{h_n}}_{D}(\mathfrak{p}_1(\eta))-\varepsilon/2$ with ${\underline{h_n}}_{D}$ and  ${\underline{h_n}}_{D}(\mathfrak{p}_2(\eta))+\varepsilon/2$, respectively, wherever necessary.

For the last statement of the lemma, assume that $M$ is proper and that $A$ is compact. In this case, Lemma~\ref{lem:gap function vs approximate intervals} implies that
\begin{equation}\label{Eq.10Aug26.2} \gex(A)=\{\xi\in uM\colon g_A(\xi)=0\}.
\end{equation}
The argument then proceeds exactly as above with \eqref{Eq.10Aug26.2} replacing 
\eqref{Eq.10Aug26.1}.
\end{proof}

\begin{proof}[Proof of Theorem~\ref{prop:shadow_almost_norm_attaining}]
By Proposition \ref{how support sets interact with change of base point}, for any $a\in A$, $T_{0,a}^*\varphi\in \mathfrak{S}(A)$, by Remark \ref{change of base point remarks} \eqref{change of base point remarks ii}, $\mathcal{M}_{op}(\varphi) = \mathcal{M}_{op}(T_{0,a}^*\varphi)$, and, by Remark \ref{change of base point remarks} \eqref{change of base point remarks iii} $T_{0,a}^*\varphi$ avoid infinity. Therefore, by perhaps considering $T_a^*\varphi$ instead of $\varphi$, we may assume, without loss of generality, that $0\in A$ and $\varphi\in\mathfrak{S}(A)$. Fix $\mu\in \mathcal M_{op}(\varphi)$. Define  
\begin{align*}
\mathcal{R}& \coloneqq \mathfrak{p}^{-1}(rM\times rM).
\end{align*}
So, $\mathcal{R}$  is the open and dense subset of $\beta\widetilde{M}$ consisting of all points in $\beta\widetilde{M}$ that are limits of bounded nets in $\widetilde{M}$. Since $\varphi$ avoids infinity, \cite[Corollary 2.12]{APS2} implies that \[\|\mu\|=\mu(\mathcal{R}),\]  
$\mu$ has no mass on subsets outside $\mathcal R$.{ Moreover, because $\mu$ is inner regular, it has no mass outside $\supp(\mu)$ either.} This implies 
\[\supp(\mu)\sbs \overline{\mathcal{R}\cap\supp(\mu)}^{\beta\widetilde{M}}.\] 
As $\gex^u(A)$ is closed in $uM$ and $\mathfrak p$ is continuous, we only need to prove that 
\begin{equation}
\label{eq:sufficient_inclusion}
\mathfrak{p}\left(\mathcal{R}\cap\supp (\mu)\right)\,\sbs\,\, \gex^{u}(A)\times \gex^{u}(A).
\end{equation}

In order to prove  \eqref{eq:sufficient_inclusion},  it is enough to pick $\eta\in \mathcal{R}$  such that either  $\mathfrak{p}_1(\eta)$ or $ \mathfrak{p}_2(\eta)$ is not in $\gex^{u}(A)$ and   show that   $\eta\notin\supp(\mu)$.   Fix such $\eta\in \mathcal{R}$.

We are left to notice the existence of a sequence $(f_n)_n$ as in Lemma \ref{lem:strategy_prop}. But this is an immediate consequence of the first conclusion of Lemma~\ref{Lemma.technical.prop}. Indeed, as we are under Assumption~\ref{Assumption_Pavlos}, fix  a sequence of functions $(h_n)_{n=1}^\infty$ as therein. Since this sequence satisfies the hypothesis of Lemma~\ref{Lemma.technical.prop} for $\eta$, this completes the proof.

The last statement of the proposition follows completely analogously with the second conclusion of Lemma~\ref{Lemma.technical.prop} being used instead of its first.
\end{proof}

\section{\texorpdfstring{$L$-projections on $\F(M)^{**}$}{-projections on}} \label{SectionLProjHer}

In this section we apply Theorem~\ref{prop:shadow_almost_norm_attaining} to the context of $L$-projections. Our main result is Theorem \ref{thm:complete_compact}. Besides using it to show that $\F(X)$ is not $L$-embedded in $\F(X)^{**}$ for all Banach spaces $X$ with dimension at least 2 in \S\ref{SectionnonLembBanach2}, this theorem is also used to show that, under certain conditions, $L$-embeddability of $\F(M)$ in its bidual passes to $L$-embeddability of $\F(A)$ in its bidual, where $A\sbs M$ (see \S\ref{Subsection.Hereditary}). In order to obtain these results, we will analyze what $L$-projections do to $\mathfrak D(A)$ and, more precisely, when they take $\mathfrak D(A)$ into $\F(A)$.

{Corollary \ref{cor:L_projection_shadow} below} shows that  Theorem~\ref{prop:shadow_almost_norm_attaining} holds in a broader sense. Precisely, we show that the conclusion of this proposition holds not only for functionals $\varphi$ as therein but also for images of such functionals under $L$-projections. {It is a consequence of the following result on non-expansion of supports of De Leeuw representing measures of functionals under $L$-projections:

\begin{proposition}
\label{prop:support_preservation_deLeeuw}
Let $(M,d,0)$ be a complete pointed metric space and let $P$ be an $L$-projection on $\F(M)^{**}$. If $\varphi\in \F(M)^{**}$ and $\nu\in\mathcal{M}_{op}(P\varphi)$, then there exists $\mu\in\mathcal{M}_{op}(\varphi)$ such that $\supp(\nu)\sbs \supp(\mu)$.
\end{proposition}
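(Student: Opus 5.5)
The plan is to build $\mu$ directly as a sum of optimal representations of the two $L$-pieces of $\varphi$. Write $\psi\coloneqq\varphi-P\varphi$. Since $P$ is an $L$-projection,
\[\|\varphi\|=\|P\varphi\|+\|\psi\|.\]
By \cite[Proposition 3]{Aliaga}, $\mathcal{M}_{op}(\psi)$ is non-empty, so we may pick some $\lambda\in\mathcal{M}_{op}(\psi)$. Then set
\[\mu\coloneqq\nu+\lambda.\]

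We check that $\mu\in\mathcal{M}_{op}(\varphi)$ by verifying the three defining conditions in \eqref{DefiMopVarphi}.
\begin{enumerate}
\item Positivity: $\mu\geq 0$, because $\nu\geq0$ and $\lambda\geq0$.
\item Representation: for every $f\in\Lip_0(M)$, linearity of the integral gives
\[\int_{\beta\widetilde M}\Phi f\,d\mu=\int_{\beta\widetilde M}\Phi f\,d\nu+\int_{\beta\widetilde M}\Phi f\,d\lambda=\langle f,P\varphi\rangle+\langle f,\psi\rangle=\langle f,\varphi\rangle.\]
\item Norm: because the measures are positive, total variation is additive, so
\[\|\mu\|=\mu(\beta\widetilde M)=\nu(\beta\widetilde M)+\lambda(\beta\widetilde M)=\|P\varphi\|+\|\psi\|=\|\varphi\|.\]
The last equality is exactly where the $L$-projection identity is used; this is the only input from $P$.
\end{enumerate}

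It remains to show $\supp(\nu)\sbs\supp(\mu)$. This is immediate from positivity. Take $x\in\supp(\nu)$ and any open $U\ni x$. Then $\mu(U)\geq\nu(U)>0$, so $x\in\supp(\mu)$.

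There is no real obstacle in this argument. The one point to keep in view is that both summands must be positive, so that total variation adds and the norm equality holds. This is why $\lambda$ is taken from $\mathcal{M}_{op}(\psi)$ rather than being an arbitrary representing measure for $\psi$.
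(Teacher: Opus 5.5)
Your proposal is correct and is essentially the paper's own proof. The paper likewise takes $\lambda\in\mathcal{M}_{op}((\Id-P)\varphi)$, sets $\mu=\nu+\lambda$, checks positivity, representation and the norm identity $\|\mu\|=\|P\varphi\|+\|(\Id-P)\varphi\|=\|\varphi\|$ via the $L$-projection, and reads off $\supp(\nu)\sbs\supp(\mu)$ from positivity. One small point: your closing remark credits positivity alone, but the norm step also uses the optimality condition $\|\lambda\|=\|(\Id-P)\varphi\|$, which is the other reason $\lambda$ must be taken from $\mathcal{M}_{op}$.
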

\begin{proof}
Let $\nu\in\mathcal{M}_{op}(P\varphi)$ and $\lambda\in\mathcal{M}_{op}((\Id-P)\varphi)$ -- as already noted in Definition~\eqref{DefiMopVarphi}, such measures always exist (see \cite[Proposition 3]{Aliaga}). We observe that
\[\mu \coloneqq \nu+\lambda\in\mathcal{M}_{op}(\varphi).\]
Indeed, clearly $\mu\geq 0$ and 
\[\langle f,\varphi\rangle=\langle f,P\varphi\rangle+\langle f,(\Id-P)\varphi\rangle=\int_{\beta\widetilde{M}}\Phi f\, d\nu+\int_{\beta\widetilde{M}}\Phi f\, d\lambda=\int_{\beta\widetilde{M}}\Phi f\, d\mu.\] Moreover, as these measures are positive and $P$ is an $L$-projection, we also have
\[\|\mu\|=\|\nu\|+\|\lambda\|=\|P\varphi\|+\|(\Id-P)\varphi\|=\|\varphi\|.\]
From the positivity of the measures again, it follows that $\supp(\nu)\sbs \supp(\mu)$.
\end{proof}}

\begin{corollary}
\label{cor:L_projection_shadow}
Let $(M,d,0)$ be a complete pointed metric space, $A\sbs M$ be closed and $\varphi\in\mathfrak{D}(A)$. Assume that Assumption~\ref{Assumption_Pavlos} holds and that $\varphi$ avoids infinity. Then, for all $L$-projections $P$ on $\F(M)^{**}$ and all $\nu\in\mathcal{M}_{op}(P\varphi)$, we have \[\mathfrak{p}(\supp(\nu))\sbs \gex^{u}(A)\times\gex^{u}(A).\]
Moreover, if $M$ is proper and $A$ is compact, then for any $\nu\in\mathcal{M}_{op}(P\varphi)$, we obtain \[\mathfrak{p}(\supp(\nu))\sbs \gex(A)\times\gex(A).\]
\end{corollary}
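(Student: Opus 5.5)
This follows by chaining Proposition~\ref{prop:support_preservation_deLeeuw} with Theorem~\ref{prop:shadow_almost_norm_attaining}. Fix an $L$-projection $P$ on $\F(M)^{**}$ and some $\nu\in\mathcal{M}_{op}(P\varphi)$. Proposition~\ref{prop:support_preservation_deLeeuw}, applied to $\varphi$ and $\nu$, gives a measure $\mu\in\mathcal{M}_{op}(\varphi)$ with $\supp(\nu)\sbs\supp(\mu)$. Concretely, $\mu=\nu+\lambda$ for some $\lambda\in\mathcal{M}_{op}((\Id-P)\varphi)$, and the $L$-projection identity is what makes $\mu$ optimal.

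Next we apply Theorem~\ref{prop:shadow_almost_norm_attaining} to $\varphi$ itself. The hypotheses of the corollary are exactly what that theorem requires: $A$ is closed, $\varphi\in\mathfrak{D}(A)$, Assumption~\ref{Assumption_Pavlos} holds, and $\varphi$ avoids infinity. The theorem applies to \emph{every} element of $\mathcal{M}_{op}(\varphi)$, so in particular to the $\mu$ above. It yields $\mathfrak{p}(\supp(\mu))\sbs\gex^u(A)\times\gex^u(A)$. Since $\mathfrak{p}$ is a map, the inclusion $\supp(\nu)\sbs\supp(\mu)$ gives
\[\mathfrak{p}(\supp(\nu))\sbs\mathfrak{p}(\supp(\mu))\sbs\gex^u(A)\times\gex^u(A).\]

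For the \emph{moreover} part, assume $M$ is proper and $A$ is compact. We use the same $\mu$ and invoke the second conclusion of Theorem~\ref{prop:shadow_almost_norm_attaining} in place of the first. This gives $\mathfrak{p}(\supp(\mu))\sbs\gex(A)\times\gex(A)$, and the same monotonicity of images under $\mathfrak{p}$ finishes the proof. In this case Assumption~\ref{Assumption_Pavlos} holds automatically by Proposition~\ref{prop:sufficient_for_assumption}, although it is already assumed anyway.

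There is no genuine obstacle here. The one point to check is that Theorem~\ref{prop:shadow_almost_norm_attaining} is applied to $\varphi$ and not to $P\varphi$. We do not know that $P\varphi$ satisfies Assumption~\ref{Assumption_Pavlos} or avoids infinity, and the argument never needs it. All information about $P\varphi$ is transferred to $\varphi$ through the support inclusion supplied by Proposition~\ref{prop:support_preservation_deLeeuw}.
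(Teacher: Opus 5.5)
Your proposal is correct and follows the paper's own proof. You take $\mu\in\mathcal{M}_{op}(\varphi)$ from Proposition~\ref{prop:support_preservation_deLeeuw} with $\supp(\nu)\sbs\supp(\mu)$, apply both conclusions of Theorem~\ref{prop:shadow_almost_norm_attaining} to $\varphi$ itself (rightly not to $P\varphi$), and finish by monotonicity of images under $\mathfrak{p}$.
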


\begin{proof}
Let $\nu\in\mathcal{M}_{op}(P\varphi)$ be arbitrary and let $\mu\in\mathcal{M}_{op}(\varphi)$ be given by Proposition \ref{prop:support_preservation_deLeeuw}. Then,
Theorem~\ref{prop:shadow_almost_norm_attaining} implies that 
\[
\mathfrak{p}(\supp (\nu))\sbs \mathfrak{p}(\supp (\mu))\sbs \gex^{u}(A)\times \gex^{u}(A)
\]
and, if moreover $M$ is proper and $A$ is compact, the same proposition gives
\[
\mathfrak{p}(\supp (\nu))\sbs \mathfrak{p}(\supp (\mu))\sbs \gex(A)\times \gex(A).
\]  
This finishes the proof.
\end{proof}

Corollary~\ref{cor:L_projection_shadow} allows us to show that, under suitable conditions on {$M$ and} $A\sbs M$, $L$-projections on $\F(M)^{**}$ leave $\mathfrak D(A)$ invariant. Geodesic closedness of $A$ will be essential for the next result. Later, we will show that if $P$ is an $L$-projection with image in $\F(M)$, then, under much weaker conditions on $A$, we can conclude furthermore that $P$ sends $\mathfrak D(A)$ into $\F(A)$ (see Theorem~\ref{thm:complete_compact}).
 
\begin{theorem}
\label{thm:L_projection_supports}
Let $(M,d,0)$ be a proper pointed metric space and $A\sbs M$ be compact and geodesically closed. Then, for any $L$-projection $P\colon \F(M)^{**}\to \F(M)^{**}$, we have  
\[P\left(\mathfrak{D}(A)\right)\sbs \mathfrak{D}(A).\]
\end{theorem}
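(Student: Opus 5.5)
We need to show: for $\varphi\in\mathfrak D(A)$, $P\varphi\in\mathfrak D(A)$. The plan is to prove the inclusion first for a dense (or weak$^*$-dense, norm-closed) class of good functionals and then extend. However, $P$ need not be weak$^*$ continuous, so the extension must go through norm density. Since $M$ is proper, Proposition~\ref{prop:sufficient_for_assumption}\eqref{prop:sufficient_for_assumption.Item2} shows that Assumption~\ref{Assumption_Pavlos} holds for every $\varphi\in\mathfrak D(A)$, so no density argument is needed there. The only remaining hypothesis of Corollary~\ref{cor:L_projection_shadow} is avoidance of infinity.

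\textbf{Reduction to avoiding infinity.} Given $\varphi\in\mathfrak D(A)$, write $\varphi=\varphi_M+\varphi_\infty$. Since $A$ is compact, pick $n$ with $A\sbs B(0,2^n)$. For $f$ constant on a neighborhood of $A$, $f\daleth_m$ is also constant near $A$ for $m\ge n$, which gives $W^*_{\daleth_m}\varphi\in\mathfrak D(A)$. Since $\mathfrak D(A)$ is norm closed, $\varphi_M\in\mathfrak D(A)$, and hence $\varphi_\infty\in\mathfrak D(A)$. Next I would show $\varphi_\infty=0$. Take $f$ that is $1$-Lipschitz and vanishes on a large ball. Then $f$ is constant on a neighborhood of $A$, so $\langle f,\varphi\rangle=0$ and $\langle f,\varphi_M\rangle=0$, which gives $\langle f,\varphi_\infty\rangle=0$. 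Since $\varphi_\infty$ is concentrated at infinity, it should be determined by its action on such functions. Indeed, $\langle f,\varphi_\infty\rangle=\langle f(1-\daleth_m),\varphi_\infty\rangle$, and $f(1-\daleth_m)$ vanishes near $A$ (up to the constant adjustment given by Proposition~\ref{prop:decomposition_S(A)}). So $\varphi_\infty=0$ and every $\varphi\in\mathfrak D(A)$ avoids infinity. I expect this step to need care with the base point, but it is routine.

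\textbf{Main step.} By Corollary~\ref{cor:L_projection_shadow} (proper case) and $\gex(A)=A$, every $\nu\in\mathcal M_{op}(P\varphi)$ satisfies $\mathfrak p(\supp\nu)\sbs A\times A$. Now let $f\in\Lip_0(M)$ be constant, say $f\equiv c$, on an open neighborhood $U$ of $A$. Then $\Phi f$ vanishes on $\widetilde M\cap (U\times U)$. The set $\mathfrak p^{-1}(\overline U^{uM}\times\overline U^{uM})$, or more precisely $\mathfrak p^{-1}$ of a product of open sets in $uM$ whose traces lie in $U$, is a neighborhood $N$ of $\supp\nu$ in $\beta\widetilde M$. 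By density of $\widetilde M$ and continuity of $\Phi f$, we get $\Phi f=0$ on $N$. Using that $A$ is compact and $M$ proper, $\overline{A}^{uM}=A$ has such a neighborhood whose trace on $M$ lies in $U$, by Proposition~\ref{Lemma.AP_measures.sM.Basics}. Hence $\langle f,P\varphi\rangle=\int\Phi f\,d\nu=0$, so $P\varphi\in\mathfrak D(A)$.

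The main obstacle I anticipate is the topological point that $\Phi f$ vanishes on an open set containing $\supp\nu$: points of $\beta\widetilde M$ over the diagonal of $A\times A$ are not limits of pairs with fixed separation, so one must use the open-set form of the neighborhood rather than values at the points themselves. Continuity of the extension of $\Phi f$ to $\beta\widetilde M$ handles this, since $\Phi f$ vanishes on the dense subset $\mathfrak p^{-1}(V\times V)\cap\widetilde M$ of the open set $\mathfrak p^{-1}(V\times V)$.
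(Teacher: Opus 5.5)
Your proposal is correct and follows the paper's proof: Assumption~\ref{Assumption_Pavlos} and avoidance of infinity feed into the proper case of Corollary~\ref{cor:L_projection_shadow}, giving $\mathfrak p(\supp\nu)\sbs A\times A$. Then $\Phi f$ vanishes there for any $f$ constant near $A$, so $\langle f,P\varphi\rangle=0$. Your explicit checks fill in details the paper only asserts: that $\varphi$ avoids infinity (more directly, $\langle f,\varphi-W^*_{\daleth_m}\varphi\rangle=\langle f(1-\daleth_m),\varphi\rangle=0$ gives $\varphi=W^*_{\daleth_m}\varphi$ for large $m$), and that $\Phi f$ vanishes on an open neighborhood of $\mathfrak p^{-1}(A\times A)$.
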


\begin{proof}
Let $\varphi\in\mathfrak{D}(A)$. As $A$ is compact, Assumption \ref{Assumption_Pavlos} holds by Proposition~\ref{prop:sufficient_for_assumption} and $\varphi$ avoids infinity. Since $M$ is proper and $\gex(A)=A$, Corollary~\ref{cor:L_projection_shadow} gives
that 
\[
\mathfrak{p}(\supp (\nu))\sbs \gex(A)\times \gex(A)=A\times A
\]
for all $\nu\in\mathcal{M}_{op}(P\varphi)$.
Therefore, if $f\in\Lip_0(M)$ is constant on some neighborhood of $A$, then $\Phi f=0$ on $\mathfrak{p}^{-1}(A\times A)$ and, for $\nu \in \mathcal{M}_{op}(P\varphi)$, \[\langle f,P\varphi\rangle=\int_{\mathfrak{p}^{-1}(A\times A)}\Phi f\,d\nu=0.\] Thus, $P\varphi\in\mathfrak{D}(A)$.
\end{proof}

We shall now drop the conditions { of properness of $M$ and} geodesic closedness of $A\sbs M$ and explore further consequences of  Corollary~\ref{cor:L_projection_shadow}. We still assume $A$ to be compact so that Assumption~\ref{Assumption_Pavlos} holds more broadly (see Proposition~\ref{prop:sufficient_for_assumption}) and  Corollary~\ref{cor:L_projection_shadow} applies. We also only consider $L$-projections with image in $\F(M)$, which yields $\gex(A)$ instead of $\gex^u(A)$ in the conclusion.

\begin{lemma}
\label{lem:proper_compact_gex}
Let $(M,d,0)$ be a complete pointed metric space, $A\sbs M$ a non-empty compact subset and let $\varphi\in\mathfrak{D}(A)$. If $P:\F(M)^{**}\to\F(M)$ is an $L$-projection, then $P\varphi\in\F(\gex(A)\cup\{0\})$. 
\end{lemma}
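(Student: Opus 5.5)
Since $A$ is compact, Proposition~\ref{prop:sufficient_for_assumption}\eqref{prop:sufficient_for_assumption.Item4} shows that $\varphi$ satisfies Assumption~\ref{Assumption_Pavlos}. Moreover $\varphi$ avoids infinity: $A$ is bounded, so for $n$ large $\daleth_n$ equals $1$ on a neighborhood of $A$. The plan is to combine Corollary~\ref{cor:L_projection_shadow} with \cite[Lemma 8]{Aliaga}, and to use $P\varphi\in\F(M)$ to get rid of the points of $\gex^u(A)\setminus M$.

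First I would check that $\varphi$ avoids infinity. For $f\in\Lip_0(M)$ the function $f-f\daleth_n$ vanishes on a neighborhood of $A$. If $0\in A$ this gives $\langle f,\varphi-W^*_{\daleth_n}\varphi\rangle=0$ directly. Otherwise we change base point to some $a\in A$ using Proposition~\ref{how support sets interact with change of base point} and Remark~\ref{change of base point remarks}\eqref{change of base point remarks iii}. Then Corollary~\ref{cor:L_projection_shadow} gives, for every $\nu\in\mathcal M_{op}(P\varphi)$,
\[\mathfrak p(\supp(\nu))\sbs \gex^u(A)\times\gex^u(A).\]

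Next set $m:=P\varphi\in\F(M)$ and $K:=\gex(A)\cup\{0\}$, which is closed. By Proposition~\ref{prop.APPP.2.6} it suffices to show that $\langle f,m\rangle=0$ for every $f\in\Lip_0(M)$ with $f|_K=0$. Since $m$ is weak$^*$ continuous, \cite[Lemma 8]{Aliaga} says that the support of $m$ is contained in the "shadow" of $\supp(\nu)$. Concretely, I would expect $\supp(m)\sbs \overline{\mathfrak p_1(\supp\nu)\cup\mathfrak p_2(\supp\nu)}^{uM}\cap M$, possibly together with $0$. Combining this with the display above yields $\supp(m)\sbs \gex^u(A)\cap M=\gex(A)$, which is what we want.

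The main obstacle is this last step: transferring the measure-theoretic support to the support of $m$, and in particular handling the points of $\gex^u(A)\setminus M$. If \cite[Lemma 8]{Aliaga} is only available in the extended-support form, i.e. $\mathcal S(m)\sbs\mathfrak p_1(\supp\nu)\cup\mathfrak p_2(\supp\nu)$ in $uM$, then I would argue as follows. For a molecule-type element $m\in\F(M)$, the extended support $\mathcal S(m)$ is the $uM$-closure of $\supp(m)$ intersected with $M$; this should follow from \eqref{eq:S(A)_in_F(M)}. Since $\gex^u(A)\cap M=\gex(A)$, we conclude $\supp(m)\sbs\gex(A)$ and hence $m\in\F(\gex(A)\cup\{0\})$.
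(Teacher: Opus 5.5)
Your proposal is correct and is essentially the paper's proof: compactness of $A$ gives Assumption~\ref{Assumption_Pavlos} and shows that $\varphi$ avoids infinity. Corollary~\ref{cor:L_projection_shadow} then puts $\mathfrak p(\supp\nu)$ inside $\gex^u(A)\times\gex^u(A)$ for every $\nu\in\mathcal M_{op}(P\varphi)$, and Lemma~\ref{Lemma.Aliaga.8}, together with $\gex^u(A)\cap M=\gex(A)$ and Proposition~\ref{prop.APPP.2.6}, finishes the argument. The hedging in your last step is unnecessary, because Lemma~\ref{Lemma.Aliaga.8} already states $\supp(m)\sbs\mathfrak p_1(\supp\nu)\cup\mathfrak p_2(\supp\nu)$, with compact images so no closure is needed; you can drop the extended-support fallback, and since $\mathfrak D(A)\sbs\mathfrak S(A)$, the change of base point in your avoidance-of-infinity check is not needed either.
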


The proof of Lemma \ref{lem:proper_compact_gex}  makes use of a lemma by  Aliaga. Since this lemma will be used repeatedly below, we provide its statement for the reader's convenience.

\begin{lemma}\emph{(}\cite[Lemma 8]{Aliaga}\emph{).}
Let $(M,d,0)$ be a complete pointed metric space, $m \in \F(M)$, and  $\mu\in\mathcal M(\beta\widetilde  M)$ be such that $\Phi^*\mu  = m$. Then \[\supp(m) \sbs \mathfrak p_1(\supp(\mu))\cup \mathfrak p_2(\supp(\mu)).\]\label{Lemma.Aliaga.8}
\end{lemma}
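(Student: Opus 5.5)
The plan is to argue pointwise. Fix $x\in\supp(m)$ and suppose, towards a contradiction, that $x\notin \mathfrak p_1(\supp(\mu))\cup\mathfrak p_2(\supp(\mu))$. The argument has two halves:
\begin{itemize}
\item a localization step, showing that membership in $\supp(m)$ is detected by Lipschitz functions supported in arbitrarily small balls around $x$;
\item a separation step in $uM$, showing that every such function with sufficiently small support integrates to $0$ against $\mu$.
\end{itemize}
Throughout I use $\supp(\mu)$ for the support of $|\mu|$, since $\mu$ need not be positive.

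\emph{Localization.} The claim is that for every $r>0$ there is $g\in\Lip_0(M)$ vanishing outside $B(x,2r)$ with $\langle g,m\rangle\neq0$.
\begin{itemize}
\item By minimality of $\supp(m)$ (Theorem \ref{Theorem.APPP.2.1} and the definition following it), $m\notin\F((\supp(m)\setminus B(x,r))\cup\{0\})$. If that set is empty, read it as $\{0\}$.
\item Proposition \ref{prop.APPP.2.6} then gives $f\in\Lip_0(M)$ with $f=0$ on $\supp(m)\setminus B(x,r)$ and $\langle f,m\rangle\neq0$.
\item Take a Lipschitz cutoff $h$ with $h=1$ on $B(x,r)$, $h=0$ off $B(x,2r)$, and $0\le h\le 1$. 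Then $W_hf=fh$ lies in $\Lip_0(M)$ by the bound on $\|W_h\|$ recalled before \eqref{eq:daleth}, and hence so does $f(1-h)=f-fh$.
\item The function $f(1-h)$ vanishes on all of $\supp(m)$: on $\supp(m)\cap B(x,r)$ because $1-h=0$, and elsewhere on $\supp(m)$ because $f=0$.
\item By Proposition \ref{prop.APPP.2.6} again, $\langle f(1-h),m\rangle=0$. Hence $g:=fh$ satisfies $\langle g,m\rangle=\langle f,m\rangle\neq0$.
\end{itemize}

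\emph{Separation.} The set $C:=\mathfrak p_1(\supp(\mu))\cup\mathfrak p_2(\supp(\mu))$ is compact in $uM$, being the image of a compact set under continuous maps, and by assumption it does not contain $x$.
\begin{itemize}
\item Proposition \ref{Lemma.AP_measures.sM.Basics}(2), applied to the disjoint closed sets $\{x\}$ and $C$, yields open sets $U\ni x$ and $V\supseteq C$ in $uM$ with $d(U\cap M,V\cap M)>0$.
\item Choose $r>0$ so small that $B(x,2r)\sbs U\cap M$. Let $g$ be the function from the localization step.
\item Then $g\equiv0$ on $V\cap M$. Hence $\Phi g(y,z)=0$ for every $(y,z)\in\widetilde M$ with $y,z\in V$.
\item The set $\mathfrak p^{-1}(V\times V)$ is an open neighbourhood of $\supp(\mu)$ in $\beta\widetilde M$, and $\widetilde M$ is dense in it. By continuity of the extension of $\Phi g$, we get $\Phi g=0$ on $\mathfrak p^{-1}(V\times V)$.
\item Since $|\mu|$ carries no mass outside $\supp(\mu)$, this gives $\langle g,m\rangle=\int_{\beta\widetilde M}\Phi g\,d\mu=0$, which contradicts the choice of $g$.
\end{itemize}

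\emph{Main obstacle.} The delicate point is the localization step, that is, turning the global characterization of $\supp(m)$ from Proposition \ref{prop.APPP.2.6} into a local one. A direct attempt fails: take $f$ vanishing on the closed set $C\cap M$ and try to show $\Phi f=0$ on $\supp(\mu)$. This breaks down because $C$ may contain points of $uM\setminus M$ at which the extension of $f$ is nonzero. Working near a single point $x$ of $M$ avoids this, since $uM$ separates $x$ from the compact set $C$ at positive distance. The only technical care needed is that $f(1-h)$ be Lipschitz when $f$ is unbounded, and this is handled by the weighting operator $W_h$.
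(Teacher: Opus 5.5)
Your proof is correct. The paper gives no argument of its own for this statement: it quotes it from \cite{Aliaga}, stating it only "for the reader's convenience". So there is no internal proof to compare against, and what you wrote is a complete, self-contained proof along the natural lines.

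The localization step uses Proposition~\ref{prop.APPP.2.6} twice, once to produce $f$ and once to kill $f(1-h)$. This correctly shows that for every $r>0$, $x\in\supp(m)$ yields some $g\in\Lip_0(M)$ vanishing off $B(x,2r)$ with $\langle g,m\rangle\neq 0$. The degenerate case $\supp(m)\setminus B(x,r)=\emptyset$ is also handled properly. In the separation step, density of $\widetilde M$ and continuity of $\Phi g$ correctly give $\Phi g\equiv 0$ on the open set $\mathfrak p^{-1}(V\times V)\supseteq\supp(\mu)$. Inner regularity of $|\mu|$ then finishes the argument.

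Two minor comments.

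\begin{enumerate}
\item You do not need the positive-distance conclusion of Proposition~\ref{Lemma.AP_measures.sM.Basics}(2). Any disjoint open sets $U\ni x$ and $V\supseteq C$ suffice, and these exist simply because $uM$ is compact Hausdorff. Once $B(x,2r)\sbs U\cap M$, you already get $g\equiv 0$ on $V\cap M$.
\item Your explanation of why the naive approach fails is incomplete. The main problem is not the points of $uM\setminus M$ but the points of $\beta\widetilde M$ lying over the diagonal of $C$. For example, take $M=\R$, $C=\{0\}$ and $f(t)=t$: $f$ vanishes on $C$, yet $\Phi f=1$ at every cluster point of $((1/n,0))_n$, and these points lie in $\mathfrak p^{-1}(C\times C)$. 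This is exactly why $g$ must vanish on a whole neighbourhood of $C$, which your separation step arranges.
\end{enumerate}
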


\begin{proof}[Proof of Lemma \ref{lem:proper_compact_gex}] 
By the compactness of $A$, the functional $\varphi$ must avoid infinity, and we may therefore apply Corollary~\ref{cor:L_projection_shadow} to obtain that
\begin{equation}\label{Eq.11Aug26.1tilde}\mathfrak{p}(\supp(\nu))\sbs\gex^{u}(A)\times\gex^{u}(A)\ \text{ for all }\   \nu\in\mathcal{M}_{op}(\varphi).
\end{equation}
Lemma \ref{Lemma.Aliaga.8} gives  that 
\[\supp (P\varphi) \sbs (\mathfrak{p}_1(\supp( \nu))\cup\mathfrak{p}_2(\supp (\nu)))\cap M.\] By \eqref{Eq.11Aug26.1tilde}, we conclude that 
\begin{align*}
\supp (P\varphi) \sbs  \gex^{u}(A)\cap M
=\gex(A)
\end{align*}
and hence  \begin{align}\label{Eq.11.Aug.26.4tilde}
P\varphi\in\F(\gex(A)\cup\{0\}).
\end{align}
\end{proof}

We now prove one of our main results about support preservation. The next theorem shows that Lemma~\ref{lem:proper_compact_gex} can be strengthened so that the resulting element $P\varphi$ lies in $\F(A)$ and not only in $\F(\gex(A)\cup\{0\})$. The basic idea behind the improvement is to use the compactness of $A$ for decomposing functionals into functionals with supports so small that their geodesic extensions lie in an arbitrarily small neighborhood of $A$. Notice that, for $L$-projections with image in $\F(M)$, this theorem also strengthens Theorem~\ref{thm:L_projection_supports} above.

\begin{theorem}
\label{thm:complete_compact}
Let $(M,d,0)$ be a complete pointed metric space and $A\sbs M$ be a non-empty compact subset. If $P:\F(M)^{**}\to\F(M)$ is an $L$-projection, then \[P(\mathfrak S(A))\sbs\F(A\cup \{0\})\text{ and }P(\mathfrak D(A))\sbs\F(A).\]   
\end{theorem}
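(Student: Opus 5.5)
First reduce to the $\mathfrak D$ statement. Since $\mathfrak S(A)=\R\delta(x)\oplus\mathfrak D(A)$ for any $x\in A$ when $0\notin A$ (Proposition~\ref{prop:decomposition_S(A)}), and $\mathfrak S(A)=\mathfrak D(A)$ when $0\in A$, and since $P\delta(x)=\delta(x)\in\F(A\cup\{0\})$, it suffices to prove $P(\mathfrak D(A))\sbs\F(A)$. Moreover, $\F(A)=\F(A\cup\{0\})\cap\mathfrak D(A)$ holds in $\F(M)$: for $0\in A$ this is \eqref{eq:S(A)_in_F(M)}, and in general we can change the base point to some $a\in A$ via Proposition~\ref{how support sets interact with change of base point}. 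So it is enough to show $\supp(P\varphi)\sbs A$ for $\varphi\in\mathfrak D(A)$, together with $P\varphi\in\mathfrak D(A)$. The latter follows from the former: if $\supp(P\varphi)\sbs A$, then $P\varphi\in\F(A\cup\{0\})$, and $P\varphi=c\delta(x)+m$ with $m\in\F(A)$. Testing against $f\in\Lip_0(M)$ with $f=1$ near $A$ and $f(0)=0$ should force $c=0$, because $\langle f,\varphi\rangle=0$ and $\langle f,(\Id-P)\varphi\rangle$ ought to vanish as well. For that second vanishing I would instead run the whole argument with base point moved into $A$ (Remark~\ref{change of base point remarks}), so that $\mathfrak D(A)=\mathfrak S(A)$ and the support conclusion gives $P\varphi\in\F(A)$ directly.

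The main step is a localization. Fix $\varphi\in\mathfrak D(A)$, with the base point $a\in A$, and fix $\eps>0$. By compactness, cover $A$ by finitely many balls $B(a_i,\eps)$, $i=1,\dots,k$, with $a_i\in A$. Take a Lipschitz partition of unity $\psi_1,\dots,\psi_k$ that sums to $1$ on a neighborhood of $A$ and has $\supp\psi_i\sbs B(a_i,2\eps)$. Set $\varphi_i:=W^*_{\psi_i}\varphi$. Each $\psi_i$ has bounded support, so $W_{\psi_i}$ is $w^*$-continuous and bounded. If $f$ vanishes near $A$, then $\sum_i f\psi_i$ vanishes near $A$, which gives $\varphi=\sum_i\varphi_i$ up to the base-point correction. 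Let $A_i:=\overline{B(a_i,2\eps)}\cap A'$, where $A'$ is a compact set to be chosen. To keep the pieces compactly supported, I would instead take $A_i:=A\cap\overline{B(a_i,2\eps)}$, which is compact. I would then argue that $\varphi_i\in\mathfrak D(A_i\cup\{a\})$, or better $\varphi_i\in\mathfrak S_a(A_i)$ after a base-point shift to $a_i$. The reason is that if $f$ vanishes on a neighborhood of $A_i$, then $f\psi_i$ vanishes on a neighborhood of $A$: near points of $A$ outside $\supp\psi_i$ this is automatic, and near $A_i$ it follows from the hypothesis on $f$. This step needs care with base points and with the fact that $W_{\psi_i}$ requires $f(0)=0$.

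Now apply Lemma~\ref{lem:proper_compact_gex} to each compact $A_i$. It gives $\supp(P\varphi_i)\sbs\gex(A_i)\cup\{\text{base point}\}$. Since $\diam A_i\le 4\eps$, every $z\in[x,y]$ with $x,y\in A_i$ satisfies $d(z,x)\le 4\eps$, so $\gex(A_i)\sbs\overline{B(A,4\eps)}$. By linearity, $P\varphi=\sum_iP\varphi_i\in\F(\overline{B(A,4\eps)}\cup\{0\})$. Intersecting over $\eps>0$ with Theorem~\ref{Theorem.APPP.2.1} yields $P\varphi\in\F(A\cup\{0\})$. With the base point in $A$, this means $P\varphi\in\F(A)$.

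The main obstacle I expect is the localization bookkeeping. One issue is verifying that $W^*_{\psi_i}\varphi$ genuinely lies in $\mathfrak D$ or $\mathfrak S$ of the small compact $A_i$ for the right base point, given that $W_h$ is defined on $\Lip_0$ and pointwise multiplication does not commute with the normalization $f\mapsto f-f(0)$. The other is checking that $\sum_i\varphi_i=\varphi$, which uses that $1-\sum_i\psi_i$ vanishes on a neighborhood of $A$ and that $\varphi\in\mathfrak S(A)$. My first attempt would be to place the base point at $a\in A$, take the $\psi_i$ so that each $\psi_i$ equals $1$ or $0$ near $a$, and track the constants explicitly.
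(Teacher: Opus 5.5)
Your argument follows the paper's proof. You use a Lipschitz partition of unity subordinate to $2\eps$-balls, localize $\varphi$ to the compact pieces $A_i=A\cap\overline{B(a_i,2\eps)}$, and apply Lemma~\ref{lem:proper_compact_gex} to each piece. Then $\diam A_i\le 4\eps$ keeps $\gex(A_i)$ near $A$, Theorem~\ref{Theorem.APPP.2.1} intersects over $\eps$, and a base-point change passes between $\mathfrak S(A)$ and $\mathfrak D(A)$.

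Where you differ, your route is more elementary. You check directly that $W^*_{\psi_i}\varphi\in\mathfrak S(A_i)$: if $f$ vanishes on an open $U\supseteq A_i$, then $f\psi_i$ vanishes on the open set $U\cup(M\setminus\overline{B(a_i,2\eps)})$, which contains $A$. This is correct. It replaces the paper's Step~2, which goes through Proposition~\ref{prop:intersection} and hence through the minimal-support theorem \cite[Theorem 3.13]{AP_measures}, and it needs neither compactness nor avoidance of infinity. The bookkeeping you fear is not an issue. $W_h$ maps $\Lip_0(M)$ into itself for any base point. Also $\varphi=\sum_iW^*_{\psi_i}\varphi$ holds exactly for every $\varphi\in\mathfrak S(A)$, since $f(1-\sum_i\psi_i)\in\Lip_0(M)$ vanishes near $A$.

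The one step not correctly justified is the application of Lemma~\ref{lem:proper_compact_gex}. Its hypothesis is $\varphi_i\in\mathfrak D(A_i)$, but you only know $\varphi_i\in\mathfrak S(A_i)$ for the base point $a$, which typically lies outside $A_i$. For instance, let $\varphi=\delta(x)$ with $x\in A$ and $d(x,a)>4\eps$. Then every nonzero piece is a nonzero multiple of $\delta(x)$, and none lies in $\mathfrak D(A_i)$.

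Neither of your remedies works:
\begin{itemize}
\item A base-point shift to $a_i$ cannot help, because $\mathfrak D(\cdot)$ is base-point invariant (Proposition~\ref{how support sets interact with change of base point}).
\item Passing to $\mathfrak D(A_i\cup\{a\})$ makes the lemma return $\gex(A_i\cup\{a\})$. That set contains the metric segments $[a,z]$, $z\in A_i$, which need not stay near $A$, so the $\eps$-argument collapses.
\end{itemize}

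The correct bridge is the decomposition you already used in your first paragraph. When $a\notin A_i$, Proposition~\ref{prop:decomposition_S(A)} gives $\varphi_i=\lambda_i\delta(z)+\psi_i$ with $z\in A_i$ and $\psi_i\in\mathfrak D(A_i)$. Apply the lemma to $\psi_i$ and use $P\delta(z)=\delta(z)$. This is exactly the paper's Step~3, and it gives your stated conclusion $P\varphi_i\in\F(\gex(A_i)\cup\{a\})$.
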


In the proof of Theorem~\ref{thm:complete_compact}, we will use the following observation.

\begin{proposition}
\label{prop:intersection}
Let  $(M,d,0)$ be a complete pointed metric space, $A\sbs M$ be a compact subset, and $B\sbs M$ be closed. Then, 
\[\mathfrak{S}(A)\cap\mathfrak{S}(B)=\mathfrak{S}(A\cap B).\]
\end{proposition}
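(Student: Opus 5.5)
The inclusion $\mathfrak{S}(A\cap B)\sbs\mathfrak{S}(A)\cap\mathfrak{S}(B)$ is immediate. If $f\in\Lip_0(M)$ vanishes on a neighborhood of $A$ (or of $B$), it vanishes on a neighborhood of $A\cap B$, so any $\varphi\in\mathfrak{S}(A\cap B)$ kills it. The work is the reverse inclusion.

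Fix $\varphi\in\mathfrak{S}(A)\cap\mathfrak{S}(B)$ and $f\in\Lip_0(M)$ vanishing on an open set $U\supseteq A\cap B$. The plan is to write $f=f_1+f_2$ with $f_1,f_2\in\Lip_0(M)$, where $f_1$ vanishes on a neighborhood of $A$ and $f_2$ vanishes on a neighborhood of $B$. Then $\langle f,\varphi\rangle=0$ follows.

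\textbf{Separating $A$ from $B$ away from $U$.} Since $A$ is compact, $A\setminus U$ is compact and disjoint from the closed set $B$. Hence $\rho:=d(A\setminus U,B)>0$. By compactness of $A$ and $A\cap B\sbs U$, there is $s>0$ with $B(A\cap B,2s)\sbs U$ if $A\cap B\neq\emptyset$; shrink $s$ further so that $s<\rho/4$. I claim that $B(A,s)\cap B(B,s)\sbs U$. Indeed, if $z$ is within $s$ of $a\in A$ and of $b\in B$, then $d(a,B)<2s<\rho$, so $a\in U$. A compactness/contradiction argument (sequences $a_n\in A$ with $d(a_n,B)\to0$ accumulate at a point of $A\cap B$) upgrades this to $z\in B(A\cap B,2s)\sbs U$ for $s$ small enough. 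This step, choosing $s$ uniformly, is where compactness of $A$ is essential. It is the only delicate point; if it seems shaky, I would instead work with neighborhoods $A_s=\overline{B(A,s)}$ and just need $A_s\cap B_s\sbs U$, which is what the contradiction argument gives.

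\textbf{Defining the decomposition.} Let $\psi(x):=\min\{1,\,d(x,A)/s\}$. This is Lipschitz, bounded, vanishes on $A$, and equals $1$ outside $B(A,s)$. Set
\[
f_1:=f\psi,\qquad f_2:=f(1-\psi).
\]
Then $f_1$ vanishes on $A$, and I need it to vanish on a neighborhood of $A$. To achieve this, I will instead use $\psi(x)=\min\{1,\max\{0,(2d(x,A)-s)/s\}\}$, which vanishes on $B(A,s/2)$ and equals $1$ outside $B(A,s)$. Next, $f_2=f(1-\psi)$ is supported in $\overline{B(A,s)}$. On $B(A,s)\cap B(B,s)\sbs U$ we have $f=0$, so $f_2$ vanishes on $B(B,s)$, which is a neighborhood of $B$.

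\textbf{Lipschitz membership.} It remains to check that $f_1,f_2$ are Lipschitz and vanish at $0$. Since $1-\psi$ has bounded support (as $A$ is bounded) and is Lipschitz, $f_2=W_{1-\psi}f\in\Lip_0(M)$ by the weighting operator estimate. Then $f_1=f-f_2\in\Lip_0(M)$ as well. Therefore
\[
\langle f,\varphi\rangle=\langle f_1,\varphi\rangle+\langle f_2,\varphi\rangle=0,
\]
where the first term vanishes because $\varphi\in\mathfrak{S}(A)$ and the second because $\varphi\in\mathfrak{S}(B)$. Since $f$ was an arbitrary function in $\Lip_0(M)$ vanishing on a neighborhood of $A\cap B$, this gives $\varphi\in\mathfrak{S}(A\cap B)$.

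When $A\cap B=\emptyset$, the same argument applies with $U=\emptyset$: we have $d(A,B)>0$ by compactness, and choosing $s<d(A,B)/2$ gives $B(A,s)\cap B(B,s)=\emptyset$, so the conclusion is $\varphi\in\mathfrak{S}(\emptyset)$.
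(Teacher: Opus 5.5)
Your argument is correct, but it takes a different route from the paper.

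\textbf{The paper's route.} The paper gets the inclusion $\mathfrak{S}(A)\cap\mathfrak{S}(B)\sbs\mathfrak{S}(A\cap B)$ from the extended support $\mathcal{S}(\varphi)\sbs uM$ of \cite[Theorem 3.13]{AP_measures}. Since $A$ is bounded, $\varphi$ avoids infinity, so it has a smallest closed support $\mathcal{S}(\varphi)$ in the uniform compactification. Membership in $\mathfrak{S}(A)$ and $\mathfrak{S}(B)$ forces $\mathcal{S}(\varphi)\sbs\overline{A}^{uM}\cap\overline{B}^{uM}=A\cap B$, where compactness gives $\overline{A}^{uM}=A$. The minimality property of $\mathcal{S}(\varphi)$ then yields $\varphi\in\mathfrak{S}(A\cap B)$.

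\textbf{Your route.} You stay entirely on the predual side. You split any $f$ vanishing near $A\cap B$ as $f\psi+f(1-\psi)$, with a cutoff $\psi$ built from $d(\cdot,A)$. Compactness is used only to get a uniform $s$ with $B(A,s)\cap B(B,s)\sbs U$, and boundedness of $A$ only to ensure $f(1-\psi)\in\Lip_0(M)$ via the weighting operator.

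\textbf{Comparison.} Your argument is elementary and self-contained: it needs neither the existence of a minimal support in $uM$ nor avoidance of infinity. The cost is a bit more bookkeeping. The paper's version is a two-line consequence once the support theory of \cite{AP_measures} is available, and it fits the support-theoretic viewpoint used elsewhere in the paper.

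\textbf{One small slip.} The intermediate assertion that $z\in B(A\cap B,2s)$ is false in general when $A$ and $B$ meet tangentially. Take $A=\{(t,t^2):|t|\leq 1\}$ and $B=\R\times\{0\}$ in $\R^2$. The point $(\sqrt{s}/2,s/4)$ lies in $A$ and within $s$ of $B$. Yet its distance to $A\cap B=\{(0,0)\}$ is at least $\sqrt{s}/2$, which exceeds $2s$ once $s<1/16$.

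This does no harm. Your sequential compactness argument already shows directly that $B(A,s)\cap B(B,s)\sbs U$ for all sufficiently small $s>0$, and that is the only fact used later. So you can simply drop the preliminary choices of $\rho$ and of $s$ with $B(A\cap B,2s)\sbs U$.
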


\begin{proof}
As the inclusion ``$\sps$'' is immediate, we only prove the inclusion ``$\sbs$''. Let $\varphi\in \mathfrak{S}(A)\cap\mathfrak{S}(B)$. Since $A$ is a bounded set, $\varphi$ avoids infinity, and by \cite[Theorem 3.13]{AP_measures} there exists a smallest (in the sense of inclusion) closed subset $\mathcal{S}(\varphi)$ of $uM$ with the property that $\langle f,\varphi\rangle=0$ for any $f\in\Lip_0(M)$ with $f\vert_{U\cap M}=0$ for some neighborhood $U\sbs uM$ of $\mathcal{S}(\varphi)$. Namely, it is the set 
\[ \mathcal{S}(\varphi)=\bigcap\left\{\overline{C}^{uM}\colon C\sbs M \ \text{ and }\ \langle f,\varphi\rangle= 0\text{ for all }\ f\in \Lip_0(M)\text{ with }f\vert_C=0\right\}\]
as previously defined in \S\ref{ss:results}. The formula of $\mathcal{S}(\varphi)$ together with the fact that $A$ is compact and $B$ closed in $M$ implies that 
\[\mathcal{S}(\varphi)\sbs A\cap \overline{B}^{uM}=A\cap B.\]
In particular, by the property of $\mathcal{S}(\varphi)$ described above, $\varphi\in\mathfrak{S}(A\cap B)$. 
\end{proof}

\begin{proof}[Proof of Theorem~\ref{thm:complete_compact}]
Let $\varphi\in \mathfrak S(A)$ and fix an auxiliary $\eps\in (0,1)$. {For better readability, we split the proof into four steps.}
  
{\begin{Step1} Decomposition of $\varphi$. \end{Step1}}
Since $A$ is compact, we can find an $n\in\N$ and points $(x_i)_{i=1}^n\sbs A$ such that $A\sbs \bigcup_{i=1}^nB(x_i,\eps)$; write $B \coloneqq \bigcup_{i=1}^nB(x_i,\eps)$. For each $i\in\{1,\ldots,n\}$, define $g_i:M\to[0,1]$ by letting 
\[g_i(x) \coloneqq \min\left\{1,d(x,M\setminus B(x_i,2\eps))\right\}\ \text{ for all }\   x\in M.\] 
Then each $g_i$ is Lipschitz and  $\supp(g_i)\sbs B_i$, where $B_i \coloneqq \overline{B(x_i,2\eps)} $.  Moreover,    for every $x\in B$, there exists a $j\in\{1,\ldots,n\}$ such that $g_j(x)\geq\eps$. For each $i\in\{1,\ldots,n\}$, let
\[
h_i(x) \coloneqq \left\{\begin{array}{ll}
   \frac{g_i(x)}{\sum_{k=1}^ng_k(x)},&x\in B,\\
   0,&x\in M\setminus B(x_i,2\eps).
\end{array}\right.
\]
Notice that, since  $g_i=0$ on $M\setminus B(x_i,2\eps)$, the map $h_i$ is well defined. 

Let us observe that each $h_i$ is   Lipschitz. Clearly, $h_i\vert_{M\setminus B(x_i,2\eps)}$ is Lipschitz. Moreover, since each $g_k$ is Lipschitz and 
\begin{equation}\label{Eq.prop:complete_compact.1}\eps\leq\sum_{k=1}^ng_k(x)\leq n
\ \text{ for all }\ x\in B,\end{equation} each  $h_i\vert_B$ is also Lipschitz. We are left to estimate $|h_i(x)-h_i(y)|$ for  $x\in B$ and $y\in M\setminus B(x_i,2\eps)$. For such $x$ and $y$, using \eqref{Eq.prop:complete_compact.1},
\begin{align*}
|h_i(x)-h_i(y)|&=h_i(x)-h_i(y)\\
&=h_i(x)-g_i(y)\\
&\leq \frac{g_i(x)}{\eps}-g_i(y)
\\
&=\frac{g_i(x)-g_i(y)}{\eps}\\
&\leq \frac{\Lip(g_i)}{\eps}d(x,y).
\end{align*}

Since each  $h_i$ is Lipschitz, we can extend it to a Lipschitz function on the whole $M$ with support contained in $B_i$; by abuse of notation, we still denote these extensions by $h_i$. As $\sum_{i=1}^nh_i=1$ on $B$, for any function $f\in\Lip_0(M)$, we have that $f-f\sum_{i=1}^nh_i=0$ on $B$. Therefore, since $B$ is a neighborhood of $A$ and $\varphi\in\mathfrak{S}(A)$,
\begin{equation}\label{Eq.Thm.14.Aug.26.1}\varphi=\sum_{i=1}^nW_{h_i}^*(\varphi),
\end{equation}
where $W_{h_i}$ are given by \eqref{Eq.Wh.formula} above.   

{\begin{Step2} $W_{h_i}^*(\varphi)\in\mathfrak{S}(B_i\cap A)$ for all $1\leq i\leq n$. \end{Step2}}
Fix  $i\in\{1,\ldots,n\}$. {By Proposition~\ref{prop:intersection}, in order to prove that $W_{h_i}^*(\varphi)\in\mathfrak{S}(B_i\cap A)$}, it is enough to show that $W_{h_i}^*(\varphi)$ belongs to both $\mathfrak{S}(B_i)$ and $\mathfrak S(A)$. For the former, suppose  $f\in\Lip_0(M)$ vanishes on a neighborhood of $ B_i$. Then $h_if=0$ on $M$ and 
\[\langle f,W^*_{h_i}\varphi\rangle=\langle h_if,\varphi\rangle=0.\] 
So,  $W_{h_i}^*(\varphi)\in\mathfrak{S}(B_i)$. If, on the other hand, $f\in\Lip_0(M)$ vanishes on a neighborhood of $A$, then so does $h_if$. Therefore, as   $\varphi\in\mathfrak{S}( A)$, we must have  $W_{h_i}^*(\varphi)\in\mathfrak{S}( A)$.

{\begin{Step3} $P(W_{h_i}^*(\varphi))\in\F(\gex(B_i\cap A)\cup\{0\})$ for all $1\leq i\leq n$. \end{Step3}}
Fix $i\in\{1,\ldots,n\}$. By Proposition \ref{prop:decomposition_S(A)}, there exist $\lambda_i\in\mathbb{R}$, $x_i\in B_i\cap A$, and $\psi_i\in\mathfrak{D}(B_i\cap A)$, such that
\[W_{h_i}^*(\varphi) = \lambda\delta(x_i)+\psi_i\]
(if $0\in B_i\cap A$ then $\mathfrak{D}(B_i\cap A) = \mathfrak{S}(B_i\cap A)$, and so we can just let $\lambda_i=0$, $x_i = 0$, and $\psi_i = P(W_{h_i}^*(\varphi))$ in this case). By Lemma \ref{lem:proper_compact_gex}, $P\psi_i\in\mathcal{F}(\gex(B_i\cap A)\cup\{0\})$, and, because $P\delta(x_i) = \delta(x_i)\in\mathcal{F}(\gex(B_i\cap A)\cup\{0\})$, $PW_{h_i}^*(\varphi)\in\mathcal{F}(\gex(B_i\cap A)\cup\{0\})$.

{\begin{Step4} $ P\varphi\in  \F(A\cup\{0\})$. \end{Step4}}
Since  $\diam(B_i\cap A)\leq 4\eps$ for each $i\in\{1,\ldots,n\}$, we have 
\[\gex(B_i\cap A)\sbs\overline{\gex_{\eps}(B_i\cap A)}\sbs B(B_i\cap A,6\eps)\sbs B(A,6\eps).\]
Therefore, {Step 3} gives that 
\[P(W_{h_i}^*(\varphi))\in\F\left(\overline{B(A,6\eps)\cup\{0\}}\right).\]
By \eqref{Eq.Thm.14.Aug.26.1}, we conclude that 
\[P\varphi=\sum_{i=1}^nP(W_{h_i}^*(\varphi))\in\F\left(\overline{B(A,6\eps)\cup\{0\}}\right).\]
 Since $\eps>0$ was arbitrary, Theorem \ref{Theorem.APPP.2.1}  finally implies that $P\varphi\in  \F(A\cup\{0\})$ 
as desired.

Now let $\varphi\in\mathfrak{D}(A)$ and fix $a\in A$. By Proposition \ref{change of base point proposition} \eqref{change of base point proposition i} and \eqref{change of base point proposition ii}, $T^*_{0,a}:\F(M)\to\F_a(M)$ is a surjective linear isometry mapping $\F(M)$ onto $\F_a(M)$. It furthermore maps $\F(A) = \overline{\mathrm{span}}\{\delta(x)-\delta(y):x,y\in A\}\sbs\F(M)$ onto $\F_a(A) = \overline{\mathrm{span}}\{\delta(x)-\delta(y):x,y\in \gex(A)\}\sbs\F_a(M)$. Recall also that, by Proposition \ref{how support sets interact with change of base point}, $T_{0,a}^*\varphi\in\mathfrak{D}_a(A)$. We may, therefore, without loss of generality, assume that $0\in A$, and therefore, by the first part of this proof, $P\varphi\in\mathcal{F}(A\cup\{0\}) = \mathcal{F}(A)$.
\end{proof}

\subsection{Hereditariness of $L$-embeddability for Lipschitz-free spaces}\label{Subsection.Hereditary}
In this subsection, we present two cases of $A\sbs M$ in which the $L$-embeddability of $\F(M)$ passes to $\F(A)$. Namely, this will be the case if $A$ is compact (see Theorem~\ref{Thm.LembPassesAcompact}) {or if $A$ is geodesically closed (see Theorem~\ref{thm:L_embed_hereditary_geodesic})}.

\begin{theorem}\label{Thm.LembPassesAcompact}
Let $(M,d,0)$ be a complete pointed metric space and $A\sbs M$ be a compact subset. If $P:\mathcal{F}(M)^{**}\to \mathcal{F}(M)$ is an $L$-projection, then $P(\mathcal{F}(A)^{**}) \sbs \mathcal{F}(A)$. In particular, if $\mathcal{F}(M)$ is $L$-embedded in its bidual, then $\mathcal{F}(A)$ is $L$-embedded in its bidual.     
\end{theorem}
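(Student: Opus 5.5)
The first claim reduces almost immediately to Theorem~\ref{thm:complete_compact}. Under the identification of Proposition~\ref{canonical identification}, $\F(A)^{**}$ is viewed as the subspace $R^*(\F(A)^{**})$ of $\F(M)^{**}$, where $R$ is taken with respect to some fixed base point $a\in A$ (if $A=\emptyset$ there is nothing to prove). By Remark~\ref{subset free space bidual containement in constant vanishers}, $\F(A)^{**}\sbs\mathfrak{D}(A)$. Since $A$ is compact and $P$ maps into $\F(M)$, Theorem~\ref{thm:complete_compact} gives $P(\mathfrak D(A))\sbs\F(A)$. Hence $P(\F(A)^{**})\sbs\F(A)$.

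For the second claim, let $Q:=P|_{\F(A)^{**}}$, viewed as an operator $\F(A)^{**}\to\F(A)^{**}$ through the identification. It maps into $\F(A)$ by the first part. It is a projection onto $\F(A)$ because $P$ fixes $\F(M)\supseteq\F(A)$, and $Q^2=Q$ since $Q$ takes values in $\F(A)$ where it acts as the identity. The $L$-condition $\|\psi\|=\|Q\psi\|+\|\psi-Q\psi\|$ is inherited from $P$, because $R^*$ is an isometric embedding by Proposition~\ref{canonical identification}: norms computed in $\F(A)^{**}$ agree with norms computed in $\F(M)^{**}$.

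One compatibility point needs checking: the canonical copy of $\F(A)$ inside $\F(A)^{**}$ must correspond, under $R^*$, to the canonical copy of $\F(A)$ inside $\F(M)\sbs\F(M)^{**}$. This follows from item~(2) of Proposition~\ref{canonical identification}, since $R^*(\delta(x)-\delta(y))=\delta(x)-\delta(y)$ and $R^*$ is continuous and linear. Consequently $Q$ is an $L$-projection from $\F(A)^{**}$ onto $\F(A)$.

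There is no real obstacle here. All the substance lies in Theorem~\ref{thm:complete_compact}, and the only point needing care is the bookkeeping of identifications and base points just described.
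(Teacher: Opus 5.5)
Your proposal is correct and follows the paper's own proof: combine $\F(A)^{**}\sbs\mathfrak{D}(A)$ with $P(\mathfrak{D}(A))\sbs\F(A)$ from Theorem~\ref{thm:complete_compact}, then observe that the restriction of an $L$-projection is again an $L$-projection. Your extra bookkeeping on $R^*$ and base points spells out what the paper leaves implicit; it is accurate but does not change the argument.
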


\begin{proof}
This is an immediate corollary of  Theorem~\ref{thm:complete_compact}. Indeed, this theorem says that $P(\mathfrak D(A))\sbs \F(A)$ and, as noticed in Remark \ref{subset free space bidual containement in constant vanishers}, $\F(A)^{**}\sbs \mathfrak{D}(A)$. Hence, $P\vert_{\F(A)^{**}}$ takes $\F(A)^{**}$ into $\F(A)$ and, being the restriction of an $L$-projection, it is also an $L$-projection. 
\end{proof}

The following lemma will be used in the proof of Theorem~\ref{thm:L_embed_hereditary_geodesic} to show that an $L$-projection of $\F(M)^{**}$ into functionals avoiding infinity necessarily annihilates functionals concentrated at infinity.

{
\begin{lemma}
\label{lem:infinity_norming}
Let $(M,d,0)$ be an unbounded complete pointed metric space and let $\varphi\in\F(M)^{**}$ be concentrated at infinity. Then, for every $r>0$ and $\varepsilon>0$, there exists an $h\in B_{\Lip_0(M)}$ such that $\supp(h)\sbs M\setminus B(0,r)$ and $\langle h,\varphi\rangle\geq \|\varphi\|-\varepsilon$.   
\end{lemma}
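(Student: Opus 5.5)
Fix $r>0$ and $\varepsilon>0$. The plan is to take a near-norming function for $\varphi$ and kill it near the base point by a cutoff. Since $\varphi$ is concentrated at infinity, pairing with a function supported in a bounded set gives zero, so the cut-off part costs nothing. The only issue is to keep the Lipschitz constant close to $1$. For that we use a slowly varying cutoff adapted to the dyadic scales.

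Pick $f\in B_{\Lip_0(M)}$ with $\langle f,\varphi\rangle\geq\|\varphi\|-\varepsilon/2$. Choose $N$ with $2^N\geq r$, and an integer $K$ large, to be fixed later. Define the logarithmic cutoff
\[
\chi(x)\coloneqq \min\Bigl\{1,\max\Bigl\{0,\tfrac{1}{K}\log_2\bigl(d(x,0)/2^{N}\bigr)\Bigr\}\Bigr\}.
\]
It vanishes on $B(0,2^N)$ and equals $1$ outside $B(0,2^{N+K})$, and at distance $t$ from $0$ its local Lipschitz constant is at most $C/(Kt)$. Since $f(0)=0$ we have $|f(x)|\leq d(x,0)$. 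Put $h_0\coloneqq \chi f$. Using
\[
|h_0(x)-h_0(y)|\leq |\chi(x)||f(x)-f(y)|+|f(y)||\chi(x)-\chi(y)|,
\]
the second term is bounded by $d(y,0)\cdot C d(x,y)/(K\min\{d(x,0),d(y,0)\})$ in the transition region. When $d(x,0)$ and $d(y,0)$ are comparable this is $O(1/K)\,d(x,y)$. When they are not comparable, $d(x,y)$ is itself comparable to $\max\{d(x,0),d(y,0)\}$, and the crude bound $|\chi|\leq1$ gives $|f(y)||\chi(x)-\chi(y)|\leq d(y,0)$. This should yield $\Lip(h_0)\leq 1+C'/K$, and we set $h\coloneqq h_0/\Lip(h_0)$. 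To guarantee $\supp(h)\sbs M\setminus B(0,r)$ rather than only vanishing on a ball, shift the inner radius slightly so that $h$ vanishes on $B(0,2^N)\supseteq \overline{B(0,r)}$ when $2^N>r$. The support is then contained in the closure of $\{d(\cdot,0)> 2^N\}$, which avoids $B(0,r)$.

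The main estimate I expect to need care is the bookkeeping for the non-comparable case above; if it fails, a cleaner alternative is to replace $\chi f$ by $\max\{\min\{f,\,\psi\},-\psi\}$ with $\psi(x)=\max\{0,d(x,0)-2^N\}$. This function is automatically $1$-Lipschitz, as a lattice combination of $1$-Lipschitz functions, and it vanishes on $B(0,2^N)$. It differs from $f$ by at most $2^N$ everywhere. But it does not agree with $f$ far out, so the logarithmic cutoff is still needed.

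Next compare $\langle h_0,\varphi\rangle$ with $\langle f,\varphi\rangle$. We have $f-h_0=(1-\chi)f$, which is supported in the ball $\overline{B(0,2^{N+K})}\subseteq B(0,2^{m})$ with $m\coloneqq N+K$. Hence $(1-\chi)f=W_{\daleth_{m}}((1-\chi)f)$, so $\langle (1-\chi)f,\varphi\rangle=\langle (1-\chi)f,W^*_{\daleth_m}\varphi\rangle=0$, because $\varphi$ is concentrated at infinity. Therefore $\langle h_0,\varphi\rangle=\langle f,\varphi\rangle\geq \|\varphi\|-\varepsilon/2$. Consequently
\[
\langle h,\varphi\rangle\geq (\|\varphi\|-\varepsilon/2)/(1+C'/K)\geq\|\varphi\|-\varepsilon
\]
for $K$ large enough. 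The case $\varphi=0$ is trivial with $h=0$, and unboundedness of $M$ is only used so that the construction is nonvacuous; if $M$ were bounded, $\varphi$ would be $0$ anyway.
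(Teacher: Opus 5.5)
Your proof is correct in outline and rests on the same key observation as the paper's: a function in $\Lip_0(M)$ that agrees with $f$ outside a bounded set pairs with $\varphi$ exactly as $f$ does, because the difference is fixed by $W_{\daleth_m}$ for large $m$. Where you differ is in how you kill $f$ near the base point.

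The paper prescribes nothing in the annulus. It sets $g=0$ on $B(0,r)$ and $g=f$ on $M\setminus B(0,R)$. It then checks only the cross pairs, via $|f(y)|/d(x,y)\le d(y,0)/(d(y,0)-r)<1+\delta$, and lets a Lipschitz extension fill in the rest. You instead write down an explicit function $\chi f$. This avoids the extension step but forces you to control the product-rule term $|f|\,|\chi(x)-\chi(y)|$. That is why your cutoff must vary on a logarithmic scale: a linear cutoff on $[r,R]$ gives a Lipschitz constant close to $2$, e.g. for $f=|\cdot|$ on $\R$. The paper's route is shorter; yours produces an explicit formula.

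The one step that does not work as written is the Lipschitz bookkeeping. Take a fixed comparability ratio $\lambda$.
\begin{itemize}
\item In the comparable case you get an $O(\lambda/K)$ bound.
\item In the non-comparable case the crude estimate gives only $d(y,0)/d(x,y)\le 1/(\lambda-1)$ when $y$ is the point nearer to $0$, and nothing small when $y$ is the farther one.
\end{itemize}
So the split does not produce $1+C'/K$. It can be rescued by letting $\lambda$ grow like $\sqrt{K}$, but no case split is needed at all.

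Write $h_0(x)-h_0(y)=\chi(x)(f(x)-f(y))+f(y)(\chi(x)-\chi(y))$. Since $x$ and $y$ may be interchanged, assume $0<d(y,0)\le d(x,0)$; if $y=0$ the second term vanishes. The clamp in $\chi$ is $1$-Lipschitz and $\ln u\le u-1$, so
\[
|f(y)|\,|\chi(x)-\chi(y)|\le \frac{d(y,0)}{K\ln 2}\ln\frac{d(x,0)}{d(y,0)}\le\frac{d(x,0)-d(y,0)}{K\ln 2}\le \frac{d(x,y)}{K\ln 2}.
\]
Hence $\Lip(h_0)\le 1+\frac{1}{K\ln 2}$. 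Normalize by this number, as the paper does with $1+\delta$, rather than by $\Lip(h_0)$, which could be $0$ or less than $1$.

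The remaining steps are fine. In particular, $2^N\ge r$ already suffices for the support condition: $\{h_0\neq 0\}\subseteq\{d(\cdot,0)>2^N\}$, and its closure lies inside $M\setminus B(0,r)$.
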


\begin{proof}
First, we observe that if $L>0$ is arbitrary and if $f,g\in\Lip_0(M)$ are such that $f\vert_{M\setminus {B(0,L)}}=g\vert_{M\setminus{B(0,L)}}$, then $\langle f,\varphi\rangle=\langle g,\varphi\rangle$. Indeed, choose $n\in\N$ so large that $\daleth_n\vert_{\overline{B(0,L)}}=1$. Then, since $\supp(f-g)\sbs \overline{B(0,L)}$, the fact that $\varphi$ is concentrated at infinity gives 
\[\langle f-g,\varphi\rangle=\langle (f-g)\daleth_n,\varphi\rangle=\langle f-g ,W_{\daleth_n}^*(\varphi)\rangle=0.\]

Fix  now $r>0$ and $\varepsilon>0$, and let $\delta \coloneqq \varepsilon/(1+\|\varphi\|)$. Pick an $R>r$ large enough so that 
\begin{equation}
\label{eq:choice_of_R}
\frac{t}{t-r}<1+\delta\ \text{ for every }\ t\geq R,
\end{equation}
and choose an $f\in B_{\Lip_0(M)}$ with $\langle f,\varphi\rangle\geq \|\varphi\|-\delta$.

Define a function $g:B(0,r)\cup (M\setminus B(0,R))\to\R$ by letting
\[ 
g(x) \coloneqq \begin{cases}
 0,& x\in B(0,r),\\
f(x),&x\in M\setminus B(0,R) .
\end{cases}
\]
Clearly, both $g\vert_{B(0,r)}$ and $g\vert_{M\setminus B(0,R)}$ are $1$-Lipschitz functions and if $x\in B(0,r)$ and $y\in M\setminus B(0,R)$, then by \eqref{eq:choice_of_R},
$$\frac{|g(x)-g(y)|}{d(x,y)}=\frac{|f(y)|}{d(x,y)}\leq\frac{d(y,0)}{d(x,y)}\leq \frac{d(y,0)}{d(y,0)-d(x,0)}\leq \frac{d(y,0)}{d(y,0)-r}<1+\delta.$$
Function $g$ can thus be extended to a $(1+\delta)$-Lipschitz function on $M$, which we will denote $g$ again. So, $g\in (1+\delta)B_{\Lip_0(M)}$ and, since $g\vert_{M\setminus{B(0,R)}}=f\vert_{M\setminus{B(0,R)}}$, the discussion in the beginning of the proof implies that $\langle g,\varphi\rangle=\langle f,\varphi\rangle$. Finally, let  $h \coloneqq (1+\delta)^{-1} g$. Then $h\in   B_{\Lip_0(M)}$,   \[\supp(h)\sbs\supp (g)\sbs M\setminus B(0,r)\] and, by our choice of $\delta$,  
\[\langle h,\varphi\rangle=\frac{1}{1+\delta}\langle g,\varphi\rangle\geq \frac{1}{1+\delta}(\|\varphi\|-\delta)\geq \|\varphi\|-\varepsilon.\]
 This completes the proof.
\end{proof}

We can now present the second case when $L$-embeddability is hereditary.
\begin{theorem}
\label{thm:L_embed_hereditary_geodesic}
Let $(M,d,0)$ be a complete pointed metric space and $A\sbs M$ be a geodesically closed subset. If $P:\mathcal{F}(M)^{**}\to \mathcal{F}(M)$ is an $L$-projection, then $P(\mathcal{F}(A)^{**}) \sbs \mathcal{F}(A)$. In particular, if $\mathcal{F}(M)$ is $L$-embedded in its bidual, then $\mathcal{F}(A)$ is $L$-embedded in its bidual. 
\end{theorem}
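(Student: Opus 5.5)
We will work with $\varphi\in\F(A)^{**}\sbs\mathfrak D(A)$ (Remark~\ref{subset free space bidual containement in constant vanishers}). The plan is to split $\varphi=\varphi_M+\varphi_\infty$, to show that $P$ kills $\varphi_\infty$, and to control $P\varphi_M$ via Corollary~\ref{cor:L_projection_shadow} and Lemma~\ref{Lemma.Aliaga.8}. Throughout we fix $a\in A$. By Proposition~\ref{change of base point proposition}, Proposition~\ref{how support sets interact with change of base point} and Remark~\ref{change of base point remarks}, passing to $T_{0,a}^*$ preserves $\F(M)$, $\F(A)$, optimal representations and avoidance of infinity. Conjugating $P$ by this isometry, we may therefore assume $0\in A$. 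Then $\mathfrak D(A)=\mathfrak S(A)$ and $\F(A\cup\{0\})=\F(A)$.

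\emph{Step 1: reduction to norm-attaining $\varphi$.} Since $P$ is bounded and $\F(A)$ is closed, it suffices to prove $P\varphi\in\F(A)$ for a dense set of $\varphi\in\F(A)^{**}$. The space $\F(A)^{**}$ is the dual of $\Lip_0(A)$, so by Bishop--Phelps the functionals attaining their norm on $B_{\Lip_0(A)}$ are dense in it. By Proposition~\ref{canonical identification}, $R^*$ is an isometry and $R$ maps the ball onto the ball. Hence such $\varphi$ attain their norm as elements of $\F(M)^{**}$ at some $h\in B_{\Lip_0(M)}$, and we fix such a $\varphi$.

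\emph{Step 2: $P\varphi_\infty=0$.} Let $m=P\varphi_\infty\in\F(M)$; then $m$ avoids infinity. For $\psi\in\F(M)^{**}$ concentrated at infinity and $m\in\F(M)$ we have $(\psi-m)_M=-m$ and $(\psi-m)_\infty=\psi$. Proposition~\ref{Prop.AP_measures.Cor.3.4} then gives $\|\psi-m\|=\|\psi\|+\|m\|$. Applying this with $\psi=\varphi_\infty$ in the $L$-projection identity yields
\[
\|\varphi_\infty\|=\|m\|+\|\varphi_\infty-m\|=2\|m\|+\|\varphi_\infty\|,
\]
so $m=0$. Lemma~\ref{lem:infinity_norming} is an alternative route if this identity needs more care. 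Consequently $P\varphi=P\varphi_M$.

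\emph{Step 3: $\varphi_M$ is admissible and conclusion.} Since $0\in A$, if $f$ vanishes near $A$ then so does $f\daleth_n$. Hence $W_{\daleth_n}^*\varphi\in\mathfrak S(A)$, and by $w^*$-closedness $\varphi_M\in\mathfrak S(A)=\mathfrak D(A)$. Moreover $\varphi_M$ avoids infinity. The function $h$ norms $\varphi_M$, because
\[
\|\varphi\|=\langle h,\varphi_M\rangle+\langle h,\varphi_\infty\rangle\le\|\varphi_M\|+\|\varphi_\infty\|=\|\varphi\|,
\]
which forces equality termwise. So $\varphi_M$ attains its norm, and Assumption~\ref{Assumption_Pavlos} holds by Proposition~\ref{prop:sufficient_for_assumption}. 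Corollary~\ref{cor:L_projection_shadow} then gives $\mathfrak p(\supp\nu)\sbs\gex^u(A)\times\gex^u(A)$ for $\nu\in\mathcal M_{op}(P\varphi_M)$. Lemma~\ref{Lemma.Aliaga.8} yields
\[
\supp(P\varphi_M)\sbs\gex^u(A)\cap M=\gex(A)=A,
\]
so $P\varphi=P\varphi_M\in\F(A\cup\{0\})=\F(A)$ by Proposition~\ref{prop.APPP.2.6}. The restriction of $P$ to $\F(A)^{**}$ is therefore an $L$-projection onto $\F(A)$.

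The main obstacle is that Corollary~\ref{cor:L_projection_shadow} requires Assumption~\ref{Assumption_Pavlos} and avoidance of infinity, neither of which is automatic for a noncompact $A$. Steps 1--3 handle this by combining Bishop--Phelps density with the $\ell_1$-splitting at infinity. The delicate points to verify are that the norming $h$ transfers to $\varphi_M$, and that the base-point change is compatible with $P$ and with the identification of $\F(A)$.
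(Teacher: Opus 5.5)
Your proof is correct. Its architecture matches the paper's proof: you normalize the base point into $A$, reduce to norm-attaining $\varphi$ by Bishop--Phelps, split $\varphi=\varphi_M+\varphi_\infty$, transfer the norming $h$ to $\varphi_M$, and then conclude via Corollary~\ref{cor:L_projection_shadow}, Lemma~\ref{Lemma.Aliaga.8} and $\gex(A)=A$.

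The one genuine difference is how you show $P\varphi_\infty=0$.

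\begin{itemize}
\item The paper works by hand. It uses Lemma~\ref{lem:infinity_norming} to produce a near-norming $h$ supported outside $B(0,2^{n+1})$. It then tests $\psi-W^*_{\daleth_n}(P\psi)$ against $h$, after approximating $P\psi$ by $W^*_{\daleth_n}(P\psi)$.
\item You instead apply Proposition~\ref{Prop.AP_measures.Cor.3.4} to $\psi-P\psi$. Its $M$-part is $-P\psi$, because elements of $\F(M)$ avoid infinity, and its $\infty$-part is $\psi$. Hence $\|\psi-P\psi\|=\|\psi\|+\|P\psi\|$, and the $L$-identity forces $P\psi=0$. This computation is correct.
\end{itemize}

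Your route is shorter. It makes Lemma~\ref{lem:infinity_norming} unnecessary and relies only on the $\ell_1$-splitting, which the paper already invokes a few lines later for the norm-attainment transfer. In fact you only need the inequality $\|\psi-m\|\ge\|\psi\|$ for $m$ avoiding infinity. That inequality is exactly what the paper's argument establishes up to $\varepsilon$, directly from the definitions and without quoting \cite[Corollary 3.4]{AP_measures}. So the paper's version is more self-contained, and yours is more economical.
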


\begin{proof}
Fix $a\in A$. Because the operator $T_{0,a}^*:\F(M)^{**}\to\F_a(M)^{**}$ is an onto linear isometry preserving $\F(M)$, $\F(A)^{**}$, and $\F(A)$, we may assume, without loss of generality, that $0 = a\in A$. In particular, $\mathfrak{D}(A) = \mathfrak{S}(A)$. We begin the proof by showing that $P\varphi=0$ whenever $\varphi\in\F(M)^{**}$ is concentrated at infinity. If $M$ is bounded, zero is the only element in  $\F(M)^{**}$ which is concentrated at infinity. Suppose then that $M$ is unbounded and let $\varphi\in\F(M)^{**}$ be concentrated at infinity. Fix an auxiliary  $\varepsilon>0$. As $P$ has image in $\F(M)$, $P\varphi$ avoids infinity. So, there exists an $n\in\N$ such that 
\begin{equation}
\label{eq:avoids_infinity}
\|P\varphi-W_{\daleth_n}^*(P\varphi)\|<\varepsilon/2.
\end{equation} 
Lemma~\ref{lem:infinity_norming} gives an  $h\in B_{\Lip_0(M)}$ such that $\supp(h)\sbs M\setminus B(0,2^{n+1})$ and   $\langle h,\varphi\rangle>\|\varphi\|-\varepsilon/2$. Therefore, using that $P$ is an $L$-projection and recalling that $\daleth_n$ vanishes outside $B(0,2^{n+1})$, we have by \eqref{eq:avoids_infinity},
\begin{align*}
\|P\varphi\|&=\|\varphi\|-\|(\Id-P)\varphi\|\\
&< \|\varphi\|-\|\varphi-W_{\daleth_n}^*(P\varphi)\|+\varepsilon/2\\
&<\langle h,\varphi\rangle +\varepsilon/2-\langle h,\varphi-W_{\daleth_n}^*(P\varphi)\rangle+\varepsilon/2\\
&=\langle h\daleth_n,P\varphi\rangle+\varepsilon\\
&=\langle 0, P\varphi\rangle +\varepsilon\\
&=\varepsilon.
\end{align*}
Since $\varepsilon>0$ was arbitrary, we get that $P\varphi=0$.

We move now to general elements of $\F(A)^{**}$. Notice that, by Bishop--Phelps theorem, the norm-attaining functionals in $\F(A)^{**}$ are dense in this space. Therefore, in order to show that $P(\F(A)^{**})\sbs \F(A)$, it is enough to show that  $P\varphi\in \F(A)$ for all norm-attaining functionals $\varphi\in \F(A)^{**}$. Fix such $\varphi$. By Proposition~\ref{Prop.AP_measures.Cor.3.4}, we can write $\varphi=\varphi_M+\varphi_\infty$, where 
\[\varphi_M=\lim_n W^*_{\daleth_n}(\varphi)\ \text{ and } \ \varphi_\infty=\varphi-\varphi_M.\]
So,  $\varphi_M$ avoids infinity and $\varphi_\infty$ is concentrated at infinity, and  
\begin{equation}
\label{eq:functional_decomposition}
\|\varphi\|=\|\varphi_M\|+\|\varphi_\infty\|.
\end{equation}
By the previous paragraph, $P\varphi_\infty=0$, and so we only need to show that  $P\varphi_M \in \F(A)$. 

If $f\in B_{\Lip_0(M)}$   is norming for $\varphi$, then \eqref{eq:functional_decomposition} gives
\[\|\varphi\|=\langle f, \varphi\rangle=\langle f, \varphi_M\rangle + \langle f, \varphi_\infty\rangle\leq \|\varphi_M\|+\|\varphi_\infty\|=\|\varphi\|.\] Since this forces $\|\varphi_M\|=\langle f,\varphi_M\rangle$,  $\varphi_M$ also attains its norm. Notice also that $\varphi_M$ is in $\mathfrak S(A)$. Indeed, if $f$ vanishes on a neighborhood of $A$, then so does $f\daleth_n$ for all $n$. Therefore, as $\varphi\in \mathfrak S(A)$ (see Remark \ref{subset free space bidual containement in constant vanishers}), the definition of $\varphi_M$ gives that 
\[\langle f,\varphi_M\rangle=\lim_n\langle f,W_{\daleth_n}^*(\varphi)\rangle =\lim_n\langle f\daleth_n,\varphi\rangle=0.\]
By Proposition~\ref{prop:sufficient_for_assumption},   Assumption~\ref{Assumption_Pavlos} is satisfied for $\varphi_M$ and we can apply 
Corollary~\ref{cor:L_projection_shadow} to obtain 
\[\mathfrak{p}(\supp(\nu))\sbs \gex^{u}(A)\times\gex^{u}(A)\ \text{ for all }\   \nu\in\mathcal{M}_{op}(P\varphi_M).\]
Fix an arbitrary $\nu\in\mathcal{M}_{op}(P\varphi_M)$. As $P(\varphi_M)$ is in $\F(M)$, it follows from Lemma \ref{Lemma.Aliaga.8}  that
\begin{align*}
\supp (P\varphi_M) &\sbs (\mathfrak{p}_1(\supp( \nu))\cup\mathfrak{p}_2(\supp (\nu)))\cap M\\
&\sbs \gex^{u}(A)\cap M\\
&=\gex(A).    
\end{align*}
Since    $\gex(A)=A$, Proposition \ref{prop.APPP.2.6} gives that  $P\varphi_M\in \F(A\cup\{0\}) = \F(A)$.
\end{proof}
}

\section{\texorpdfstring{$\F(X)$ is not $L$-embedded in $\F(X)^{**}$ if $\dim(X)\geq 2$}{is not -embedded in  if }} \label{SectionnonLembBanach2}
In this penultimate section, we show how the arguments above can be used in order to rule out $L$-embeddability of the Lipschitz-free spaces of Banach spaces with dimension at least $2$. Besides the tools developed in the previous sections, a version of Godefroy--Kalton's lifting theorem  (Lemma~\ref{lem:2Dembedding})  as well as a condition implying that a pair of vectors is isometrically equivalent to two vectors in the canonical basis of $\ell_1$ will be needed (see Lemma~\ref{lem:l1basis}).

 The following lemma can be seen as a version to our setting of Godefroy--Kalton's lifting theorem (see \cite[Theorem 3.1 and Proposition 4.7]{GK}).

\begin{lemma} \label{lem:2Dembedding}
Let $X$ be a finite-dimensional normed space. Then for every $x_0 \in S_X$, there exists a linear map $T: X \to \F(X)^{**}$ such that
\begin{enumerate}[label=(\roman*)]
    \item\label{lem:2Dembedding.item1} $T$ is an isometric embedding,
    \item\label{lem:2Dembedding.item2} $T(y) \in {\mathfrak{S}([0,1]x_0)}$ for all $y \in X$,
    \item\label{lem:2Dembedding.item3} $T(x_0) \in \F(X)$, and
    \item\label{lem:2Dembedding.item4} $\|T(y)+{m}\| \geq \dist(y,\R x_0)$ for all $y \in X$ and ${m} \in \F([0,1]x_0)$.
\end{enumerate}
\end{lemma}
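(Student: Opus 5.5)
The plan is to build $T$ as a weak$^*$ limit of rescaled derivatives: define $T(y)$ as the weak$^*$ cluster point of the difference quotients
\[
\frac{\delta(t_kx_0+s_ky)-\delta(t_kx_0)}{s_k}
\]
along suitable sequences. Concretely, I would take a sequence of probability densities on $[0,1]$ (for instance normalized Lebesgue measure on the segment) and set
\[
T_s(y)\coloneqq\int_0^1\frac{\delta(tx_0+sy)-\delta(tx_0)}{s}\,dt\in\F(X),\qquad s>0.
\]
Since $\|\delta(u)-\delta(v)\|=\|u-v\|$, we have $\|T_s(y)\|\le\|y\|$. Then $T(y)$ is a weak$^*$ limit point of $T_s(y)$ as $s\to0$ along an ultrafilter, so $T$ is linear in $y$. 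This works because $\langle f,T_s(y)\rangle$ is an averaged directional difference quotient of $f$, and in the limit we get a linear functional of $y$ on the Lipschitz functions where the derivative exists a.e.; to secure exact linearity I would fix one free ultrafilter $\mathcal U$ and take $\lim_{\mathcal U}$ of $\langle f,T_s(y)\rangle$ directly as the defining formula.

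Checking linearity is where the difficulty sits. Additivity needs $\delta(tx_0+s(y+z))-\delta(tx_0+sz)-\delta(tx_0+sy)+\delta(tx_0)$ to be $o(s)$ weak$^*$-on-average. I expect to handle this via the Rademacher/Gâteaux differentiability of Lipschitz $f$ on finite-dimensional $X$ at a.e. point of suitable translates. Averaging over a small ball of base points, $x=tx_0+\rho w$ with $w$ in a unit ball, makes the quotients converge for every $f$ by dominated convergence. This is the main obstacle, and the reason to average over a thin tube around the segment rather than just the segment. To keep \ref{lem:2Dembedding.item2}, I would let the tube radius $\rho\to0$ jointly with $s$ (take $s=\rho^2$). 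The tube then shrinks into every neighbourhood of $[0,1]x_0$, so any $f$ vanishing near the segment gives $0$ in the limit, which is item \ref{lem:2Dembedding.item2}.

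For \ref{lem:2Dembedding.item3}, when $y=x_0$ the inner integral telescopes: $\int_0^1(\delta((t+s)x_0)-\delta(tx_0))/s\,dt\to\delta(x_0)-\delta(0)$ in norm. Hence $T(x_0)=\delta(x_0)\in\F(X)$, with base point $0$ (the tube perturbation only adds a norm-small error).

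For the isometry \ref{lem:2Dembedding.item1} and the lower bound \ref{lem:2Dembedding.item4}, I would test against explicit functions. Take $y^*\in S_{X^*}$ with $y^*(x_0)=0$ and $y^*(y)=\dist(y,\R x_0)$; this exists by Hahn–Banach on $X/\R x_0$. Set $f=y^*$, which is linear and $1$-Lipschitz. Then $\langle f,T(y)\rangle=y^*(y)$, while $f$ vanishes on $[0,1]x_0$, so $\langle f,m\rangle=0$ for $m\in\F([0,1]x_0)$. This gives $\|T(y)+m\|\ge\dist(y,\R x_0)$. For the isometry, take $y^*$ norming $y$; then $\langle y^*,T(y)\rangle=\|y\|$, which together with $\|T_s(y)\|\le\|y\|$ gives $\|T(y)\|=\|y\|$.
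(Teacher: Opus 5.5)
Your construction is essentially the paper's. Averaging over $tx_0+\rho w$, $w\in B_X$, is convolution with the normalized indicator $\chi_\rho$ of $\rho B_X$. So for fixed $\rho$ the $s\to0$ limit of your quotients is $\int_0^1\partial_y(\chi_\rho*f)(tx_0)\,dt$, which is the paper's $T_n$ with the smooth mollifier $\phi_n$ replaced by $\chi_{1/n}$. Your checks of (i)--(iv) also match the paper's. Linear $\psi\in X^*$ pair with every quotient to give exactly $\psi(y)$ and annihilate $\F([0,1]x_0)$. The shrinking tube gives (ii). The telescoping for $y=x_0$, which even converges in norm, gives $T(x_0)=\delta(x_0)$.

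The step that does not work as written is linearity under the coupled limit $s=\rho^2$. Rademacher plus dominated convergence controls the quotients as $s\to0$ only for each \emph{fixed} $\rho$. When $s$ and $\rho$ go to $0$ together, the base points $tx_0+\rho w$ slide onto the segment, where $f$ need not be differentiable anywhere, so there is no pointwise limit to dominate. There are two easy repairs:
\begin{enumerate}
\item Iterate the limits. Define $T_\rho(y)$ as the $w^*$-limit as $s\to0$; it is $f\mapsto\frac{1}{\lambda(B_X)}\int_{B_X}\int_0^1 Df(tx_0+\rho w)(y)\,dt\,dw$, which is visibly linear in $y$. Then take $\lim_{\rho,\U}$.
\item Keep $s=\rho^2$ and bound the additive defect directly. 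Put $G(u)=f(u+sz)-f(u)$, so $|G|\le s\|z\|$ for $f\in B_{\Lip_0(X)}$. The defect $T(y+z)-T(y)-T(z)$ at level $(s,\rho)$, paired with $f$, equals $\frac1s\int_0^1\frac{1}{\lambda(B_X)}\big(\int_{B_X+(s/\rho)y}-\int_{B_X}\big)G(tx_0+\rho w)\,dw\,dt$. Hence it is at most $\|z\|\,\lambda\big(B_X\triangle(B_X+(s/\rho)y)\big)/\lambda(B_X)\le 2\dim(X)\,\|y\|\,\|z\|\,s/\rho$. For $s=\rho^2$ this is $O(\rho)$, uniformly over $f\in B_{\Lip_0(X)}$.
\end{enumerate}
This estimate, not dominated convergence, is where $s=o(\rho)$ is genuinely used. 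For instance, with $s=\rho$ and $f=|\psi|$, where $0\neq\psi\in X^*$ and $\psi(x_0)=0$, the pairing of your limit with $f$ is an even, not identically zero function of $y$, so linearity fails.

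The paper avoids the issue entirely by using a $C^\infty$ mollifier. Then $\partial_y(\phi_n*f)$ is defined everywhere and linear in $y$ pointwise, so no differentiability theorem and no scaling between two parameters is needed.
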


\begin{proof}
Let $x_0 \in S_X$. Before jumping into the details of the proof, let us give a brief overview of it. The idea is to map each $y \in X$ to an $X$-valued measure $\nu$ supported on $[0,1]x_0$ whose Radon--Nikodym derivative with respect to 1-dimensional Lebesgue measure is the constant vector $y$. Such a measure $\nu$ then acts on a Lipschitz function $f \in\Lip_0(X)$ via \[\langle f,\nu \rangle = \int_0^1 \partial_y f(sx_0) ds.\] The problem with making such a definition is that $\partial_y f(sx_0)$ need not be well defined, not even for almost every $s\in[0,1]$, and so a bit of care is required in making this approach work.  

We now start with the details. Let $\lambda$ denote a Lebesgue measure on $X$ and $(\phi_n: X \to \R)_{n\in\N}$ be a smooth, compactly supported approximate identity, i.e.,  each $\phi_n$ is in the class $C^\infty$, $\phi_n \geq 0$, $\int_{X} \phi_n d\lambda= 1$, and $\supp \phi_n \sbs B(0,1/n)$. For each $n\in\N$, define $T_n: X \to \F(X)^{**}$ by letting
\[\langle  f,T_n(y)  \rangle  \coloneqq  \int_0^1 \partial_y (\phi_n * f)(sx_0) ds ,\ \text{ for all } y\in X\ \text{ and all}\ f\in \Lip_0(X),\]
where in the formula above we are using the convolution 
\[(\phi_n * f)(z) \coloneqq \int_X \phi_n(z-x)f(x)d\lambda(x),\] and for a $C^\infty$ function $g: X \to \R$, the directional derivative $\partial_y g: X \to \R$ is given by \[\partial_y g(x)  \coloneqq  \lim_{h\to 0} \frac{g(x+hy)-g(x)}{h}.\]
It is straightforward that $T_n$ is a linear operator with norm at most $1$. Let now $\U$ be any nonprincipal ultrafilter on $\N$, and define $T: X \to \F(X)^{**}$ by
\[T(y)  \coloneqq  w^*\text{-}\lim_{n,\U} T_n(y)\ \text{ for all } y\in X.\]
Clearly, $T$ is an operator with $\|T\| \leq 1$. We now show items \ref{lem:2Dembedding.item1}--\ref{lem:2Dembedding.item4} hold for $T$.

  \ref{lem:2Dembedding.item1}: Let $y \in X$ and pick  $\psi \in S_{X^*}$ with $\psi(y) = \|y\|$, which is possible due to Hahn--Banach. Then,
\begin{align*}
    \|T(y)\| &\geq \langle \psi, T(y)  \rangle =  \lim_{n,\U}\int_0^1 \partial_y (\phi_n * \psi)(sx_0) ds = \int_0^1 \psi(y) ds = \|y\|.
\end{align*}
So, $T$ is an isometric embedding.

 \ref{lem:2Dembedding.item2}: The fact that $T(y) \in {\mathfrak{S}([0,1]x_0)}$, for all $y \in X$, follows from a  straightforward computation since $\supp \phi_n \sbs B(0,1/n)$ {and $[0,1]x_0$ is compact}.
 
  \ref{lem:2Dembedding.item3}:
Using the Fundamental Theorem of Calculus, we have
\[\langle f,T_n(x_0) \rangle = (\phi_n*f)(x_0)-(\phi_n*f)(0)\ \text{ for all }\  f \in \Lip_0(X)\ \text{ and all }\  n \in \N.\] But $ \phi_n*f$ converge pointwise to $f$ whenever $f$ is Lipschitz, so 
\[\langle f,T(x_0) \rangle = f(x_0)\ \text{ for every }\  f \in \Lip_0(X).\] Hence, $T(x_0) = \delta({x_0}) \in \F(X)$.

  \ref{lem:2Dembedding.item4}: Let $y\in X$ and, using Hahn--Banach once again, pick $\psi\in S_{X^ *}$ with   $\psi(x_0) = 0$ and $\psi(y) = \dist(y,\R x_0)$. {Then $\psi\vert_{[0,1]x_0}=0$ and by Proposition \ref{prop.APPP.2.6} we get that for every $m\in\F([0,1]x_0)$,
\begin{align*}\label{Eq.lem:2Dembedding.1}
    \|T(y)+{m}\| &\geq \langle \psi, T(y)+{m}  \rangle \\
    &=  \lim_{n,\U} \int_0^1 \partial_y (\phi_n * \psi)(sx_0) ds + \langle \psi, {m}  \rangle  \notag\\
    &= \int_0^1 \psi(y) ds + 0\notag\\
    &= \dist(y,\R x_0).\notag
\end{align*}}
\end{proof}

We will use the previous lemma together with the support-preservation property of $L$-projections given by Theorem~\ref{thm:complete_compact} in order to prove the following lemma.

\begin{lemma} \label{lem:l1basis}
Let $X$ be a Banach space such that  $\F(X)$ is $L$-embedded in $\F(X)^{**}$. If  $x,y\in S_X$ are such that $\dist(y,\R x) = 1$, then   $\{x,y\}$ is isometrically equivalent to the unit vector basis of $\ell_1^2$, i.e., 
\[ \|ax+by\|=|a|+|b|\ \text{ for all }\ a,b\in \R.\]
\end{lemma}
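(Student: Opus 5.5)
The plan is to reduce to the two-dimensional subspace $Y\coloneqq\spn\{x,y\}$, where Lemma~\ref{lem:2Dembedding} applies, while keeping the $L$-projection on the whole of $\F(X)^{**}$. The support-preservation result Theorem~\ref{thm:complete_compact} will then force the projection to kill the part of the embedding coming from $y$.

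\textbf{Setting up.} Fix an $L$-projection $P\colon\F(X)^{**}\to\F(X)$ and put $Q\coloneqq\Id-P$. Apply Lemma~\ref{lem:2Dembedding} to the finite-dimensional space $Y$ with $x_0\coloneqq x$. This gives an isometric embedding $T\colon Y\to\F(Y)^{**}$ with the following properties:
\begin{itemize}
\item $T(z)\in\mathfrak{S}_Y([0,1]x)$ for all $z\in Y$;
\item $T(x)=\delta(x)$;
\item $\|T(y)+m\|\geq\dist(y,\R x)=1$ for all $m\in\F([0,1]x)$.
\end{itemize}
Since $0\in Y$, the canonical map $R^*$ of Proposition~\ref{canonical identification} identifies $\F(Y)^{**}$ isometrically with a subspace of $\F(X)^{**}$ and fixes $\F(Y)$ and $\F([0,1]x)$ pointwise, so I regard $T$ as taking values in $\F(X)^{**}$.

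\textbf{Transferring the support condition to $X$.} I need $T(z)\in\mathfrak{S}([0,1]x)$ computed in $X$. Take $f\in\Lip_0(X)$ vanishing on a neighborhood of $[0,1]x$ in $X$. Its restriction $f|_Y$ vanishes on a neighborhood of $[0,1]x$ in $Y$, and $\langle f,R^*\psi\rangle=\langle f|_Y,\psi\rangle$ for $\psi\in\F(Y)^{**}$. Hence the pairing vanishes, which gives the inclusion.

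\textbf{Using the projection.} Since $[0,1]x$ is compact and contains the base point, Theorem~\ref{thm:complete_compact} yields
\[PT(z)\in\F([0,1]x)\quad\text{for all } z\in Y.\]
By the third property of $T$ we have $QT(y)=T(y)-PT(y)$ with $PT(y)\in\F([0,1]x)$, so $\|QT(y)\|\geq1$. On the other hand the $L$-property gives
\[1=\|y\|=\|T(y)\|=\|PT(y)\|+\|QT(y)\|.\]
Together these force $\|QT(y)\|=1$ and $PT(y)=0$. Also $T(x)=\delta(x)\in\F(X)$, so $PT(x)=T(x)$ and $QT(x)=0$.

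\textbf{Conclusion.} For $a,b\in\R$, put $z=ax+by$. Linearity of $T$ gives $PT(z)=a\delta(x)$ and $QT(z)=bQT(y)$. Since $T$ is an isometry and $P$ is an $L$-projection,
\[\|ax+by\|=\|T(z)\|=\|PT(z)\|+\|QT(z)\|=|a|\,\|\delta(x)\|+|b|\,\|QT(y)\|=|a|+|b|.\]
This is exactly the isometric $\ell_1^2$ relation.

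The only delicate point is the support-transfer step, namely checking that the functionals produced by Lemma~\ref{lem:2Dembedding} inside $\F(Y)^{**}$ still lie in $\mathfrak{S}([0,1]x)$ when viewed in $\F(X)^{**}$. I expect this to follow from the restriction argument above, since McShane extension is not needed in that direction. The real weight of the argument is carried by Theorem~\ref{thm:complete_compact}, which is available.
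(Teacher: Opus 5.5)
Your proposal is correct and follows the paper's proof essentially step by step. You pass to $Y=\spn\{x,y\}$ and use Theorem~\ref{thm:complete_compact} to get $PT(y)\in\F([0,1]x)$. You then combine Lemma~\ref{lem:2Dembedding}\ref{lem:2Dembedding.item4} with the $L$-property to force $PT(y)=0$, and finally split $T(ax+by)$ using the $L$-projection. Your explicit restriction argument for $\mathfrak{S}_Y([0,1]x)\sbs\mathfrak{S}_X([0,1]x)$ is a welcome detail that the paper only asserts.
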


\begin{proof}
Fix an   $L$-projection $P: \F(X)^{**}\to\F(X)$,  let $x,y\in S_X$ be as in the statement, and let $Y  \coloneqq  \spn\{x,y\}$. Pick an operator $T: Y \to \F(Y)^{**}$  satisfying the conclusion of
Lemma~\ref{lem:2Dembedding} with $x_0 = x$.  Then, by Lemma~\ref{lem:2Dembedding}\ref{lem:2Dembedding.item2}, $T(y) \in \mathfrak{S}_Y([0,1]x) $. Therefore, noticing that  $\mathfrak{S}_Y([0,1]x)$ is contained in  $ \mathfrak{S}_X([0,1]x) $,  Theorem~\ref{thm:complete_compact}  gives that $P(T(y)) \in \F([0,1]x)$. Then, by Lemma~\ref{lem:2Dembedding}\ref{lem:2Dembedding.item4}, we have
\begin{align*}
    \|P(T(y))\| + 1 &= \|P(T(y))\| + \dist(y,\R x) \\
    &\leq \|P(T(y))\| + \|T(y) - P(T(y))\| \\
    &= \|T(y)\| \\
    &= \|y\| \\
    &= 1,
\end{align*}
which   shows that $P(T(y)) = 0$. Hence, {by Lemma~\ref{lem:2Dembedding}\ref{lem:2Dembedding.item1},\ref{lem:2Dembedding.item3},}  for any $a,b \in \R$, it follows that
\begin{align*}
    \|ax+by\| &= \|T(ax+by)\| \\
    &= \|aP(T(x)) + bP(T(y))\| + \|aT(x)+bT(y) - aP(T(x)) - bP(T(y))\| \\
    &= \|aT(x)\| + \|bT(y)\| \\
    &= |a|+|b|.
\end{align*}
This completes the proof.
\end{proof}

 We now prove our main result.

 \begin{theorem}
Let $X$ be a Banach space with $\dim (X) \geq 2$. Then $\F(X)$ is not $L$-embedded in $\F(X)^{**}$.\label{Thm.NonLEmb.Banach.dim.2} 
\end{theorem}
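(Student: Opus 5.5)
We argue by contradiction: assume $\dim(X)\geq 2$ and that $\F(X)$ is $L$-embedded in $\F(X)^{**}$. The plan is to use Lemma~\ref{lem:l1basis} to force a rigid $\ell_1$-structure on a two-dimensional subspace, and then to find a pair of unit vectors in that subspace where the rigidity visibly fails.

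First fix a two-dimensional subspace $Y\sbs X$. Every pair $x,y\in S_Y$ with $\dist(y,\R x)=1$ is also such a pair in $X$. By Lemma~\ref{lem:l1basis}, each such pair satisfies $\|ax+by\|=|a|+|b|$ for all $a,b\in\R$.

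Next we produce such pairs. Fix any $x\in S_Y$. By Hahn--Banach, choose $\psi\in S_{Y^*}$ with $\psi(x)=0$. Since $Y$ is finite-dimensional, $\psi$ attains its norm at some $y\in S_Y$ with $\psi(y)=1$. Then $\dist(y,\R x)\geq \psi(y)=1$, and $\dist(y,\R x)\leq\|y\|=1$, so $\dist(y,\R x)=1$. Hence every $x\in S_Y$ has a partner $y$ for which $\{x,y\}$ spans an isometric copy of $\ell_1^2$, which means $Y\equiv\ell_1^2$ with basis $\{x,y\}$.

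Since $x\in S_Y$ was arbitrary, it remains to show that this cannot hold for every choice of $x$. Identify $Y$ with $\ell_1^2$ through the first such pair, and now take $x=(e_1+e_2)/2$, which lies in $S_Y$. Every $y\in S_Y$ satisfies $\|x+y\|\leq \|x\|+\|y\|$, but we need equality, $\|x+y\|=\|x\|+\|y\|=2$, together with $\|x-y\|=2$. In $\ell_1^2$, the equality $\|u+v\|=\|u\|+\|v\|$ holds exactly when $u$ and $v$ have coordinatewise the same sign. Here $x$ has both coordinates strictly positive, so $\|x+y\|=2$ forces both coordinates of $y$ to be $\geq0$. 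Likewise $\|x-y\|=2$ forces both coordinates of $-y$ to be $\geq 0$. Together these give $y=0$, contradicting $\|y\|=1$.

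The work is essentially all done by Lemma~\ref{lem:l1basis}. The only delicate points are checking that the partner vector $y$ exists for every $x$, which holds because $Y$ is finite-dimensional and functionals attain their norms, and carrying out the short sign computation in $\ell_1^2$.
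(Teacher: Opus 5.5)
Your proof is correct and takes essentially the paper's route: Lemma~\ref{lem:l1basis} gives an isometric $\ell_1^2$ pair $\{x,y\}$, and a second application at the midpoint $(x+y)/2$ yields the contradiction. The only difference is that the paper uses the explicit partner $(x-y)/2$ (noting that $\{x',y'\}$ is isometrically $\ell_\infty^2$), whereas you take an arbitrary Hahn--Banach partner and rule it out via the equality case of the triangle inequality in $\ell_1^2$; both arguments work.
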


\begin{proof}
Assume towards a contradiction that $\F(X)$ is $L$-embedded in $\F(X)^{**}$. As $\dim(X)\geq 2$, we can find  $x,y \in S_X$ with $\dist(y,\R x) = 1$ -- such $y$ can be found for any $x$ by restricting to a two-dimensional subspace of $X$ and applying Riesz lemma and a compactness argument. By Lemma~\ref{lem:l1basis}, $\{x,y\}$ is isometrically equivalent to the unit vector basis of $\ell_1^2$. Using this, a simple calculation shows that, for
\[x'  \coloneqq  \frac{x+y}{2} \ \text{ and }\  y'  \coloneqq  \frac{x-y}{2},\]
$\{x', y'\}$ is    isometrically equivalent to the unit vector basis of $\ell_\infty^2$. In particular, we must have that $x',y' \in S_X$ and $\dist(y',\R x') = 1$. However,  using Lemma~\ref{lem:l1basis} again, this implies that $\{x',y'\}$ is also isometrically equivalent to the unit vector basis of $\ell_1^2$; a contradiction, since the unit vector basis of $\ell_1^2$ is not isometrically equivalent to the unit vector basis of $\ell_\infty^2$.
\end{proof}

\section{\texorpdfstring{Low-distortion ball embeddings preclude $L$-embeddability}{Low-distortion ball embeddings preclude -embeddability}}
We refine the core arguments from \S\ref{SectionnonLembBanach2} to generalize Theorem \ref{Thm.NonLEmb.Banach.dim.2} for metric spaces containing a biLipshitz copy of the unit ball of a 2-dimensional normed space with distortion less than a universal constant $C_0>1$. As an application, we conclude that all reversible Finsler manifolds, in particular all Riemannian manifolds, of dimension at least two, with their induced metrics, have non-$L$-embedded Lipschitz free spaces.
 
  \begin{remark}
In Lemma  \ref{lem:2Dembedding}, the operator $T$ has image in $\mathcal{F}(B_X)^{**}$. This can be seen as follows. For each $n\in\mathbb{N}$, define $\tilde T_n:X\to\mathcal{F}(B_X)^{**}$ given by
 \[\langle f, \tilde T_n(y)\rangle = \int_{0}^{1-1/2n}\partial_y(\phi_n*f)(sx_0)ds.\]
 Because $\phi_n$ is supported in $(1/n)B_X$, on a sufficiently small neighbourhood of the set $[0,(1-1/2n)x_0]$, the function $\phi_n*f$ only depends on $f|_{B_X}$, therefore, $T_ny\in \mathcal{F}(B_X)^{**}$. It is easy to see that $\lim_{n,\mathcal{U}}\langle f, T_ny\rangle  = \lim_{n,\mathcal{U}}\langle f|_{B_X},\rangle \tilde T_ny\rangle$, for all $y\in X$ and $f\in \mathrm{Lip}_0(X)$. This will be used in the following proof.
  \end{remark}
  
\begin{lemma} \label{distorted.lem:l1basis}
Let $M$ be a metric space such that $\F(M)$ is $L$-embedded in $\F(M)^{**}$. Let $X$ be a normed space such that there exists a $C$-biLipschitz copy of $B_X$ in $M$, for some $C<2$. Let $x,y\in S_X$ such that $\dist(y,\R x) > C-1$, and denote $\theta = \dist(y,\R x)$. Then $\{x,y\}$ is $C/(1+\theta-C)$-equivalent to the unit vector basis of $\ell_1^2$. More precisely, for all $a,b\in\R$
\begin{equation}
\label{distorted.lem:l1basis-equation}
\frac{1+\theta-C}{C} \big(|a|+|b|\big)\leq \|ax+by\| \leq |a|+|b|.
\end{equation}
\end{lemma}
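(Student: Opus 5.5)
We would follow the proof of Lemma~\ref{lem:l1basis}, carried out inside $M$ through the biLipschitz copy of the ball. Let $\iota\colon B_X\to M$ be the embedding, normalized (after rescaling the metric of $M$) so that $\|u-v\|\le d(\iota u,\iota v)\le C\|u-v\|$. Only $Y\coloneqq\spn\{x,y\}$ matters, so we assume $\dim X=2$. The upper bound in \eqref{distorted.lem:l1basis-equation} is just the triangle inequality, so all the work is in the lower bound.

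First we transport the operator $T$. By Lemma~\ref{lem:2Dembedding} with $x_0=x$, and the preceding Remark, $T$ takes values in $\F(B_X)^{**}$. Let $\hat\iota\colon\F(B_X)\to\F(M)$ be the linearization of $\iota$ (base points $0$ and $\iota(0)$). It has norm at most $C$, and since $\iota^{-1}$ is $1$-Lipschitz, $\|\hat\iota m\|\ge\|m\|$. Set $S\coloneqq\hat\iota^{**}T\colon X\to\F(M)^{**}$. Then we have the following.
\begin{enumerate}
\item $\|u\|\le\|Su\|\le C\|u\|$.
\item $Su\in\mathfrak S(\iota([0,1]x))$, because $f\circ\iota$ vanishes near $[0,1]x$ whenever $f$ vanishes near the compact set $\iota([0,1]x)$.
\item $Sx=\delta(\iota x)\in\F(M)$.
\item $\|Sy+m\|\ge\theta$ for every $m\in\F(\iota([0,1]x))$.
\end{enumerate}
For (iv) we test against $\psi\circ\iota^{-1}$, extended by McShane from $\iota(B_X)$ to $M$. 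Here $\psi\in S_{X^*}$ satisfies $\psi(x)=0$ and $\psi(y)=\theta$; the extension is $1$-Lipschitz and vanishes on $\iota([0,1]x)$.

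Next we apply the $L$-projection. Let $P\colon\F(M)^{**}\to\F(M)$ be an $L$-projection. Since $\iota([0,1]x)$ is compact, Theorem~\ref{thm:complete_compact} gives $P(Sy)\in\F(\iota([0,1]x)\cup\{\iota(0)\})=\F(\iota([0,1]x))$. Using (iv) and (i),
\[
\|PSy\|+\theta\le\|PSy\|+\|Sy-PSy\|=\|Sy\|\le C,
\]
so $\|PSy\|\le C-\theta$. On the other hand $PSx=Sx$ by (iii). For $a,b\in\R$, the $L$-property then gives
\begin{align*}
C\|ax+by\|\ge\|S(ax+by)\|&=\|aSx+bPSy\|+|b|\,\|Sy-PSy\|\\
&\ge |a|\,\|Sx\|-|b|(C-\theta)+|b|\bigl(\|Sy\|-(C-\theta)\bigr).
\end{align*}
Since $\|Sx\|\ge1$ and $\|Sy\|\ge1$, this yields
\[
C\|ax+by\|\ge |a|+|b|\bigl(1-2(C-\theta)\bigr).
\]

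The main obstacle is sharpening this bookkeeping to reach exactly $(1+\theta-C)(|a|+|b|)$. The crude estimate above loses $C-\theta$ twice on the $b$-term and nothing on the $a$-term. The natural fix is to avoid subtracting $\|PSy\|$ via the triangle inequality and instead lower-bound $\|S(ax+by)-PS(ax+by)\|=|b|\,\|Sy-PSy\|\ge|b|\theta$ directly. We would then bound the remaining piece $\|aSx+bPSy\|$ by a second functional test: a McShane extension of a norming functional for $ax+by$, composed with $\iota^{-1}$, paired against $aSx$, with the $bPSy$ term controlled through $\|PSy\|\le C-\theta$. Balancing these two lower bounds by symmetrizing the roles of $a$ and $b$ should give
\[
C\|ax+by\|\ge(1+\theta-C)(|a|+|b|).
\]
This step is where the care is needed. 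If the balancing fails to give the exact constant, the fallback is that the structural part of the argument (transporting $T$, then applying Theorem~\ref{thm:complete_compact}) still yields an $\ell_1^2$-equivalence constant tending to $1$ as $C\to1$ and $\theta\to1$. That is what the subsequent application needs.
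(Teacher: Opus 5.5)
Your transport of $T$ through $\iota$, the use of Theorem~\ref{thm:complete_compact} on the compact set $\iota([0,1]x)$, the bound $\|PSy\|\le C-\theta$, and the $L$-splitting of $S(ax+by)$ are exactly the paper's argument, with $A=1$ and $B=C$. The gap is the final step. The inequality \eqref{distorted.lem:l1basis-equation} is never proved, and your proposed ``balancing'' cannot prove it when $\theta<1$.

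At that point the argument only knows:
\begin{itemize}
\item $PSx=Sx$, $\|Sx\|\ge1$ and $1\le\|Sy\|\le C$;
\item $\|PSy\|\le C-\theta$, $\|Sy-PSy\|\ge\theta$ and $PSy\in\F(\iota([0,1]x))$;
\item $\|S(ax+by)\|\le C\|ax+by\|$.
\end{itemize}
All of these are compatible with $\|Sx\|=1$, $PSy=-(1-\theta)Sx$ and $\|Sy-PSy\|=\theta$; note that $Sx=\delta(\iota x)\in\F(\iota([0,1]x))$. Take $a=1-\theta$ and $b=1$. Then the projected part $aSx+bPSy$ is $0$, so no second test functional can extract anything from it, and $\|S(ax+by)\|=\theta$. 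For $C=1$, however, the target $(1+\theta-C)(|a|+|b|)=\theta(2-\theta)$ is strictly larger than $\theta$ whenever $0<\theta<1$, and the same happens for $C$ slightly above $1$. So a rearrangement of these estimates cannot close the gap; genuinely new input would be needed.

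For context, the paper follows your route exactly and reaches $(1+\theta-C)/C$ only through an arithmetic slip. It bounds the two pieces by $B^{-1}\bigl(|a|\|JTx\|-|b|(B-A^{-1}\theta)\bigr)$ and $B^{-1}\bigl(|b|\|J^{**}Ty\|-|b|(B-A^{-1}\theta)\bigr)$, but in the next line subtracts $|b|\bigl(1-(AB)^{-1}\theta\bigr)$ once instead of twice. Carried out correctly, that chain gives precisely your crude estimate $C\|ax+by\|\ge|a|+(1-2(C-\theta))|b|$.

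The first half of your fix is sound and is a real improvement. Bounding $\|Sy-PSy\|\ge\theta$ directly by (iv) gives
\[
C\|ax+by\|\ge |a|-(C-\theta)|b|+\theta|b| = |a|+(2\theta-C)|b|\ge (2\theta-C)(|a|+|b|),
\]
which agrees with the stated bound when $\theta=1$. Use this weaker lemma in the second application in Theorem~\ref{low-distortion.Thm.NonLEmb.Banach.dim.2}, where $\theta\ge(2-C)/C$. It gives the constant $(4-2C-C^2)/C^2$ in place of $2/C^2-1$, which still tends to $1$ as $C\to1$. So the theorem survives with a smaller $C_0$, as your fallback anticipates. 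As a proof of the lemma as stated, however, your proposal is incomplete.
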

  
\begin{proof}
 Fix a map $j:B_X\to M$ such that, for some $A,B>0$ with $AB\leq C$, we have that for all $x,y\in B_X$,
\[A^{-1}\|x-y\|\leq d(j(x),j(y)) \leq B\|x-y\|.\]
Translating the base point in $M$ does not change the isometric structure of $\mathcal{F}(M)$. We may therefore assume, without loss of generality, that $j(0_X) = 0_M$. Fix an $L$-projection $P: \F(M)^{**}\to\F(M)$ and,  let $x,y\in S_X$ be as in the statement, and let $Y  \coloneqq  \spn\{x,y\}$. Consider the linear extension of the Lipschitz map $j|_{B_Y}:B_Y\to j(B_Y)\sbs M$, $J:\mathcal{F}(B_Y) \to \mathcal{F}(j(B_Y))$, which is an onto isomorphism and satisfies $\|J^{-1}\|\leq A$, $\|J\|\leq B$. Pick an operator $T: Y \to \F(B_Y)^{**}$  satisfying the conclusion of Lemma~\ref{lem:2Dembedding} with $x_0 = x$. Then, by Lemma~\ref{lem:2Dembedding}\ref{lem:2Dembedding.item2}, $J^{**}T(y) \in \mathfrak{S}_{j(B_Y)}\big(j([0,1]x)\big) $. Noticing that  $\mathfrak{S}_{j(B_Y)}\big(j([0,1]x)\big) $ is contained in  $ \mathfrak{S}_M\big(j([0,1]x)\big)  $,  Theorem~\ref{thm:complete_compact}  gives that $PJ^{**}T(y) \in \F\big(j([0,1]x)\big)$. Consequently, $J^{-1}PJ^{**}T(y) \in \F([0,1]x)$. Then, by Lemma~\ref{lem:2Dembedding}\ref{lem:2Dembedding.item4}, we have
\begin{align*}
    \|PJ^{**}T(y)\| +  A^{-1}\theta &\leq \|PJ^{**}T(y)\| + A^{-1}\dist(y,\R x) \\
    &\leq \|PJ^{**}T(y)\| + A^{-1}\|T(y) - J^{-1}PJ^{**}T(y)\| \\
    &\leq \|PJ^{**}T(y)\| + \|J^{**}T(y) - PJ^{**}T(y)\| \\
    &= \|J^{**}T(y)\| \leq B\|y\| = B,
\end{align*}
which   shows that $\|PJ^{**}T(y)\| \leq B-A^{-1}\theta$. By Lemma~\ref{lem:2Dembedding}\ref{lem:2Dembedding.item3} we have $JTx\in \mathcal{F}(M)$. Hence, {by Lemma~\ref{lem:2Dembedding}\ref{lem:2Dembedding.item1},}  for any $a,b \in \R$, it follows that
\begin{align*}
    \|ax+by\| &= \|T(ax+by)\| \geq B^{-1}\|J^{**}T(ax+by)\| = B^{-1}\|aJTx + bJ^{**}Ty\|\\
    &= B^{-1}\|aPJT(x) + bPJ^{**}T(y)\|\\
    & + B^{-1}\big\|\big(aJT(x) - aPJT(x)\big) +\big(bJ^{**}T(y) - bPJ^{**}T(y)\big)\big\| \\
    & = B^{-1}\|aJT(x) + bPJ^{**}T(y)\| + B^{-1}\|bJ^{**}T(y) - bPJ^{**}T(y)\|\\
    &\geq B^{-1}\Big(|a|\|JT(x)\| - |b|(B-A^{-1}\theta) \Big) + B^{-1}\Big(|b|\|J^{**}T(y)\| - |b|(B-A^{-1}\theta)\Big) \\
    &\geq (AB)^{-1}\big(|a|\|Tx\|+|b|\|Ty\|\big) - |b|\big((1-(AB)^{-1}\theta\big)\\
    &\geq (AB)^{-1}(|a|+|b|) - \big((1-(AB)^{-1}\theta\big)(|a|+|b|) = ((1+\theta)C^{-1}-1\big)(|a|+|b|).
\end{align*}
This completes the proof.
\end{proof}

 \begin{theorem}\label{low-distortion.Thm.NonLEmb.Banach.dim.2}
There exists a universal constant $C_0 >1$ with the following property. Let $M$ be a metric space and $X$ be a normed space of dimension at least two. If $M$ contains a $C$-biLipschitz copy of $B_X$ with $C<C_0$, then $\F(M)$ is not $L$-embedded in $\F(M)^{**}$.
\end{theorem}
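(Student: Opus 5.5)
Suppose, towards a contradiction, that $\F(M)$ is $L$-embedded and that $M$ contains a $C$-biLipschitz copy of $B_X$. The approach is the quantitative analogue of the proof of Theorem~\ref{Thm.NonLEmb.Banach.dim.2}, using Lemma~\ref{distorted.lem:l1basis} in place of Lemma~\ref{lem:l1basis}. Since only a two-dimensional subspace will be used, and a $C$-biLipschitz copy of $B_X$ contains one of $B_Y$ for any subspace $Y$, we first replace $X$ by a two-dimensional subspace $Y$.

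\textbf{Step 1: a near-$\ell_1^2$ pair.} Fix $x\in S_Y$ and, by Riesz's lemma and compactness, choose $y\in S_Y$ with $\dist(y,\R x)=1$, so $\theta=1$. For $C<2$ (we will take $C_0\le 2$), Lemma~\ref{distorted.lem:l1basis} gives
\[
\frac{2-C}{C}\bigl(|a|+|b|\bigr)\le\|ax+by\|\le |a|+|b|\quad\text{for all } a,b\in\R .
\]
Write $\kappa=(2-C)/C$; then $\kappa\to1$ as $C\to1$.

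\textbf{Step 2: the rotated pair.} Set $x'=(x+y)/\|x+y\|$ and $y'=(x-y)/\|x-y\|$. By Step 1, $\|x\pm y\|\in[2\kappa,2]$. Moreover, for all $a,b$,
\[
\|a(x+y)+b(x-y)\|=\|(a+b)x+(a-b)y\|\in\bigl[2\kappa\max(|a|,|b|),\,2\max(|a|,|b|)\bigr].
\]
Hence $\{x',y'\}$ is $\kappa^{-2}$-equivalent to the unit vector basis of $\ell_\infty^2$. In particular,
\[
\theta'=\dist(y',\R x')\ge\kappa^{2},
\]
since $\|tx'+y'\|\ge\kappa^2\max(|t|,1)\ge\kappa^2$ up to normalisation; we only need the crude bound $\theta'\ge\kappa^2$.

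\textbf{Step 3: contradiction.} For $C$ close to $1$ we have $\theta'\ge\kappa^2>C-1$, so Lemma~\ref{distorted.lem:l1basis} applies to $x',y'$. It gives $\|ax'+by'\|\ge\frac{1+\theta'-C}{C}(|a|+|b|)$. Take $a=1$, $b=1$. The upper bound from the $\ell_\infty$-structure is
\[
\|x'+y'\|=\Bigl\|\tfrac{x+y}{\|x+y\|}+\tfrac{x-y}{\|x-y\|}\Bigr\|\le \tfrac{1}{2\kappa}\,\|2x\|+\text{(error)}\approx 1 .
\]
More carefully, $\|x'+y'\|\le \kappa^{-1}\cdot$(the $\ell_\infty$ bound), which is $\le \kappa^{-1}$ up to the normalisation factors. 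The lower bound from Lemma~\ref{distorted.lem:l1basis} is $2(1+\kappa^2-C)/C$. As $C\to1$, the lower bound tends to $2$ while the upper bound tends to $1$. So there is a universal $C_0\in(1,2)$, obtained by solving the resulting explicit inequality, for which $C<C_0$ forces $2(1+\kappa^2-C)/C>\kappa^{-1}$ (with normalisations), a contradiction.

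The main obstacle is bookkeeping the normalisations of $x',y'$ so that the upper estimate of $\|x'+y'\|$ is clean. I would compute directly: $\|x'+y'\|\le \frac{1}{2\kappa}\|(x+y)+(x-y)\|+\bigl|\frac1{\|x+y\|}-\frac1{\|x-y\|}\bigr|\,\|x-y\|$, and bound this by $\kappa^{-1}+(\kappa^{-1}-1)$. All quantities are continuous in $C$ with limits $1$ versus $2$, so existence of $C_0$ follows without computing it explicitly.
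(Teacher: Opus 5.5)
Your proposal is correct and follows the paper's proof. You apply Lemma~\ref{distorted.lem:l1basis} with $\theta=1$, pass to the normalized pair $(x\pm y)/\|x\pm y\|$ (the paper's $x'',y''$), which is nearly an $\ell_\infty^2$ pair, apply the lemma a second time, and compare the lower bound for $\|x'+y'\|$ (close to $2$) with the upper bound (close to $1$).

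The differences are only cosmetic. The paper uses the sharper bound $\theta'\ge(2-C)/C$ where you use $\kappa^2$, and it writes down an explicit inequality defining $C_0$, whereas you get $C_0$ by continuity at $C=1$. Also, your Step 2 already gives $\|x'+y'\|\le\kappa^{-1}$ directly, so the normalisation bookkeeping in Step 3 is unnecessary.
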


\begin{proof}
Choose the minimal $C_0 \in (1,\sqrt{2})$ such that $\Big(\frac{2}{C_0^2}-1\Big)2 \leq \frac{C_0}{2-C_0}$. Such a value is easily seen to exist by the Intermediate Value Theorem. Let $X$ be a normed space of dimension at least $2$, let $C < C_0$, and let $M$ be a metric space containing a $C$-biLipschitz copy of $B_X$. Assume towards a contradiction that $\F(M)$ is $L$-embedded in $\F(M)^{**}$. Because $\dim(X)\geq 2$, we can find  $x,y \in S_X$ with $\dist(y,\R x) = 1$. By Lemma~\ref{distorted.lem:l1basis}, \eqref{distorted.lem:l1basis-equation} holds for $\theta = 1$. Consider \[x'  \coloneqq  \frac{x+y}{2} \ \text{ and }\  y'  \coloneqq  \frac{x-y}{2}.\]
For all $\mu,\lambda\in\mathbb{R}$, we have by the triangle inequality that
\begin{equation}
\label{low-distortion.Thm.NonLEmb.Banach.dim.2-equation}
\|\mu y'+\lambda x'\| \leq \frac{1}{2}\Big(|\mu+\lambda|+|\mu-\lambda|\Big) = \max\{|\mu|,|\lambda|\},
\end{equation}
and we have by \eqref{distorted.lem:l1basis-equation} that
\begin{equation} \label{lower-bound}
\|\mu y'+\lambda x'\| \geq \frac{2-C}{C}\frac{1}{2}\Big(|\mu+\lambda|+|\mu-\lambda|\Big) = \frac{2-C}{C}\max\{|\mu|,|\lambda|\}.
\end{equation}

Define $y'' = y'/\|y'\|$ and $x'' = x'/\|x'\|$, which are in $S_X$ and satisfy
\[\mathrm{dist}(y'',\mathbb{R} x'') =  \|y'\|^{-1}\mathrm{dist}(y',\mathbb{R} x')\overset{\eqref{low-distortion.Thm.NonLEmb.Banach.dim.2-equation},\eqref{lower-bound}}{\geq}(2-C)/C = :\theta.\]
The condition $C<\sqrt 2$ already implies $\theta > C-1$. By Lemma~\ref{distorted.lem:l1basis}, the pair $\{x'',y''\}$ satisfies \eqref{distorted.lem:l1basis-equation} with constant $(1+\theta-C)/C = 2/C^2-1$. In particular, for all $\lambda,\mu\in\mathbb{R}$, we have
\[\|\mu x''+\lambda y''\| \geq \Big(\frac{2}{C^2}-1\Big)(|\mu|+|\lambda|).\]
On the other hand, by \eqref{low-distortion.Thm.NonLEmb.Banach.dim.2-equation},
\[\|\mu x''+\lambda y''\| \leq \max\{|\mu|\|x'\|^{-1},|\lambda|\|y'\|^{-1}\}\leq \frac{C}{2-C}\max\{|\mu|,|\lambda|\}.\]
Taking $\mu = \lambda = 1$, we obtain
\[\Big(\frac{2}{C^2}-1\Big)2 \leq \frac{C}{2-C}.\]
This contradicts the definition of $C_0$, since $C < C_0$.

\end{proof}

By \cite[Theorem 3.5]{luvcic:pasqualetto} (see also \cite{deng:hou}), if $M$ is a reversible Finsler manifold, $x\in M$, and $X = T_xM$ is the normed space associated with $x$, then, for every $\varepsilon>0$, there is a subset of $M$ containing $x$ that is $(1+\varepsilon)$-biLipschitzly isomorphic to $B_X$. By Theorem \ref{low-distortion.Thm.NonLEmb.Banach.dim.2}, we conclude the following.

\begin{corollary}
\label{finsler.lipschitz.free.space.non.L-embeddability}
Let $M$ be a reversible Finsler manifold of dimension at least two. Then $\mathcal{F}(M)$ is not $L$-embedded in $\mathcal{F}(M)^{**}$.
\end{corollary}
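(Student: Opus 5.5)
The plan is to derive the corollary directly from Theorem~\ref{low-distortion.Thm.NonLEmb.Banach.dim.2}, with the cited result \cite[Theorem 3.5]{luvcic:pasqualetto} supplying the biLipschitz balls. Let $n=\dim M\geq 2$, fix a point $x\in M$, and set $X=T_xM$. Because $M$ is a reversible Finsler manifold, the Finsler structure at $x$ is an honest (symmetric) norm on $X$. Thus $X$ is an $n$-dimensional normed space with $\dim X\geq 2$, exactly the kind of space Theorem~\ref{low-distortion.Thm.NonLEmb.Banach.dim.2} requires.

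Let $C_0>1$ be the universal constant from Theorem~\ref{low-distortion.Thm.NonLEmb.Banach.dim.2}, and pick $\varepsilon>0$ with $1+\varepsilon<C_0$. By \cite[Theorem 3.5]{luvcic:pasqualetto}, there is a subset of $M$ containing $x$ that is $(1+\varepsilon)$-biLipschitz equivalent to $B_X$, where $M$ carries its induced (path-length) metric. So $M$ contains a $C$-biLipschitz copy of $B_X$ with $C=1+\varepsilon<C_0$, and Theorem~\ref{low-distortion.Thm.NonLEmb.Banach.dim.2} then shows that $\F(M)$ is not $L$-embedded in $\F(M)^{**}$.

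There is one technical point to check. The proof of Theorem~\ref{low-distortion.Thm.NonLEmb.Banach.dim.2} goes through Lemma~\ref{distorted.lem:l1basis}, which invokes Theorem~\ref{thm:complete_compact}, and that theorem is stated for complete metric spaces. If $M$ is not complete, I would replace it by its completion $\widehat M$: the free spaces agree, $\F(M)=\F(\widehat M)$ isometrically, and the biLipschitz copy of $B_X$ inside $M$ is still a biLipschitz copy inside $\widehat M$ with the same constant. Alternatively, since the copy is the image of the compact set $B_X$, it is already compact, and that may suffice without completing.

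I expect the only real obstacle to be confirming that the cited result gives distortion arbitrarily close to $1$ for the intrinsic metric of a reversible Finsler manifold, rather than some other metric on $M$. Once that is granted, the argument is a direct application of Theorem~\ref{low-distortion.Thm.NonLEmb.Banach.dim.2}.
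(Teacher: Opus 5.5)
Your proposal is correct and matches the paper's argument: fix $x\in M$, take $X=T_xM$ (a normed space of dimension at least two by reversibility), use \cite[Theorem 3.5]{luvcic:pasqualetto} to obtain a $(1+\varepsilon)$-biLipschitz copy of $B_X$ in $M$ with $1+\varepsilon<C_0$, and apply Theorem~\ref{low-distortion.Thm.NonLEmb.Banach.dim.2}. Your completeness remark is sound and is a point the paper leaves implicit: passing to the completion works because $\F(M)\equiv\F(\widehat M)$ and the biLipschitz copy persists. The alternative you float, that compactness of the copy might make completeness unnecessary, is not justified, since the proof of Theorem~\ref{thm:complete_compact} uses completeness of the ambient space.
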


The above holds in particular for Riemannian manifolds of dimension at least two, since they are reversible Finsler manifolds.

\begin{acknowledgement*} This paper was written under the auspices of the
American Institute of Mathematics (AIM) SQuaREs program as part of the
``Nonlinear geometry of Banach spaces''  SQuaRE project.
\end{acknowledgement*}

\bibliographystyle{alpha}
\bibliography{bibliography}

\end{document}